\documentclass[11pt,centertags,oneside]{amsart}

\usepackage{amsmath,amssymb,amsthm,amscd,amstext}
\usepackage[mathscr]{eucal}
\usepackage{mathrsfs}
\usepackage{mathtools}
\usepackage{hyperref}
\usepackage{charter}
\usepackage{enumitem}
\usepackage{xcolor}
\usepackage{fancyhdr}
\usepackage[a4paper,width=16.5cm,top=3cm,bottom=3cm]{geometry}

\numberwithin{equation}{section}

\allowdisplaybreaks
\emergencystretch=\maxdimen
\newtheorem{thm}{Theorem}[section]
\newtheorem{prop}[thm]{Proposition}
\newtheorem{lm}[thm]{Lemma}
\newtheorem{cor}[thm]{Corollary}
\theoremstyle{definition}
\newtheorem{dfn}[thm]{Definition}
\theoremstyle{remark}
\newtheorem{rmk}[thm]{Remark}

\newcommand{\C}{\mathbb{C}}
\newcommand{\N}{\mathbb{N}}
\newcommand{\Q}{\mathbb{Q}}
\newcommand{\Z}{\mathbb{Z}}
\newcommand{\R}{\mathbb{R}}
\newcommand{\Ocal}{\mathcal{O}}
\newcommand{\1}{\mathbf{1}}
\newcommand{\abs}[1]{\left|#1\right|}
\newcommand{\set}[1]{\left\{#1\right\}}
\newcommand{\ip}[1]{\left\langle #1\right\rangle}
\newcommand{\diam}{\operatorname{diam}}
\newcommand{\dist}{\operatorname{dist}}

\definecolor{dgreen}{RGB}{0,100,0}

\title{Borel--Bernstein and Hirst-type Theorems\\
for Nearest-Integer Complex Continued Fractions\\
over Euclidean Imaginary Quadratic Fields}

\author{Kangrae Park}
\address{Department of Mathematical Sciences, Seoul National University}
\email{kangrae.park@snu.ac.kr}

\date{\today}

\begin{document}
\maketitle

\begin{abstract}
For each $d\in\{1,2,3,7,11\}$, let $T_d$ be the nearest-integer complex continued fraction map associated with the Euclidean ring $\Ocal_d$ of imaginary quadratic integers, and let $(a_n)$ denote its digit sequence. We establish two metric results for this five-system family. First, for every sequence $(u_n)_{n\ge1}$ with $u_n\ge1$, the set of points for which $\abs{a_n}\ge u_n$ for infinitely many $n$ has full or zero normalized Lebesgue measure according as $\sum_{n=1}^\infty u_n^{-2}$ diverges or converges. This yields a unified Borel--Bernstein theorem extending the Hurwitz case $d=1$ to all five Euclidean imaginary quadratic fields. Second, if $S\subset\Ocal_d$ is infinite and $\tau(S)$ denotes the convergence exponent of $S$, then the digit-restricted set $F_d(S)=\{z:\ a_n(z)\in S\ \forall n,\ \abs{a_n(z)}\to\infty\}$ satisfies $\dim_H F_d(S)=\tau(S)/2$. More generally, for any cutoff function $f(n)\to\infty$, the restricted set $F_d(S,f)$ is either empty or has the same Hausdorff dimension $\tau(S)/2$. The proof combines quantitative ergodic properties of the nearest-integer systems with a large-digit conformal iterated-function subsystem that is $2$-decaying. We also obtain consequences for sparse patterns, shrinking targets, and almost-sure L\'evy and Khinchine-type laws.
\end{abstract}

\section{Introduction}

The classical Borel--Bernstein theorem gives a sharp zero--one law for the growth of digits
in the regular continued fraction expansion on $(0,1)$.
In the complex setting, a Borel--Bernstein analogue for Hurwitz complex continued fractions
(the case $d=1$) was proved by Gonz\'alez Robert~\cite{GR21}.
For each $d\in\{1,2,3,7,11\}$, let $\Ocal_d$ denote the ring of integers of $\Q(\sqrt{-d})$.

The purpose of this paper is twofold:
\begin{enumerate}[label=(\roman*), leftmargin=3.0em]
\item to prove a Borel--Bernstein theorem for nearest-integer complex continued fractions over
the five Euclidean imaginary quadratic integer rings $\Ocal_d$ with $d\in\{1,2,3,7,11\}$;
\item to prove, in the same five-system setting, a Hirst-type Hausdorff dimension formula
for digit-restricted sets by passing to a large-digit $2$-decaying conformal IFS.
\end{enumerate}

\subsection{Background}

Historically, one can consider the eight nearest-integer algorithms
\[
\mathrm{CF}(1,R),\ \mathrm{CF}(2,R),\ \mathrm{CF}(3,H),\ \mathrm{CF}(3,R),\ \mathrm{CF}(7,H),\ \mathrm{CF}(7,R),\ \mathrm{CF}(11,H),\ \mathrm{CF}(11,R),
\]
where $R$ and $H$ denote the rectangle-type and hexagon-type fundamental domains.
The present paper works with the five-system subfamily
\[
\mathrm{CF}(1,R),\ \mathrm{CF}(2,R),\ \mathrm{CF}(3,H),\ \mathrm{CF}(7,H),\ \mathrm{CF}(11,H),
\]
obtained by choosing one nearest-integer algorithm for each Euclidean imaginary quadratic field.

The earliest sources go back to Hurwitz~\cite{Hurwitz1887}, whose work covers the seed cases
$\mathrm{CF}(1,R)$ and $\mathrm{CF}(3,H)$, and to Lakein~\cite{Lakein1973}, who extended the picture to the remaining Euclidean imaginary quadratic cases.
A different family of complex continued fraction maps was later introduced by Kaneiwa--Shiokawa--Tamura~\cite{KST1976}.
From the ergodic side, Schmidt~\cite{SchmidtCCF1982} gave an early ergodic treatment of complex continued fractions.
In the Gaussian case $\Ocal_1=\Z[i]$, Tanaka~\cite{TanakaCCF1985} studied a complex continued fraction transformation from an ergodic viewpoint, and Nakada~\cite{NakadaGaussian1988} developed the Gaussian-field theory further via natural extensions and metrical applications.
For book-level background on fibred systems and multidimensional/complex continued fractions, see Schweiger~\cite[Chapters~3 and~20]{SchweigerMCF2000}.

For the full eight-system nearest-integer family, Ei--Nakada--Natsui~\cite{ENN23} established the basic ergodic theory,
including an absolutely continuous invariant probability measure and a natural extension.
For the five systems used in the present paper, Kim--Lee--Lim~\cite{KLL25} developed the transfer-operator and spectral framework,
while Baumgartner--Pollicott~\cite{BP25} proved the quantitative dynamical inputs used here:
an invariant probability measure comparable to Lebesgue measure, a sharp $t^{-2}$ tail for the first digit,
and exponential mixing for cylinder sets.

The papers used most directly in the proofs are Baumgartner--Pollicott~\cite{BP25},
for invariant measure, tail asymptotics, and cylinder mixing in the five-system setting,
Kim--Lee--Lim~\cite{KLL25} for the geometric input used in the large-digit IFS construction,
and Nakajima--Takahasi~\cite{NT25} for the abstract dimension theorem for $2$-decaying conformal IFSs.
Gonz\'alez Robert~\cite{GR21} should be viewed here as the Hurwitz-case predecessor and main point of comparison for the Borel--Bernstein problem.

From the viewpoint of regular continued fractions, these complex systems are not merely formal analogues.
They retain the same inversion-and-subtraction mechanism, but the digit alphabet is now the planar lattice $\Ocal_d$ rather than $\N$.
This changes the counting and tail behavior from a one-dimensional problem to a genuinely two-dimensional one,
and it links continued fraction dynamics to the geometry of Euclidean imaginary quadratic fields and, through recent work, to cusp excursions on Bianchi orbifolds.

On the real side, our first theorem belongs to the classical metric theory going back to
Borel and Bernstein~\cite{Borel1898,Bernstein1924,Bugeaud2004}.
Our second theorem lies in the line from Jarn\'{\i}k and Good to Hirst, Cusick, Wang--Wu, and Takahasi
\cite{Jarnik1928,Good1941,Hirst1973,Cusick1978,WangWuHirst,Takahasi2023};
see also \cite{WangWuLarge,HuWu2009,TangZhong2016} for related analogues and extensions in other settings.

On the complex side, modern metrical and dimensional results for Hurwitz complex continued fractions were obtained by
Gonz\'alez Robert~\cite{GR21,GRGood2020}, Nakajima--Takahasi~\cite{NT25}, and Bugeaud--Gonz\'alez Robert--Hussain~\cite{BGRH2025};
see also \cite{Nogueira2001} for a related multidimensional direction.
Here \cite{GRGood2020} gives the Good-type theorem for the Hurwitz expansion, namely that the set of points with $\abs{a_n(z)}\to\infty$ has Hausdorff dimension $1$,
while \cite{NT25} confirms Hirst's conjecture and the restricted slowly growing digit result for the Hurwitz case.
The present paper, however, treats nearest-integer continued fractions uniformly over the five Euclidean imaginary quadratic fields above.

These ingredients leave a natural gap.
On the Borel--Bernstein side, the Hurwitz case $d=1$ was treated in \cite{GR21}, but a unified theorem for the five nearest-integer systems above does not seem to have been written down.
On the Hausdorff-dimension side, the Hurwitz case was treated in \cite{NT25}, and the abstract $2$-decaying conformal IFS theorem of \cite{NT25} is already available,
but it does not appear to have been specialized to this five-system nearest-integer complex continued fraction setting.
The aim of the present paper is to fill these two gaps in a uniform notation.

We also record several further consequences, grouped into mixing-based statements
and large-digit IFS-based statements.

\subsection{Main results}

We fix the notation used in the statements below. For each \(d\in\{1,2,3,7,11\}\), let \(I_d\) denote the nearest-integer fundamental domain, let
\[
T_d:I_d\to I_d
\]
be the associated continued fraction map, and let \(\mathcal N_d\subset I_d\) be the exceptional set of measure zero on which the digit sequence is not globally well-defined. For \(z\in I_d\setminus\mathcal N_d\), write \((a_n(z))_{n\ge1}\) for its digit sequence in \(\mathcal O_d\). Let \(m_d\) denote the normalized Lebesgue measure on \(I_d\), and let \(\mu_d\) denote the absolutely continuous invariant probability measure for \(T_d\). For a finite word \(\mathbf b=(b_1,\dots,b_k)\in\mathcal O_d^k\), write
\[
\ip{\mathbf b}:=\set{z\in I_d\setminus\mathcal N_d:\ a_1(z)=b_1,\dots,a_k(z)=b_k}
\]
for the corresponding cylinder.

Except for Theorem~\ref{thm:BB}, which is naturally stated for $m_d$, the almost-everywhere consequences below will be written with respect to $\mu_d$; by Lemma~\ref{lm:mu_md}, the corresponding $m_d$-formulations are equivalent.

Fix \(d\in\{1,2,3,7,11\}\), and let \((u_n)_{n\ge1}\) satisfy \(u_n\ge1\). Define
\[
E_d(u):=\Bigl\{z\in I_d\setminus\mathcal N_d:\ \abs{a_n(z)}\ge u_n\ \text{for infinitely many }n\in\N\Bigr\}.
\]
We begin with a Borel--Bernstein zero--one law for \(E_d(u)\).

\begin{thm}[Borel--Bernstein]\label{thm:BB}
Fix $d\in\{1,2,3,7,11\}$.
Let $(u_n)_{n\ge1}$ satisfy $u_n\ge 1$.
Then
\[
m_d(E_d(u))=
\begin{cases}
0,& \displaystyle\sum_{n=1}^{\infty}\frac{1}{u_n^{2}}<\infty,\\[1.2ex]
1,& \displaystyle\sum_{n=1}^{\infty}\frac{1}{u_n^{2}}=\infty.
\end{cases}
\]
\end{thm}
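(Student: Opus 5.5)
The plan is to follow the classical Borel--Cantelli scheme, using the inputs from Baumgartner--Pollicott~\cite{BP25} that the paper says it will record: $\mu_d$ is comparable to $m_d$ (Lemma~\ref{lm:mu_md}), $\mu_d(\abs{a_1}\ge t)\asymp t^{-2}$, and exponential mixing for cylinder sets. Because $\mu_d$ and $m_d$ are equivalent with bounded densities, it is enough to prove the dichotomy for $\mu_d$. Set $A_n:=\{z:\abs{a_n(z)}\ge u_n\}=T_d^{-(n-1)}\{\abs{a_1}\ge u_n\}$. By $T_d$-invariance of $\mu_d$ and the tail estimate,
\[
\mu_d(A_n)=\mu_d(\abs{a_1}\ge u_n)\asymp u_n^{-2}
\]
uniformly in $n$. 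Since $u_n\ge1$, the event $\{\abs{a_1}\ge u_n\}$ may be all of $I_d$ for small $u_n$, but the two-sided bound still holds with constants depending only on $d$.

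\emph{Convergence case.} If $\sum u_n^{-2}<\infty$, then $\sum_n\mu_d(A_n)<\infty$, and the first Borel--Cantelli lemma gives $\mu_d(\limsup A_n)=0$. Hence $m_d(E_d(u))=0$.

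\emph{Divergence case.} We use a quasi-independence version of the second Borel--Cantelli lemma, either Erd\H{o}s--R\'enyi/Kochen--Stone or Sprindzhuk's quantitative form. It suffices to show
\[
\mu_d(A_m\cap A_n)\le C\,\mu_d(A_m)\mu_d(A_n)+C\theta^{\,n-m}\mu_d(A_m)\quad(m<n)
\]
for some $\theta<1$. Summing over $m<n\le N$, the error term is $O\bigl(\sum_{m\le N}\mu_d(A_m)\bigr)$, which is $o\bigl((\sum_{m\le N}\mu_d(A_m))^2\bigr)$ because the sum diverges. Kochen--Stone then gives $\mu_d(\limsup A_n)\ge 1/C>0$. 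To upgrade this to full measure we need a zero--one law. One route is that $\limsup A_n$ is not literally invariant, but it is a tail event, and exactness of $T_d$ (available from the natural extension/spectral theory in \cite{ENN23,KLL25}) makes the tail $\sigma$-algebra trivial mod $\mu_d$. A more robust alternative, avoiding exactness, is to run the same estimate conditioned on an arbitrary cylinder $\ip{\mathbf b}$: show $\mu_d(\limsup A_n\mid\ip{\mathbf b})\ge c>0$ uniformly in $\mathbf b$, and conclude by the Lebesgue density theorem, since cylinders generate the Borel $\sigma$-algebra and have bounded distortion.

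\emph{Main obstacle: the mixing estimate.} The set $A_n$ is $T_d^{-(n-1)}$ of a union of first-level cylinders $\ip{b}$ with $\abs b\ge u_n$. The mixing input from \cite{BP25} is presumably stated as $\bigl|\mu_d(\ip{\mathbf b}\cap T_d^{-k-\ell}E)-\mu_d(\ip{\mathbf b})\mu_d(E)\bigr|\le C\theta^k\mu_d(\ip{\mathbf b})\mu_d(E)$ for cylinders $\ip{\mathbf b}$ of length $\ell$ and measurable or cylinder-union sets $E$ (a $\psi$-mixing type statement). Writing $A_m$ as a union over words of length $m$ whose $m$-th digit is large, and applying this with gap $n-m$, gives the displayed bound. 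It also gives the conditioned version needed above. If \cite{BP25} only provides a weaker bound, with the error relative to $\mu_d(E)$ but not to $\mu_d(\ip{\mathbf b})$, the fallback is a direct argument. By bounded distortion of the inverse branches of $T_d^m$ (from \cite{KLL25}), one has $\mu_d(\ip{\mathbf b}\cap T_d^{-m}E)\asymp\mu_d(\ip{\mathbf b})\mu_d(E)$ for $E$ contained in the range of the branch. This yields quasi-independence with constant $C$ directly, with no need for an exponential error, and the difficulty reduces to handling the finitely many non-full cylinders near $\partial I_d$, where the range of the branch is a proper subset of $I_d$. I expect that treating these finite-range (non-full) branches uniformly over the five systems is the step that needs the most care.
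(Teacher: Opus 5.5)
Your proposal is correct, and its skeleton matches the paper's. You reduce to $\mu_d$ by Lemma~\ref{lm:mu_md}, get $\mu_d(A_n)\asymp u_n^{-2}$ from invariance and Corollary~\ref{cor:t2}, use the first Borel--Cantelli lemma in the convergence case, and use exponential mixing for quasi-independence in the divergence case.

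The real difference is how you close the divergence case. The mixing input you anticipated is exactly Proposition~\ref{prop:mix-cyl}, whose error is relative to the product $\mu_d(\ip{\mathbf b})\mu_d(\ip{\mathbf c})$. The paper applies it through Lemma~\ref{lm:mix-threshold} and invariance, needing only first-level cylinders, to get $\abs{\mu_d(A_i\cap A_j)-p_ip_j}\le\alpha\rho^{\,j-i-1}p_ip_j$. This gives $\mathrm{Var}_{\mu_d}(S_N)\ll s_N$. Chebyshev along a subsequence with $s_{N_j}\ge j^2$ (Lemma~\ref{lm:var_crit}) then gives $S_N\to\infty$ almost everywhere, so full measure follows at once.

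Put differently, your displayed bound holds with coefficient $1+\alpha\rho^{\,n-m-1}$ in front of $\mu_d(A_m)\mu_d(A_n)$, not a constant $C>1$. So even Kochen--Stone already gives $\limsup_N s_N^2/(s_N^2+O(s_N))=1$, and your zero--one step is superfluous.

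What your two-step route buys is robustness. It would still work with only a multiplicative bound $\mu_d(A_m\cap A_n)\le C\mu_d(A_m)\mu_d(A_n)$, for instance from bounded distortion. The price is a separate zero--one law, and both versions you suggest are sound. Tail triviality actually follows from Proposition~\ref{prop:mix-cyl} itself: extend it to arbitrary measurable targets by approximation, which works because the bound is uniform in the target's length and $\mu_d$ is invariant, then let the gap tend to infinity. For the cylinder-conditioned version, the cleanest tool is the martingale convergence theorem for the cylinder filtration, since it needs no geometric regularity of cylinders. Your fallback paragraph about non-full branches is not needed for this paper.
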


Next, for an infinite set $S\subset\Ocal_d$, write $S_*:=S\setminus\{0\}$ and define
\[
\tau(S):=\inf\Bigl\{t>0:\ \sum_{\alpha\in S_*}\abs{\alpha}^{-t}<\infty\Bigr\}.
\]
Thus $\tau(S)=\tau(S_*)$; the value of $\tau(S)$ is insensitive to the presence of $0\in S$. Define
\[
F_d(S):=\Bigl\{z\in I_d\setminus\mathcal N_d:\ a_n(z)\in S\ \forall n,\ \lim_{n\to\infty}\abs{a_n(z)}=\infty\Bigr\}.
\]
Given a function $f:\N\to[1,\infty)$ with $f(n)\to\infty$, set
\[
F_d(S,f):=\Bigl\{z\in F_d(S):\ \abs{a_n(z)}\le f(n)\ \forall n\Bigr\}.
\]

\begin{thm}[Hirst-type dimension formula]\label{thm:HirstMain}
Fix $d\in\{1,2,3,7,11\}$ and the nearest-integer complex continued fraction system $(I_d,T_d)$.
Let $S\subset\Ocal_d$ be infinite and let $f:\N\to[1,\infty)$ satisfy $f(n)\to\infty$.
Then
\[
\dim_H F_d(S)=\frac{\tau(S)}{2}.
\]
Moreover, if $F_d(S,f)\neq \varnothing$ then
\[
\dim_H F_d(S,f)=\frac{\tau(S)}{2}.
\]
\end{thm}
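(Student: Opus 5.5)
The upper bound $\dim_H F_d(S)\le\tau(S)/2$ will come from a direct covering argument by cylinders. The lower bound will come from the abstract dimension theorem of Nakajima--Takahasi~\cite{NT25} for $2$-decaying conformal IFSs, applied to a large-digit subsystem built from the geometry in~\cite{KLL25}. Since $F_d(S,f)\subset F_d(S)$, it suffices to prove two things: $\dim_H F_d(S)\le \tau(S)/2$, and $\dim_H F_d(S,f)\ge\tau(S)/2$ whenever $F_d(S,f)\ne\varnothing$. The latter then also gives the lower bound for $F_d(S)$, because we may choose $f$ large enough that $F_d(S,f)\neq\varnothing$, for instance by fixing one admissible sequence in $S$ with $|a_n|\to\infty$.

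\emph{Upper bound.} For $|b|$ large, the inverse branch $\psi_b(w)=1/(b+w)$ on a cylinder has derivative of size comparable to $|b|^{-2}$, with bounded distortion. Hence there are uniform constants with
\[
\diam\ip{b_1,\dots,b_k}\asymp\prod_{j=1}^k|b_j|^{-2}
\]
for words whose digits all have modulus at least some $M_0$; this is where I use the Kim--Lee--Lim geometry. Fix $t>\tau(S)$ and $N\ge M_0$. Each $z\in F_d(S)$ eventually has all digits in $S_N:=\{\alpha\in S:|\alpha|\ge N\}$, so $F_d(S)$ is contained in the countable union, over prefixes $\mathbf w$, of the sets $\psi_{\mathbf w}(F_N)$, where $F_N$ is the set of points all of whose digits lie in $S_N$. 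Bi-Lipschitz images do not raise dimension, and countable unions do not either, so it suffices to bound $\dim_H F_N$. Cover $F_N$ by the level-$k$ cylinders with digits in $S_N$. This gives
\[
\sum \diam^{t/2}\le C^k\Bigl(\sum_{\alpha\in S_N}|\alpha|^{-t}\Bigr)^k .
\]
Choosing $N$ large makes the inner sum smaller than $1/C$, so the total tends to $0$ as $k\to\infty$. Hence $\dim_H F_N\le t/2$, and letting $t\downarrow\tau(S)$ gives the bound.

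\emph{Lower bound via a large-digit IFS.} For $M$ large, the maps $\{\psi_b:b\in\Ocal_d,\ |b|\ge M\}$ all send a fixed closed disc $D$ (or a suitable compact region inside $I_d$, chosen so that full-branch admissibility holds for large digits in the nearest-integer setting) into itself. They do so with contraction and bounded distortion, and with $|\psi_b'|\asymp|b|^{-2}$. This is a conformal IFS that is $2$-decaying in the sense of~\cite{NT25}. Digit admissibility for large digits is exactly what~\cite{KLL25} supplies: cylinders of large digits are full, meaning $T_d^k$ maps them onto a region containing $D$. I then invoke the Nakajima--Takahasi theorem. It asserts that for a $2$-decaying conformal IFS indexed by a lattice-like alphabet, any infinite subalphabet $S'$, and any cutoff $f\to\infty$ compatible with nonemptiness, the set of limit points with codes in $S'$, digits tending to infinity, and $|a_n|\le f(n)$ has dimension $\tau(S')/2$. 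Apply this with $S'=S\cap\{|\alpha|\ge M\}$; note $\tau(S')=\tau(S)$. Limit points of the subsystem are genuine points of $I_d\setminus\mathcal N_d$ whose nearest-integer digits are the IFS codes. Finally, to handle the constraint $|a_n|\le f(n)$ for small $n$, where $f(n)$ may be smaller than $M$: nonemptiness gives a finite admissible prefix $\mathbf w$ with digits in $S$. Apply $\psi_{\mathbf w}$ to the subsystem set built with the shifted cutoff $f(\cdot+|\mathbf w|)$, and take the tail times large enough that $f\ge M$. The image has the same dimension and lies in $F_d(S,f)$.

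\emph{Main obstacle.} The delicate step is the lower-bound setup. One must verify that for $|b|\ge M$ the inverse branches form a genuine conformal IFS on a common compact set compatible with the nearest-integer domain $I_d$. This is hexagonal for $d=3,7,11$, where full-cylinder properties can fail near the boundary. One must also check that the prefix-concatenation argument works, that is, that the prefix $\mathbf w$ is followed admissibly by arbitrary large digits. I would first try to extract from~\cite{KLL25} a uniform statement of the form ``$T_d^k\ip{\mathbf w}\supset D$ for some fixed disc $D$ containing the images $\psi_b(D)$ for $|b|\ge M$.'' If that fails, I would enlarge $M$ and shrink $D$ around $0$, using the fact that $\psi_b(D)$ lies in a ball of radius $O(|b|^{-1})$ around $1/b$.
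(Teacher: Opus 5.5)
\textbf{Genuine gap: the cutoff case of the lower bound (prefix concatenation).}

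Your upper bound is correct: you cover the all-large-digit set by level-$k$ cylinders and then use countable stability over finite prefixes. It is more elementary than the paper's route, which reads the upper bound off the equality in the Nakajima--Takahasi theorem. The lower bound for $F_d(S)$ is also fine, since $E(S')\subset F_d(S)$ directly and no prefix is needed there.

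The gap is exactly the step you flag. The claim ``apply $\psi_{\mathbf w}$ to the subsystem set built with the shifted cutoff; the image lies in $F_d(S,f)$'' is false in general. Neither repair you propose can work, because a nearest-integer prefix can forbid a positive proportion of \emph{all} large digits.

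Take $d=1$, $I_1=[-\frac12,\frac12]^2$ and $\mathbf w=(2)$. Then $\operatorname{Re}\bigl(1/(2+y)\bigr)\le\frac12$ iff $\abs{y+1}\ge1$, so $T_1\ip{2}\subset\set{y:\abs{y+1}\ge1}$, and the boundary circle of this set passes through $0$. For every $b\in\Z[i]$ with $\operatorname{Re} b\le-2$ one has $\abs{w+b+1}<\abs{w+b}$ for all $w\in I_1$, i.e.\ $h_b(I_1)\subset B(-1,1)$. Hence $h_2\bigl(h_b(I_1)\bigr)\cap I_1=\varnothing$, and $(2,b)$ is never admissible. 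So $T_d^k\ip{\mathbf w}$ need not contain any disc about $0$, and no uniform ``full cylinder after any prefix'' statement of the kind you hope to extract from \cite{KLL25} exists. Enlarging $M$ or shrinking $D$ does not help either: the obstruction is a direction, not a scale.

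The missing idea is to localize at a realized orbit rather than concatenate with the whole tail system, as the paper does.
\begin{enumerate}[label=(\arabic*), leftmargin=2.5em]
\item Fix $x_0\in F_d(S,f)$ and choose $N_0$ with $a_n(x_0)\in S'$ and $f(n)\ge\min_{\alpha\in S'}\abs{\alpha}$ for $n\ge N_0$. The cutoff hypothesis of the Nakajima--Takahasi theorem needs $\min_{S'}\abs{\alpha}$ here, not just $M$.
\item Let $\mathbf b$ be the first $N_0-1$ digits of $x_0$ and set $y_0:=T_d^{N_0-1}x_0$. Since $x_0\notin\mathcal N_d$, each $1/T_d^{j-1}x_0$ lies in the open translate $a_j(x_0)+I_d^\circ$. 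This gives an open $D\ni y_0$ with $h_{\mathbf b}(D)\subset\ip{\mathbf b}$ and $T_d^{N_0-1}\circ h_{\mathbf b}=\mathrm{id}$ on $D$ (Lemma~\ref{lm:local_prefix}).
\item By uniform contraction there is $\ell$ with $Y_\ell:=h_{a_1(y_0)}\circ\cdots\circ h_{a_\ell(y_0)}(I_d)\subset D$. These $\ell$ digits are digits of $x_0$, so they automatically lie in $S'$ and respect $f$.
\item Only then let the digits run freely over $S'$ with cutoff $n\mapsto f(n+N_0-1+\ell)$, and apply the Nakajima--Takahasi theorem to that set.
\item Carry the resulting set into $F_d(S,f)$ by $h_{a_1(y_0)}\circ\cdots\circ h_{a_\ell(y_0)}$ and then $h_{\mathbf b}$. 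Both maps are bi-Lipschitz on $I_d$, because the pole $-Q_k/Q_{k-1}$ of an admissible prefix map lies outside $B(0,R_d)$, as $\abs{Q_{k-1}}<\abs{Q_k}$.
\end{enumerate}

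A smaller point: in the form quoted in the paper, the Nakajima--Takahasi theorem yields $\tau(\abs{S'})/2$, where $\abs{S'}$ is the set of moduli. Identifying this with $\tau(S)/2$ needs the divisor-bound argument of Lemma~\ref{lm:tau_absS}.
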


\begin{rmk}
The empty alternative in Theorem~\ref{thm:HirstMain} comes from the admissibility of a finite prefix rather than from the large-digit tail.
Once a single admissible orbit exists, the large-digit full-shift subsystem yields the same Hausdorff dimension $\tau(S)/2$.
For this reason we do not formulate a separate emptiness criterion in terms of $f$ alone.
\end{rmk}

Fix a cutoff function $f:\N\to[1,\infty)$ with $f(n)\to\infty$.
Let $m\ge1$, let $(N_r)_{r\ge1}$ be a strictly increasing sequence of integers, and let
$\mathbf{b}^{(r)}=(b^{(r)}_1,\dots,b^{(r)}_m)\in(\Ocal_d)^m$ satisfy
\[
\min_{1\le j\le m}\abs{b^{(r)}_j}\to\infty
\qquad(r\to\infty),
\]
and
\[
\abs{b^{(r)}_j}\le f(N_r+j-1)
\qquad(r\ge1,\ 1\le j\le m).
\]
Define
\[
F_d(S,f;\mathbf{b}^{(\cdot)},N_\cdot):=
\Bigl\{x\in F_d(S,f):\ (a_{N_r}(x),\dots,a_{N_r+m-1}(x))=\mathbf{b}^{(r)}\ \text{for infinitely many }r\Bigr\}.
\]

\begin{thm}[Sparse patterns]\label{thm:pattern_intro}
Fix $d\in\{1,2,3,7,11\}$ and let $S\subset\Ocal_d$ be infinite.
With the notation above, assume moreover that $b^{(r)}_j\in S$ for all $r\ge1$ and $1\le j\le m$, and that
\[
\liminf_{r\to\infty}\frac{\log\Bigl(\prod_{j=1}^m \abs{b^{(r)}_j}\Bigr)}{N_r}=0.
\]
Then either $F_d(S,f;\mathbf{b}^{(\cdot)},N_\cdot)=\varnothing$ or
\[
\dim_H F_d(S,f;\mathbf{b}^{(\cdot)},N_\cdot)=\frac{\tau(S)}{2}.
\]
\end{thm}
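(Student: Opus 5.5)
The upper bound is free. Since $F_d(S,f;\mathbf b^{(\cdot)},N_\cdot)\subset F_d(S,f)\subset F_d(S)$, Theorem~\ref{thm:HirstMain} gives $\dim_H\le\tau(S)/2$. So everything reduces to the lower bound when the set is nonempty. The plan is to embed a large-digit Cantor set, of dimension arbitrarily close to $\tau(S)/2$, into the set by splicing the prescribed blocks into a free construction. The splicing costs only a subexponential amount at the sparse times $N_r$.

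First I pass to a subsequence along which the blocks have subexponential size. By the liminf hypothesis there are $r_1<r_2<\cdots$ with $\log\prod_j\abs{b^{(r_k)}_j}\le\epsilon_k N_{r_k}$, where $\epsilon_k\to0$. Thinning further, I may assume $N_{r_{k+1}}\ge N_{r_k}+m$ and $N_{r_{k+1}}/N_{r_k}\to\infty$ (or at least that the gaps grow). Next I fix a nonempty point $x_0$ of the set. It supplies an admissible finite prefix $w$ of length $L$, after which every digit is large.

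Then I use the large-digit $2$-decaying conformal IFS from the proof of Theorem~\ref{thm:HirstMain}. It rests on the \cite{KLL25} geometry: beyond a threshold $R$, every concatenation of digits with $\abs{a}\ge R$ is admissible and the cylinders satisfy bounded distortion, $\diam\ip{a_1\cdots a_n}\asymp\prod\abs{a_i}^{-2}$ up to $C^n$. I would choose a finite set $S_s\subset S\cap\{\abs{\alpha}\ge R\}$ with $\sum_{\alpha\in S_s}\abs{\alpha}^{-2s}$ large, for $s<\tau(S)/2$. For the requirement $\abs{a_n}\to\infty$ together with $\abs{a_n}\le f(n)$, the approach is the standard growing-alphabet trick from \cite{NT25}: alphabets $S^{(i)}$ whose moduli increase slowly, used on long time windows, and each chosen inside $\{\abs{\alpha}\le f(n)\}$ for $n$ in the window, which is possible since $f\to\infty$.

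The construction is the set $K$ of points with digits $w$ on $[1,L]$, $\mathbf b^{(r_k)}$ on $[N_{r_k},N_{r_k}+m-1]$, and free digits from the current alphabet elsewhere. Since $\min_j\abs{b_j^{(r)}}\to\infty$, the forced blocks are large digits once $k$ is big, so admissibility holds; we only need $b^{(r)}_j\in S$ and $\abs{b^{(r)}_j}\le f$, which are assumed. Then $K$ lies in our set, and I estimate $\dim_H K$ by the mass distribution principle. I put on $K$ the product measure with weights $\abs{\alpha}^{-2s}/Z_i$ on the free positions and a point mass on the forced positions. For a cylinder of level $n$ I compare $\mu(\ip{\cdot})$ with $\diam^{s'}$. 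The forced blocks contribute diameter factors $\prod\abs{b_j}^{-2}\ge e^{-2\epsilon_k N_{r_k}}$, which is subexponential relative to the free part of diameter roughly $e^{-cn}$. Likewise the distortion constants $C^n$ are absorbed because the free alphabet's entropy gap can be made larger than $\log C$ (the $2$-decaying property handles this in \cite{NT25}). This yields $\mu(B(x,\rho))\le\rho^{s-\delta}$, so $\dim_H K\ge s-\delta$; letting $s\uparrow\tau(S)/2$ and $\delta\downarrow0$ finishes the lower bound.

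The main obstacle is the last estimate for balls rather than cylinders, at scales that fall inside a forced block, and simultaneously with growing alphabets. There the measure does not shrink while the diameter does, by up to $e^{-2\epsilon_kN_{r_k}}$. My first attempt would be to handle this via the sparsity $N_{r_{k+1}}\gg N_{r_k}$, so that the accumulated loss is $o(\log(1/\diam))$. I would also invoke the separation, or open-set condition, for large-digit cylinders from the IFS to pass from cylinders to balls.
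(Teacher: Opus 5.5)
Your upper bound and your sparse subsequence $(r_k)$, chosen so that $\log\prod_j\abs{b^{(r_k)}_j}=o(N_{r_k})$ and earlier blocks are negligible, agree with the paper. There is, however, a genuine gap at the junction between the prefix $w=(a_1(x_0),\dots,a_L(x_0))$ and your free positions. You let the digit at position $L+1$ range over a whole alphabet of large digits, but you justify admissibility only for the forced blocks. The nearest-integer maps are not full-branched, so $T_d^L\ip{w}$ is in general a proper subset of $I_d$ and need not contain the large-digit limit set $\pi((\Ocal_d^{\ge A_0})^{\N})$.

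For example, take $d=1$. Up to boundary, $T_1\ip{2}=\set{u\in I_1:\ \abs{u+1}\ge1}$. If $\mathrm{Re}\,\alpha\le -1$ and $v\in I_1^\circ$, the point $u=h_\alpha(v)$ satisfies $\abs{u+1}<1$, because $\abs{1/u+1}<\abs{1/u}$ is equivalent to $\mathrm{Re}(v+\alpha)<-1/2$. So $(2,\alpha)$ is never admissible for such $\alpha$, however large $\abs{\alpha}$ is. Consequently $h_w$ of your tail Cantor set is not contained in $\ip{w}$. The admissible sequences with your prescribed pattern then need not form the product set on which your measure is built, and the measure is not carried by the target set.

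The missing step is the paper's anchoring. Lemma~\ref{lm:local_prefix} gives a neighbourhood $D$ of $y_0=T_d^L(x_0)$ on which $h_w$ is the correct inverse branch. Freezing the next $\ell$ digits of $y_0$ (Lemma~\ref{lm:cyl_shrink}) then forces the whole construction into $h_{a_1(y_0)}\circ\cdots\circ h_{a_\ell(y_0)}(I_d)\subset D$. This is also why the theorem is an empty/nonempty dichotomy rather than a criterion on the tail.

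Second, the dimension estimate is only sketched, and the difficulty is not confined to scales inside forced blocks. At a free level your alphabets have moduli spread over a range $[R_n,M_n]$ with $M_n/R_n$ uncontrolled. A ball comparable to a parent cylinder can therefore meet many children of very different sizes, and the mass-distribution bound for such a ball loses a factor polynomial in the alphabet size. That loss is absorbed by the exponential decay of $\rho$ only if $\frac1n\log\#(\text{alphabet at level }n)\to0$. You never impose this; the delayed windows with $\log\#F_k\le N_k/k$ in Lemma~\ref{lm:tail_select_cutoff} are what enforce it in the paper.

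The paper also avoids a hand-made ball estimate altogether. It treats the anchored construction as a non-autonomous conformal IFS with singleton levels at the anchor and forced positions, and checks subexponential cardinality. It then shows $\liminf_n\frac1n\log Z_n(t/2)\ge 6c_0>0$, using the same sparsity bookkeeping you propose (cf.~\eqref{eq:cumulative-loss-bound}), and applies Proposition~\ref{thm:nonaut_bowen}. If you add the anchoring step and route the lower bound through that proposition, or complete your ball estimate with subexponential alphabets, your outline goes through.
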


\subsection{Further results}

More generally, the same exponential mixing input yields a shrinking-target theorem for cylinder sets at separated times.

\begin{thm}[Cylinder shrinking targets]\label{thm:cylBC_intro}
Let $(n_j)_{j\ge1}$ be a strictly increasing sequence of nonnegative integers.
For each $j\ge1$ let $\mathbf{b}^{(j)}\in\Ocal_d^{k_j}$ be a finite word of length $k_j\ge1$ and set
\[
C_j:=\ip{\mathbf{b}^{(j)}}.
\]
Assume the separation condition
\(
n_{j+1}\ge n_j+k_j
\) for $j\geq 1$. Define
\[
E_d^{\mathrm{cyl}}:=\Bigl\{z\in I_d:\ T_d^{n_j}(z)\in C_j\ \text{for infinitely many }j\Bigr\}.
\]
Then
\[
\mu_d(E_d^{\mathrm{cyl}})=
\begin{cases}
0,& \displaystyle\sum_{j=1}^{\infty}\mu_d(C_j)<\infty,\\[1.2ex]
1,& \displaystyle\sum_{j=1}^{\infty}\mu_d(C_j)=\infty.
\end{cases}
\]
Moreover, in the divergence case, the corresponding counting function has variance bounded by a constant multiple of its expectation.
\end{thm}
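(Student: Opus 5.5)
The plan is to treat the events $A_j:=T_d^{-n_j}C_j$ and to prove a quasi-independence estimate for them, then apply the standard Borel--Cantelli machinery. The convergence half is immediate. By $T_d$-invariance of $\mu_d$ we have $\mu_d(A_j)=\mu_d(C_j)$, so if $\sum_j\mu_d(C_j)<\infty$ then the first Borel--Cantelli lemma gives $\mu_d(E_d^{\mathrm{cyl}})=0$.

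For the divergence half, the key input is the exponential mixing for cylinder sets from Baumgartner--Pollicott \cite{BP25}. We use it in the following form: there are constants $C>0$ and $\rho\in(0,1)$ such that for any cylinder $\ip{\mathbf b}$ of length $k$, any cylinder $\ip{\mathbf c}$, and any $n\ge k$,
\[
\abs{\mu_d\bigl(\ip{\mathbf b}\cap T_d^{-n}\ip{\mathbf c}\bigr)-\mu_d(\ip{\mathbf b})\mu_d(\ip{\mathbf c})}\le C\rho^{\,n-k}\mu_d(\ip{\mathbf b})\mu_d(\ip{\mathbf c}).
\]
This is the multiplicative ($\psi$-mixing type) form. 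If \cite{BP25} gives it only for a fixed second set, I would derive it from bounded distortion together with the fact that the Gibbs/Markov-type structure makes the conditional densities $T_d^k$-push-forwards of $\mu_d|_{\ip{\mathbf b}}$ uniformly Hölder (via Lemma~\ref{lm:mu_md} and \cite{KLL25}). Fix $i<j$ and pull the estimate back by $T_d^{-n_i}$. Since $n_j-n_i\ge k_i$ by the separation hypothesis, the estimate applies and gives
\[
\mu_d(A_i\cap A_j)\le \bigl(1+C\rho^{\,n_j-n_i-k_i}\bigr)\mu_d(C_i)\mu_d(C_j).
\]

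Now set $S_N=\sum_{j\le N}\1_{A_j}$ and $E_N=\sum_{j\le N}\mu_d(C_j)$. Summing the previous bound gives
\[
\operatorname{Var}(S_N)\le E_N+2C\sum_{i<j\le N}\rho^{\,n_j-n_i-k_i}\mu_d(C_i)\mu_d(C_j).
\]
The main obstacle is bounding the double sum by $O(E_N)$. Bounding $\mu_d(C_j)\le1$ fails to converge on its own, because many $j$ could have $n_j-n_i-k_i$ small. The separation hypothesis rescues this. It gives $n_j-n_i-k_i\ge n_j-n_{i+1}\ge j-i-1$, since the $n_j$ are strictly increasing integers. Hence
\[
\sum_{j>i}\rho^{\,n_j-n_i-k_i}\mu_d(C_j)\le\sum_{j>i}\rho^{\,j-i-1}=(1-\rho)^{-1},
\]
and so $\operatorname{Var}(S_N)\le(1+2C(1-\rho)^{-1})E_N$. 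This is exactly the claimed variance bound.

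Finally, apply Chebyshev along a subsequence, or directly the Erdős--Rényi / Sprindžuk form of Borel--Cantelli (quasi-independence on average, $\mathbb E S_N^2\le(\mathbb E S_N)^2+O(\mathbb E S_N)$). Since $E_N\to\infty$, this gives $S_N/E_N\to1$ $\mu_d$-a.e., and in particular $\mu_d(E_d^{\mathrm{cyl}})=1$.
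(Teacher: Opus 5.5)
Your proposal is correct and follows essentially the same route as the paper. The paper likewise pulls back the multiplicative cylinder mixing of Proposition~\ref{prop:mix-cyl} by invariance and uses the separation condition to make the exponents $n_j-n_i-k_i$ grow at least like $j-i-1$, which gives $\sum_{j>i}\abs{\mu_d(A_i\cap A_j)-\mu_d(C_i)\mu_d(C_j)}\ll\mu_d(C_i)$; it then concludes via the variance bound and Chebyshev with Borel--Cantelli along a subsequence (Lemma~\ref{lm:var_crit} and Proposition~\ref{prop:strongBC_sep}). Your contingency for re-deriving the mixing estimate is not needed, since Proposition~\ref{prop:mix-cyl} already holds uniformly in both cylinders.
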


A second almost-sure growth law identifies the exponential rate of the convergent denominators, yielding a L\'evy constant for the five nearest-integer systems.
\begin{thm}[L\'evy constant]\label{thm:levy}
Fix $d\in\{1,2,3,7,11\}$.
For $z\in I_d\setminus\mathcal N_d$, let $P_n(z)/Q_n(z)$ denote the $n$-th convergent of the digit expansion of $z$.
Define
\[
\beta_d:=-\int_{I_d}\log\abs{z}\,d\mu_d(z)\in(0,\infty),
\qquad
L_d:=e^{\beta_d}.
\]
Then for $\mu_d$-a.e.\ $z\in I_d\setminus\mathcal N_d$,
\[
\lim_{n\to\infty}\frac1n\log\abs{Q_n(z)}=\beta_d,
\qquad
\lim_{n\to\infty}\abs{Q_n(z)}^{1/n}=L_d.
\]
\end{thm}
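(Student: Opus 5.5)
The plan is to write $\log\abs{Q_n(z)}$ as a Birkhoff sum of $-\log\abs{\cdot}$ along the orbit of $z$, plus a bounded error, and then apply Birkhoff's ergodic theorem for the ergodic measure $\mu_d$.

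\emph{Step 1: the product formula.} With $z_k:=T_d^k(z)$ we have $z=1/(a_1+z_1)$, and by induction
\[
z_0z_1\cdots z_{n-1}=\pm\bigl(Q_n-Q_{n-1}z_n\bigr)^{-1},
\]
using the standard matrix recursion $Q_n=a_nQ_{n-1}+Q_{n-2}$ with the nearest-integer sign conventions. Hence
\[
\log\abs{Q_n}=-\sum_{k=0}^{n-1}\log\abs{z_k}-\log\Bigl|1-\tfrac{Q_{n-1}}{Q_n}z_n\Bigr|.
\]

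\emph{Step 2: the error term is bounded.} Since $\abs{z_n}\le\rho_d<1$ for $z\in I_d$, it suffices to show that $\abs{Q_{n-1}/Q_n}$ is bounded uniformly away from $1/\rho_d$, or at least that $\abs{1-(Q_{n-1}/Q_n)z_n}$ is bounded below. For nearest-integer algorithms the classical monotonicity $\abs{Q_{n-1}}<\abs{Q_n}$ should give this. Hurwitz proved it for $d=1,3$, Lakein for the others, and Ei--Nakada--Natsui~\cite{ENN23} treat the full family. Equivalently, one can use the fact that $Q_{n-1}/Q_n$ lies in the dual domain of the natural extension. That domain is a compact region whose closure does not meet $\{w:\ wz=1,\ z\in\overline{I_d}\}$, because the natural-extension domain has bounded density. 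I expect this to be the main obstacle. One has to extract from the cited geometry of \cite{ENN23,KLL25} a uniform lower bound $\abs{1-wz}\ge c_d>0$ for $w$ in the closure of the set of ratios $Q_{n-1}/Q_n$ and $z\in I_d$. Verifying this case by case for the five fundamental domains may be necessary, or possibly only a weaker $o(n)$ bound is available almost surely. The bound $\abs{1-wz}\le 2$ is trivial, so only the lower bound matters. If a uniform bound fails, I would fall back on Borel--Cantelli: by Lemma~\ref{lm:mu_md} the event that $T_d^nz$ lies within $e^{-\epsilon n}$ of the bad set has summable measure, so the error is $o(n)$ almost surely.

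\emph{Step 3: Birkhoff's theorem.} Set $\varphi(z)=-\log\abs{z}$. This function is $\ge -\log\rho_d>0$, and it is in $L^1(\mu_d)$, because $\mu_d$ is comparable to Lebesgue measure (Lemma~\ref{lm:mu_md}) and $\log\abs{z}$ is integrable on a planar bounded domain. Thus $\beta_d\in(0,\infty)$. Ergodicity of $\mu_d$ (from \cite{ENN23}, or from the exponential mixing of \cite{BP25}) gives
\[
\frac1n\sum_{k<n}\varphi(T_d^kz)\to\beta_d
\]
for $\mu_d$-a.e.\ $z$. Dividing the identity of Step~1 by $n$, the bounded error from Step~2 vanishes in the limit, which gives the first limit. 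The second limit follows by exponentiating.
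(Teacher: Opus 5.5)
Your proposal is correct and follows the paper's proof: a telescoping product identity, a uniform bound on the error term from $\abs{Q_{n-1}}<\abs{Q_n}$ (Lemma~\ref{lm:denom_growth}) together with $\abs{T_d^n z}\le R_d<1$, and Birkhoff's theorem for $\varphi=-\log\abs{z}\in L^1(\mu_d)$.

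Two small corrections. First, with the paper's conventions $T_d(z)=1/z-[1/z]$ and $Q_n=a_nQ_{n-1}+Q_{n-2}$, the identity is $z_0z_1\cdots z_{n-1}=(Q_n+Q_{n-1}z_n)^{-1}$; your minus sign already fails at $n=1$, though it is harmless for the estimate. Second, Step~2 is not an obstacle: monotonicity immediately gives $(1-R_d)\abs{Q_n}<\abs{Q_n+Q_{n-1}z_n}<2\abs{Q_n}$, so the natural-extension and Borel--Cantelli fallbacks are unnecessary.
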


The first-digit tail asymptotic also implies an almost-sure law of large numbers for the geometric mean of the digits.

\begin{thm}[Khinchine exponent]\label{thm:khinchine_intro}
Define
\[
\kappa_d:=\int_{I_d\setminus\mathcal N_d}\log\abs{a_1(z)}\,d\mu_d(z)\in(0,\infty),
\qquad
K_d:=e^{\kappa_d}.
\]
Then for $\mu_d$-a.e.\ $z\in I_d\setminus\mathcal N_d$,
\[
\lim_{n\to\infty}\frac1n\sum_{j=1}^n \log\abs{a_j(z)}=\kappa_d,
\qquad
\lim_{n\to\infty}\Bigl(\prod_{j=1}^n \abs{a_j(z)}\Bigr)^{1/n}=K_d.
\]
\end{thm}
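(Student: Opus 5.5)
The plan is to apply Birkhoff's ergodic theorem to the observable $\varphi(z):=\log\abs{a_1(z)}$ on the ergodic system $(I_d,T_d,\mu_d)$. Ergodicity of $\mu_d$ comes from the basic ergodic theory of \cite{ENN23,BP25}; exponential mixing for cylinders also implies it, since mixing on a generating family of cylinders gives mixing, hence ergodicity. Because $a_j(z)=a_1(T_d^{j-1}z)$, we have
\[
\frac1n\sum_{j=1}^n\log\abs{a_j(z)}=\frac1n\sum_{j=0}^{n-1}\varphi(T_d^jz).
\]
So both limits follow at once, by exponentiation, once we know that $\varphi\in L^1(\mu_d)$ and that $\kappa_d\in(0,\infty)$.

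For integrability I would use the sharp first-digit tail from \cite{BP25}, which gives $\mu_d(\abs{a_1}\ge t)\asymp t^{-2}$ for $t\ge1$. Alternatively one can use comparability of $\mu_d$ with $m_d$ (Lemma~\ref{lm:mu_md}) together with the geometric fact that $\{\abs{a_1}\ge t\}$ is essentially contained in a disc of radius $\asymp t^{-1}$ around $0$, because $a_1(z)$ is the nearest integer to $1/z$. The layer-cake formula then gives
\[
\int\log\abs{a_1}\,d\mu_d=\int_0^\infty\mu_d\bigl(\abs{a_1}\ge e^s\bigr)\,ds\le C+\int_0^\infty Ce^{-2s}\,ds<\infty .
\]

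Positivity and finiteness of $\kappa_d$ are handled as follows. We need $\abs{a_1(z)}\ge1$ for $z\in I_d\setminus\mathcal N_d$. This holds because $\abs{1/z}$ exceeds the inradius of $I_d$ and so $1/z$ never rounds to $0$; hence $a_1\in\Ocal_d\setminus\{0\}$ and $\abs{a_1}\ge1$ for every nonzero element of $\Ocal_d$. Thus $\varphi\ge0$. Moreover $\{\abs{a_1}\ge2\}$ contains a neighbourhood of $0$ minus a null set, so it has positive Lebesgue measure and therefore positive $\mu_d$-measure. It follows that $\kappa_d\ge\log 2\cdot\mu_d(\abs{a_1}\ge2)>0$, and the upper bound comes from the previous step.

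The main obstacle is only bookkeeping: confirming that $\varphi$ is measurable and defined off $\mathcal N_d$, and that ergodicity is available from the cited sources. Both are standard, so the proof reduces to Birkhoff's theorem applied to the tail estimate.
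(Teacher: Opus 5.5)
Your proposal is correct and follows essentially the same route as the paper: Birkhoff's theorem for $\log\abs{a_1}$ using the ergodicity of $\mu_d$ from Lemma~\ref{lm:acim}, integrability via the $t^{-2}$ tail and the layer-cake formula, and $\kappa_d>0$ from $\abs{a_1}\ge1$ together with a positive-measure set on which $\abs{a_1}$ is large (the paper gets this set from $H_d>0$, you from a small disc around $0$). One wording fix: $[1/z]\neq0$ because $\abs{1/z}\ge 1/R_d>1>R_d$, which puts $1/z$ outside $B(0,R_d)\supset I_d$, i.e.\ beyond the circumradius of $I_d$; merely exceeding the inradius would not rule out $1/z\in I_d'$.
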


As immediate corollaries of Theorem~\ref{thm:BB}, we obtain sharp growth laws for the digits.

\begin{cor}[Digit-growth log laws]\label{cor:loglaws_intro}
For $m_d$-a.e.\ $z\in I_d$,
\[
\limsup_{n\to\infty}\frac{\log\abs{a_n(z)}}{\log n}=\frac12,
\qquad
\limsup_{n\to\infty}\frac{\log\abs{a_n(z)}-\tfrac12\log n}{\log\log n}=\frac12.
\]
\end{cor}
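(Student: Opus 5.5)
Both log laws follow by applying Theorem~\ref{thm:BB} to explicit sequences $(u_n)$ and comparing with the series $\sum_n n^{-a}(\log n)^{-b}$. Since $m_d(E_d(u))\in\{0,1\}$ and countable intersections of full-measure sets have full measure, it is enough to treat countably many rational parameters. The condition $u_n\ge1$ is harmless: we use $\max(u_n,1)$ and only consider $n\ge3$, which does not change the limsup sets.

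\emph{First law.} For rational $\varepsilon>0$ we apply Theorem~\ref{thm:BB} twice.
\begin{itemize}
\item With $u_n=n^{1/2+\varepsilon}$, the sum $\sum u_n^{-2}=\sum n^{-1-2\varepsilon}$ converges. So a.e.\ $z$ has $\abs{a_n(z)}<n^{1/2+\varepsilon}$ for all large $n$, which gives $\limsup \log\abs{a_n}/\log n\le \tfrac12+\varepsilon$.
\item With $u_n=n^{1/2-\varepsilon}$ (take $\varepsilon<1/2$), the sum $\sum n^{-1+2\varepsilon}$ diverges. So a.e.\ $z$ has $\abs{a_n(z)}\ge n^{1/2-\varepsilon}$ infinitely often, which gives $\limsup\ge\tfrac12-\varepsilon$.
\end{itemize}
Intersecting over a sequence $\varepsilon\downarrow0$ yields the value $\tfrac12$ almost everywhere.

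\emph{Second law.} We write $\abs{a_n}=n^{1/2}(\log n)^{c}$. Then $\bigl(\log\abs{a_n}-\tfrac12\log n\bigr)/\log\log n=c$. For rational $\varepsilon>0$ we again use two sequences.
\begin{itemize}
\item Upper bound: take $u_n=n^{1/2}(\log n)^{1/2+\varepsilon}$. Then $u_n^{-2}=1/\bigl(n(\log n)^{1+2\varepsilon}\bigr)$, which is summable by the integral (Cauchy condensation) test. So eventually $\log\abs{a_n}<\tfrac12\log n+(\tfrac12+\varepsilon)\log\log n$, and the limsup is at most $\tfrac12+\varepsilon$.
\item Lower bound: take $u_n=n^{1/2}(\log n)^{1/2-\varepsilon}$. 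Then $\sum 1/\bigl(n(\log n)^{1-2\varepsilon}\bigr)$ diverges. So the reverse inequality holds infinitely often, and the limsup is at least $\tfrac12-\varepsilon$.
\end{itemize}
Letting $\varepsilon\downarrow0$ along a countable sequence gives the second law.

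There is no real obstacle: all the depth is in Theorem~\ref{thm:BB}. The only points to check are the boundary cases of the series $\sum n^{-1}(\log n)^{-s}$, namely convergence iff $s>1$, and that each $u_n$ is at least $1$ for $n\ge3$.
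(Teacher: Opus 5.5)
Your proposal is correct and follows essentially the same route as the paper: apply Theorem~\ref{thm:BB} to $u_n=n^{1/2\pm\varepsilon}$ and to $u_n=\sqrt{n}(\log n)^{1/2\pm\varepsilon}$, and use the convergence criterion for $\sum n^{-1}(\log n)^{-s}$. Restricting to countably many rational parameters makes explicit the intersection of full-measure sets that the paper leaves implicit when it lets the exponent tend to $1/2$.
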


As an immediate application of Theorem~\ref{thm:HirstMain}, we obtain explicit sample evaluations of $\tau(S)$ and of the corresponding Hausdorff dimensions.

\begin{cor}[Examples for $\tau(S)$ and $\dim_H F_d(S)$]\label{cor:tau_examples_intro}
Fix $d\in\{1,2,3,7,11\}$.
\begin{enumerate}[label=(\roman*), leftmargin=3.0em]
\item If $S=\Ocal_d$, then $\tau(S)=2$ and $\dim_H F_d(S)=1$.
\item If $S=\Z\subset\Ocal_d$, then $\tau(S)=1$ and $\dim_H F_d(S)=1/2$.
\item If $S=\{n^p:\ n\in\N\}\subset\Z$ for an integer $p\ge1$, then $\tau(S)=1/p$ and
\[
\dim_H F_d(S)=\frac{1}{2p}.
\]
\end{enumerate}
\end{cor}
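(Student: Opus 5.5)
The three parts are computations of convergence exponents followed by a direct appeal to Theorem~\ref{thm:HirstMain}. In each case $S$ is infinite, so the first assertion of that theorem gives $\dim_H F_d(S)=\tau(S)/2$. It therefore suffices to compute $\tau(S)$ by deciding for which $t$ the series $\sum_{\alpha\in S_*}\abs{\alpha}^{-t}$ converges.

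For (i) I would use a lattice-point count. Since $\Ocal_d$ is a lattice in $\C$ with covolume $c_d>0$, there are constants $A_d,B_d>0$ with
\[
A_d R\le \#\{\alpha\in\Ocal_d:\ R\le\abs{\alpha}<2R\}\cdot R^{-1}\le B_d R
\]
for all $R\ge1$. Thus a shell of radius $\asymp R$ contains $\asymp R^2$ points. Grouping the series dyadically gives
\[
\sum_{\alpha\in S_*}\abs{\alpha}^{-t}\asymp \sum_{k\ge0}2^{k(2-t)} + O(1),
\]
which converges iff $t>2$. Hence $\tau(\Ocal_d)=2$ and $\dim_H F_d(\Ocal_d)=1$. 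The finitely many small nonzero elements only add a finite amount. The lattice count itself is standard: compare with the area of the annulus using the fundamental parallelogram, whose diameter is bounded.

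For (ii), $\Z\subset\Ocal_d$ for every $d$, and $\sum_{n\in\Z\setminus\{0\}}\abs{n}^{-t}=2\zeta(t)$ converges iff $t>1$. So $\tau(\Z)=1$ and the dimension is $1/2$.

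For (iii), $S=\{n^p:n\in\N\}$ is infinite, and $\sum_{n\ge1}n^{-pt}$ converges iff $pt>1$, i.e.\ iff $t>1/p$. Hence $\tau(S)=1/p$ and $\dim_H F_d(S)=1/(2p)$.

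There is no real obstacle here. The only step with any content is the two-sided annulus count in (i), and it is elementary lattice geometry.
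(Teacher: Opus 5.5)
Your proposal is correct and follows the paper's proof essentially step for step: a dyadic annulus count (Lemma~\ref{lm:lattice_count}(ii) in the paper) gives $\tau(\Ocal_d)=2$, the $p$-series test gives $\tau(\Z)=1$ and $\tau(\{n^p:n\in\N\})=1/p$, and Theorem~\ref{thm:HirstMain} turns each value into $\dim_H F_d(S)=\tau(S)/2$. Your annulus bound, though oddly written with the factor $R^{-1}$, just says $\#\asymp R^2$; stating it for all $R\ge1$ rather than $R\ge R_0$ is harmless, since $[R,2R)$ always contains a rational integer.
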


\section{Unified setting and notation}\label{sec:setting}

\subsection{The five Euclidean imaginary quadratic rings and a nearest-integer fundamental domain}

Fix
\[
d\in\{1,2,3,7,11\},
\qquad
\Ocal_d=\text{the ring of integers in }\Q(\sqrt{-d})\subset\C.
\]
Let $\lambda$ denote planar Lebesgue measure on $\C$.
We work with the standard nearest-integer fundamental set $I_d'\subset\C$
(a strict Dirichlet-type polygonal fundamental domain, typically half-open) for translation by $\Ocal_d$
as in \cite{ENN23} such that:
\begin{enumerate}[label=(\roman*), leftmargin=3.0em]
\item $\bigcup_{\alpha\in\Ocal_d}(\alpha+I_d')=\C$;
\item the translates $\{\alpha+\mathrm{int}(I_d')\}_{\alpha\in\Ocal_d}$ are pairwise disjoint;
\item $0\in\mathrm{int}(I_d')$.
\end{enumerate}
Let $I_d:=\overline{I_d'}$.
Write $I_d^\circ:=\mathrm{int}(I_d)=\mathrm{int}(I_d')$.

\begin{lm}\label{lm:boundary_null}
We have $\lambda(\partial I_d')=0$.
\end{lm}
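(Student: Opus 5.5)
The plan is to use that $I_d'$ is a polygon, so its boundary lies in finitely many line segments, each of planar measure zero. The description of $I_d'$ as a polygonal Dirichlet-type domain is only loosely stated, so I will not rely on it alone. Instead I will derive the polygonal structure from properties (i)--(iii) together with the nearest-integer (Dirichlet/Voronoi) construction of \cite{ENN23}. Concretely, the interior is
\[
I_d^\circ=\{z\in\C:\ \abs{z}<\abs{z-\alpha}\ \text{for all }\alpha\in\Ocal_d\setminus\{0\}\}
\]
and the closure is
\[
I_d=\{z:\ \abs{z}\le\abs{z-\alpha}\ \forall\alpha\neq0\}.
\]

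First I reduce to finitely many lattice points. Since $I_d$ is bounded (it is contained in a ball of radius equal to the covering radius of $\Ocal_d$), only finitely many $\alpha\in\Ocal_d\setminus\{0\}$ give active constraints, namely those with $\abs{\alpha}\le 2\diam I_d$. Hence $I_d$ is a finite intersection of closed half-planes $H_\alpha=\{z:\ \mathrm{Re}(z\bar\alpha)\le\abs{\alpha}^2/2\}$, so it is a compact convex polygon. This gives the inclusion
\[
\partial I_d'\subset \overline{I_d'}\setminus \mathrm{int}(I_d')=I_d\setminus I_d^\circ\subset\bigcup_{\alpha}\{z:\ \mathrm{Re}(z\bar\alpha)=\abs{\alpha}^2/2\},
\]
with the union taken over finitely many $\alpha$.

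Second, each set in the union is a straight line, which has $\lambda$-measure zero by Fubini, or simply by rotation invariance of $\lambda$. A finite union of null sets is null, so $\lambda(\partial I_d')=0$.

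If one prefers to avoid the explicit Dirichlet description, there is an alternative route. The boundary of any bounded convex set in $\R^2$ is null: it is a rectifiable closed curve, or, after scaling about an interior point, $\partial K\subset (1+\varepsilon)K\setminus(1-\varepsilon)K$, whose measure is $O(\varepsilon)\lambda(K)$. Convexity of $I_d$ is immediate from its expression as an intersection of half-planes.

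The only real obstacle is justifying the half-plane description from the conventions of \cite{ENN23}. For the five chosen systems (rectangles for $d=1,2$ and hexagons for $d=3,7,11$), this can be checked directly by listing the explicit vertices, and the argument above then applies verbatim.
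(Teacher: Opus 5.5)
Your proposal is correct and follows essentially the same route as the paper. The paper cites \cite{BP25} for the fact that $I_d'$ is a polygonal Dirichlet domain, so $\partial I_d'$ is a finite union of piecewise $C^1$ curves and hence $\lambda$-null. You instead derive the polygon structure from the explicit Voronoi half-plane description, which is valid for the five chosen square, rectangle and hexagon domains, and then use the bisector lines.

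For this lemma the finiteness reduction is optional: $I_d\setminus I_d^\circ$ already lies in the countable union of bisector lines $\{\mathrm{Re}(z\bar\alpha)=\abs{\alpha}^2/2\}$, which is $\lambda$-null.
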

\begin{proof}
In {\cite[Section 3]{BP25}} the set $I_d'$ is given by a Dirichlet-type construction (a polygonal fundamental domain).
In particular, $\partial I_d'$ is a finite union of piecewise $C^1$ curves. Hence $\partial I_d'$ has planar Lebesgue measure $0$.
\end{proof}

\begin{lm}\label{lm:Id_convex}
The set $I_d$ is a compact convex polygon and satisfies $\overline{\mathrm{int}(I_d)}=I_d$.
\end{lm}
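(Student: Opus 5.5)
The plan is to identify $I_d$ explicitly as the Dirichlet (Voronoi) cell of $0$ for the lattice $\Ocal_d\subset\C$, which is the nearest-integer fundamental domain used in \cite{ENN23} and \cite[Section 3]{BP25}:
\[
V_d:=\bigl\{z\in\C:\ \abs{z}\le\abs{z-\alpha}\ \text{for all }\alpha\in\Ocal_d\bigr\}
=\bigcap_{\alpha\in\Ocal_d\setminus\{0\}}\bigl\{z:\ \mathrm{Re}(z\bar\alpha)\le\tfrac12\abs{\alpha}^2\bigr\}.
\]
Each set in the intersection is a closed half-plane, so $V_d$ is closed and convex. It is bounded because the lattice contains two $\R$-linearly independent vectors $\alpha_1,\alpha_2$, and the four half-planes for $\pm\alpha_1,\pm\alpha_2$ already cut out a bounded parallelogram. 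Hence $V_d$ is compact and convex.

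Next I would show $V_d$ is a polygon, namely that only finitely many half-planes matter. Since $V_d\subset\{\abs{z}\le R\}$ for some $R$, any $\alpha$ with $\abs{\alpha}>2R$ gives a half-plane containing the disc of radius $R$, so it is redundant. Only the finitely many lattice points with $\abs{\alpha}\le 2R$ remain. A finite intersection of half-planes that is compact is a convex polygon. Concretely, this gives the rectangle $\{\abs{\mathrm{Re} z}\le\frac12,\ \abs{\mathrm{Im} z}\le\frac{\sqrt d}{2}\}$ for $d=1,2$, and a hexagon for $d=3,7,11$, consistent with the labels $R$ and $H$.

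The remaining step is to match $I_d=\overline{I_d'}$ with $V_d$ using properties (i)--(iii).
\begin{itemize}
\item $I_d'$ is the half-open version of $V_d$ with $\mathrm{int}(I_d')=\mathrm{int}(V_d)$. This is the nearest-integer construction: $z\in I_d'$ exactly when $0$ is a nearest lattice point to $z$, with ties broken on the boundary.
\item Then $V_d\supset I_d'\supset\mathrm{int}(V_d)$.
\item $V_d$ is convex with nonempty interior, since $0\in\mathrm{int}$ by (iii) or directly because $\abs{\alpha}\ge1$ for $\alpha\ne0$. So $\overline{\mathrm{int}(V_d)}=V_d$.
\item Taking closures gives $I_d=V_d$, and also $\overline{\mathrm{int}(I_d)}=I_d$.
\end{itemize}
The identity $\overline{\mathrm{int}\,K}=K$ for convex $K$ with nonempty interior is standard: for $x\in K$ and $y\in\mathrm{int}\,K$, the open segment $(x,y]$ lies in $\mathrm{int}\,K$.

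The main obstacle is this identification step. The paper only asserts that $I_d'$ is ``Dirichlet-type'' via \cite{ENN23,BP25}, so I would cite that source for the equality $\mathrm{int}(I_d')=\mathrm{int}(V_d)$ rather than re-derive the tie-breaking conventions.
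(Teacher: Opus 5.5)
Your proposal is correct and follows essentially the same route as the paper. Both arguments identify $I_d$ with the closed Dirichlet--Voronoi cell of $0$ for $\Ocal_d$ through the construction in \cite{ENN23,BP25}, and both then use that a convex set with nonempty interior is the closure of its interior; the only difference is that you derive compactness, convexity and polygonality from the half-plane description, where the paper simply cites \cite{BP25} for them. One small precision: in the redundancy step, choose $R$ so that the parallelogram cut out by $\pm\alpha_1,\pm\alpha_2$ lies in $\{\abs{z}\le R\}$ and $\abs{\alpha_1},\abs{\alpha_2}\le 2R$; then the retained finite intersection itself lies in that disc, where every half-plane with $\abs{\alpha}>2R$ is automatically satisfied.
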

\begin{proof}
By the Dirichlet (Voronoi) construction in Baumgartner--Pollicott (2025, \S3)~\cite{BP25}, $I_d'$ is a strict polygonal fundamental domain, and its closure is a compact convex polygon.
Hence $I_d=\overline{I_d'}$ is a compact convex polygon. Since the interior of a convex polygon is dense in its closure, we also have $\overline{\mathrm{int}(I_d)}=I_d$.
\end{proof}

Fix a measurable selection map $[\cdot]:\C\to\Ocal_d$ such that $z-[z]\in I_d'$ for all $z\in\C$;
this choice is unique whenever $z\in \alpha+\mathrm{int}(I_d')$.

\subsection{The Gauss-type map, digits, cylinders, and the exceptional set}

Define $T_d:I_d\to I_d$ by
\[
T_d(z):=
\begin{cases}
\dfrac{1}{z}-\Bigl[\dfrac{1}{z}\Bigr], & z\neq 0,\\[6pt]
0, & z=0,
\end{cases}
\]
where $[\cdot]$ is the fixed measurable selection above.
Let
\[
\mathcal{B}_d:=\C\setminus \bigcup_{\alpha\in\Ocal_d}\bigl(\alpha+\mathrm{int}(I_d')\bigr),
\qquad
B_d:=\set{z\in I_d\setminus\{0\}:\ 1/z\in\mathcal{B}_d}.
\]
Thus $B_d$ is precisely the set of points for which the first digit $[1/z]$ is potentially ambiguous
(i.e.\ $1/z$ lies on the boundary of the partition by translates of $I_d'$).
By Lemma~\ref{lm:boundary_null} and translation-invariance of $\lambda$, the set $\mathcal{B}_d$
is a countable union of translates of $\partial I_d'$, hence $\lambda(\mathcal{B}_d)=0$.
Define the exceptional set
\[
\mathcal{N}_d:=\bigcup_{n\ge0}T_d^{-n}\bigl(B_d\cup\{0\}\bigr).
\]
The digit maps will be used only on $I_d\setminus\mathcal N_d$. For $z\in I_d\setminus\mathcal{N}_d$
define digits $a_n(z)\in\Ocal_d$ by
\[
a_{n+1}(z):=\Bigl[\frac{1}{T_d^n(z)}\Bigr]\qquad(n\ge0).
\]
To view each $a_n$ as a measurable map on all of $I_d$, we extend the definition by setting
$a_n(z):=0$ for all $n\ge1$ and $z\in\mathcal N_d$.

\begin{rmk}[Digit-defined sets are understood on $I_d\setminus\mathcal N_d$]\label{rmk:digit_domain}
The extension of $(a_n)$ to $\mathcal N_d$ is purely conventional and is used only to regard
each digit map as defined on all of $I_d$.
Whenever a set is defined through conditions on the digit sequence
(such as $E_d(u)$, $F_d(S)$, $F_d(S,f)$, and their refinements),
it is understood by definition as a subset of $I_d\setminus\mathcal N_d$.
\end{rmk}
For a finite word $\mathbf{b}=(b_1,\dots,b_k)\in\Ocal_d^k$, define the cylinder
\[
\ip{\mathbf{b}}
:=\ip{b_1,\dots,b_k}
:=\set{z\in I_d\setminus\mathcal N_d:\ a_1(z)=b_1,\dots,a_k(z)=b_k},
\]
well-defined modulo $\mathcal{N}_d$.

\begin{dfn}\label{dfn:admissible}
A finite word $\mathbf{b}\in\Ocal_d^k$ is called \emph{admissible} if $\ip{\mathbf{b}}\neq\varnothing$.
\end{dfn}

Let $m_d$ denote the normalized Lebesgue measure on $I_d$:
\[
m_d(A):=\frac{\lambda(A)}{\lambda(I_d)},\qquad A\subset I_d\ \text{Borel}.
\]

\begin{lm}\label{lm:null_Nd}
We have $\lambda(\mathcal{N}_d)=0$.
\end{lm}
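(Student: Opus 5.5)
Since $\mathcal N_d$ is a countable union of preimages, it suffices to prove two things:
\[
\lambda\bigl(B_d\cup\{0\}\bigr)=0
\quad\text{and}\quad
\lambda\bigl(T_d^{-n}(A)\bigr)=0 \text{ for every } n\ge0 \text{ and every Lebesgue-null } A\subset I_d .
\]
That is, $T_d$ should be nonsingular with respect to $\lambda$. Then $\mathcal N_d$ is a countable union of null sets and is null by countable subadditivity.

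\emph{First step.} The point $\{0\}$ is trivially null. For $B_d$, write $B_d=\iota(\mathcal B_d)\cap I_d\setminus\{0\}$, where $\iota(w)=1/w$. We already know $\lambda(\mathcal B_d)=0$ from Lemma~\ref{lm:boundary_null} and translation invariance. The map $\iota$ is a real-analytic diffeomorphism of $\C\setminus\{0\}$, hence locally Lipschitz. Exhaust $\C\setminus\{0\}$ by the compact annuli $\{1/k\le\abs{w}\le k\}$. On each annulus $\iota$ is Lipschitz, so it maps null sets to null sets. Taking the countable union over $k$ gives $\lambda(B_d)=0$.

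\emph{Second step.} By induction it is enough to show $\lambda(T_d^{-1}(A))=0$ whenever $\lambda(A)=0$. Decompose
\[
T_d^{-1}(A)\subset\{0\}\cup\bigcup_{\alpha\in\Ocal_d} \bigl\{z\in I_d\setminus\{0\}:\ [1/z]=\alpha,\ 1/z-\alpha\in A\bigr\}.
\]
The $\alpha$-th piece is contained in $\iota(\alpha+A)$. Translation preserves null sets, and $\iota$ maps null sets in $\C\setminus\{0\}$ to null sets by the first step. The only possible exception is the point $0\in\alpha+A$, which $\iota$ does not see. Hence each piece is null, and the countable union over $\alpha\in\Ocal_d$ is null. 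Note that this works regardless of the measurable selection $[\cdot]$, since we only use the containment.

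I do not expect a real obstacle; the only care needed is that $\iota$ is not globally Lipschitz. This is handled by the annulus exhaustion above, or alternatively by the change-of-variables formula with Jacobian $\abs{w}^{-4}$, which is locally bounded away from $0$. Measurability of $\mathcal N_d$ is not even needed, since we only claim outer measure zero, and Lebesgue measure is complete.
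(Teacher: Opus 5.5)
Your proposal is correct and follows essentially the same route as the paper: you show $\lambda(B_d)=0$ by using that inversion is Lipschitz on annuli bounded away from $0$ and $\infty$, then show that $T_d$ is nonsingular branch by branch, and finally take the countable union over $n$. The only difference is cosmetic. You get nonsingularity from the containment $T_d^{-1}(A)\subset\{0\}\cup\bigcup_{\alpha\in\Ocal_d}\iota\bigl((\alpha+A)\setminus\{0\}\bigr)$, whereas the paper restricts to the branches $U_\alpha$ off $\{0\}\cup B_d$, where $T_d(z)=1/z-\alpha$ is a $C^1$ diffeomorphism; your version has the small advantage of not depending on the selection $[\cdot]$ on $B_d$ or on measurability of the $U_\alpha$.
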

\begin{proof}
By Lemma~\ref{lm:boundary_null} we have $\lambda(\mathcal{B}_d)=0$.
To show $\lambda(B_d)=0$, write
\[
A_k:=\set{z\in I_d:\ 2^{-(k+1)}<\abs{z}\le 2^{-k}}\quad(k\ge 0).
\]
Since $I_d\subset B(0,R_d)$ with $R_d<1$ by Lemma~\ref{lm:Id_ball_ext}, we have
\[
I_d\setminus\{0\}=\bigcup_{k\ge 0}A_k.
\]
Fix $k\ge 0$. On $A_k$ the inversion map $\iota(z):=1/z$ is $C^1$ on a neighborhood of $\overline{A_k}$ and satisfies
\[
\sup_{z\in A_k}\abs{D\iota(z)}=\sup_{z\in A_k}\abs{z}^{-2}\le 2^{2k+2}<\infty.
\]
Moreover,
\[
\iota(A_k)\subset \set{w\in\C:\ 2^k\le \abs{w}<2^{k+1}},
\]
so $\iota^{-1}=\iota$ has bounded derivative on a neighborhood of $\overline{\iota(A_k)}$ as well. Hence $\iota:A_k\to \iota(A_k)$ is bi-Lipschitz, and therefore
\[
\lambda\bigl(\iota^{-1}(\mathcal{B}_d)\cap A_k\bigr)=0.
\]
Summing over $k\ge 0$ yields $\lambda(B_d)=0$. For each $\alpha\in\Ocal_d$ let
\[
U_\alpha:=\set{z\in I_d\setminus\{0\}:\ \Bigl[\frac{1}{z}\Bigr]=\alpha}.
\]
Then $I_d\setminus(\{0\}\cup B_d)=\bigsqcup_{\alpha\in\Ocal_d}U_\alpha$, and on each $U_\alpha$ we have $T_d(z)=1/z-\alpha$, a $C^1$ diffeomorphism.
Hence $T_d$ is nonsingular with respect to $\lambda$ on $I_d\setminus(\{0\}\cup B_d)$, so $\lambda(T_d^{-1}(E))=0$ for every $\lambda$-null set $E\subset I_d$.
Applying this with $E=B_d\cup\{0\}$ and iterating shows $\lambda(T_d^{-n}(B_d\cup\{0\}))=0$ for all $n\ge 0$, and therefore $\lambda(\mathcal N_d)=0$.
\end{proof}

\section{Auxiliary results}\label{sec:external}

We collect here the ingredients for the proofs of the main results.
Along with external ergodic and geometric inputs, we record several auxiliary counting and dimension lemmas used later.

\begin{lm}[{\cite[Lemma 3.1]{BP25}}]\label{lm:acim}
The map $T_d$ admits an ergodic invariant probability measure $\mu_d$ on $I_d$
which is absolutely continuous with respect to $\lambda$.
Moreover, there exists $C'>0$ such that for every Borel set $A\subset I_d$,
\[
\frac{1}{C'}\lambda(A)\ \le\ \mu_d(A)\ \le\ C'\lambda(A).
\]
\end{lm}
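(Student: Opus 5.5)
This lemma is quoted from \cite[Lemma 3.1]{BP25}, so the plan is to reconstruct the standard argument behind it rather than derive it from earlier results here. The route I would take is the classical Rényi--Adler construction via a Kuzmin-type bounded-distortion property. Everything is done on $I_d\setminus\mathcal N_d$, which has full measure by Lemma~\ref{lm:null_Nd}.

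\textbf{Step 1: inverse branches.} For each admissible word $\mathbf b$ of length $n$, $T_d^n$ restricted to $\ip{\mathbf b}$ is a Möbius map. Its inverse branch is
\[
\psi_{\mathbf b}(w)=\frac{P_{n-1}w+P_n}{Q_{n-1}w+Q_n},
\qquad
\abs{\psi_{\mathbf b}'(w)}=\abs{Q_n+Q_{n-1}w}^{-2}.
\]

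\textbf{Step 2: bounded distortion.} I would prove a uniform bound
\[
\sup_{w,w'}\frac{\abs{\psi_{\mathbf b}'(w)}}{\abs{\psi_{\mathbf b}'(w')}}\le K,
\]
valid for all $\mathbf b$ and all $w,w'\in T_d^n\ip{\mathbf b}$. This reduces to showing that $\abs{Q_{n-1}/Q_n}$ stays bounded away from $1/\sup_{w\in I_d}\abs{w}$. Since $I_d\subset B(0,R_d)$ with $R_d<1$, it suffices that $\abs{Q_{n-1}/Q_n}\le c<1/R_d$ uniformly. This is the geometric heart of the argument and the step I expect to be the main obstacle. The standard way to get it is through the finite range structure of the nearest-integer algorithms, as in \cite{ENN23}. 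The images $T_d^n\ip{\mathbf b}$ take only finitely many shapes, and the ratio $Q_{n-1}/Q_n$ lies in a compact set determined by those shapes that avoids the dangerous region. I would also need a uniform lower bound on the area of each of these finitely many shapes.

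\textbf{Step 3: uniform control of Lebesgue measure.} Combining the distortion bound with the area lower bound from Step 2 gives, for every Borel set $A$ and every $n$,
\[
\lambda\bigl(T_d^{-n}A\cap\ip{\mathbf b}\bigr)\le C\,\lambda(A)\,\frac{\lambda(\ip{\mathbf b})}{\lambda(I_d)}.
\]
Summing over $\mathbf b$ yields $\lambda(T_d^{-n}A)\le C\lambda(A)$. The densities of the push-forwards $(T_d^n)_*m_d$ are therefore bounded above by $C$ uniformly in $n$.

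\textbf{Step 4: existence of $\mu_d$ and the upper bound.} Take a weak-$*$ limit of the Cesàro averages $\frac1N\sum_{n<N}(T_d^n)_*m_d$. The uniform density bound from Step 3 passes to the limit, so the limit $\mu_d$ is absolutely continuous with density at most $C$. Invariance follows because the averages are asymptotically invariant, and transfer operator continuity on $L^1$ justifies passing to the limit. This gives $\mu_d\le C'\lambda$.

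\textbf{Step 5: lower bound on the density.} For the reverse inequality I would use the analogous lower estimate
\[
\lambda\bigl(T_d^{-n}A\cap\ip{\mathbf b}\bigr)\ge C^{-1}\lambda(A)\,\lambda(\ip{\mathbf b})
\]
for every $A$ contained in the image $T_d^n\ip{\mathbf b}$. Those images are the finitely many shapes from Step 2. I would then need that some bounded iterate of such images covers $I_d$ up to a null set. That is a finite combinatorial check on the partition of the natural extension in \cite{ENN23}, and it yields a density bounded below by a positive constant.

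\textbf{Step 6: ergodicity.} Bounded distortion also gives the Knopp-lemma criterion. Any invariant set $E$ of positive measure occupies a proportion at least $\mathrm{const}\cdot\lambda(E)$ of every cylinder. Cylinder diameters shrink to $0$, so the cylinders generate the Borel $\sigma$-algebra, and hence $\lambda(E)=\lambda(I_d)$. The genuinely hard input throughout is the finite range and $Q$-ratio geometry used in Steps 2 and 5, which is system-specific and is where the case analysis over $d$ happens.
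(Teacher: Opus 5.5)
The paper gives no argument for this lemma. It is imported verbatim from \cite[Lemma 3.1]{BP25}, and the introduction attributes the acim and natural extension to \cite{ENN23}. In the natural-extension route, the density is an explicit fibre integral of $\abs{1+zw}^{-4}$ over finitely many fibre sets in the dual variable. The two-sided bound is then essentially automatic, and ergodicity is inherited from the extension.

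Your R\'enyi--Ces\`aro--Knopp reconstruction is a genuinely different route. It avoids building the extension and never identifies the density, but it needs two inputs from \cite{ENN23}: finite range structure and an irreducibility (covering) property.

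You have also misplaced the difficulty, because Step~2 is immediate:
\begin{itemize}
\item for $n\ge2$, Lemma~\ref{lm:denom_growth} gives $\abs{Q_{n-1}/Q_n}<1$;
\item for $n=1$, $\abs{Q_0/Q_1}=1/\abs{b_1}\le 1$;
\item $\abs{w}\le R_d<1$.
\end{itemize}
Hence $1-R_d\le \abs{1+(Q_{n-1}/Q_n)w}\le 1+R_d$, and the distortion constant depends only on $R_d$. This is the same observation used in Lemma~\ref{lm:prefix_conformal}. Finite range is genuinely needed only for the uniform lower bound on $\lambda(T_d^n\ip{\mathbf b})$ in Step~3 and for Step~5.

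Step~6, however, fails as written. If $V:=T_d^n\ip{\mathbf b}$ is a proper shape, bounded distortion gives only
\[
\frac{\lambda(E\cap\ip{\mathbf b})}{\lambda(\ip{\mathbf b})}\ \ge\ K^{-1}\,\frac{\lambda(E\cap V)}{\lambda(V)}.
\]
Nothing in your argument prevents an invariant $E$ from missing $V$ up to a null set, in which case it misses every cylinder with image $V$. You must first show $\lambda(E\cap V)\ge c_V\,\lambda(E)$ for each of the finitely many shapes.

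This follows from $E=T_d^{-k}E$ once Step~5 is made precise. The needed property is: for each shape $V$ there are finitely many words $\mathbf c$ of a fixed length $k_V$ such that the sets $T_d^{k_V}(V\cap\ip{\mathbf c})$ cover $I_d$ up to a null set.

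The same property is what turns Step~5 into a density lower bound:
\begin{itemize}
\item at each level $n$, some shape $V$ carries cylinder mass at least $1/M$, where $M$ is the number of shapes, so $\mathcal L^n\1\ge c\,\1_V$ for the transfer operator $\mathcal L$;
\item applying $\mathcal L^{k_V}$ and the covering property gives $\mathcal L^{n+k_V}\1\ge c'$;
\item since $k_V$ is bounded, this survives Ces\`aro averaging.
\end{itemize}

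Two further points. In Step~4, take the limit as a weak $L^1$ limit of densities bounded by $C$; weak-$*$ limits of measures need not be invariant because $T_d$ is discontinuous. With these repairs your route is sound, modulo the finite-range and covering input from \cite{ENN23}, which neither you nor the paper proves.
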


\begin{lm}\label{lm:mu_md}
There exists $C''>0$ such that for every Borel set $A\subset I_d$,
\[
\frac{1}{C''}\,m_d(A)\ \le\ \mu_d(A)\ \le\ C''\,m_d(A).
\]
In particular, for Borel $E\subset I_d$ we have $\mu_d(E)=0$ if and only if $m_d(E)=0$, and $\mu_d(E)=1$ if and only if $m_d(E)=1$.
Moreover, for any sequence of Borel sets $(E_n)_{n\ge1}$,
\[
\sum_{n=1}^\infty \mu_d(E_n)<\infty\quad\Longleftrightarrow\quad \sum_{n=1}^\infty m_d(E_n)<\infty.
\]
\end{lm}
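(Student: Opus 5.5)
The plan is to derive Lemma~\ref{lm:mu_md} directly from Lemma~\ref{lm:acim} by rescaling with the constant $\lambda(I_d)$. First note that $0<\lambda(I_d)<\infty$. By Lemma~\ref{lm:Id_convex}, $I_d$ is a compact convex polygon, so its measure is finite. It also contains an open neighbourhood of $0$, because $0\in\mathrm{int}(I_d')$, so its measure is positive. For Borel $A\subset I_d$ we have $\lambda(A)=\lambda(I_d)\,m_d(A)$. Substituting this into the two-sided bound of Lemma~\ref{lm:acim} gives
\[
\frac{\lambda(I_d)}{C'}\,m_d(A)\le\mu_d(A)\le C'\lambda(I_d)\,m_d(A).
\]
We then take
\[
C'':=\max\bigl\{C'\lambda(I_d),\ C'/\lambda(I_d)\bigr\}\ge1.
\]
This $C''$ makes both inequalities hold simultaneously, which is the main estimate.

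The consequences follow quickly. From $\mu_d(E)\le C''m_d(E)$ and $m_d(E)\le C''\mu_d(E)$ we see that the two measures have the same null sets. For the statements about full measure, apply the null-set equivalence to the complement $I_d\setminus E$, which is Borel. Since both $\mu_d$ and $m_d$ are probability measures on $I_d$, $\mu_d(E)=1$ holds iff $\mu_d(I_d\setminus E)=0$. This holds iff $m_d(I_d\setminus E)=0$, which holds iff $m_d(E)=1$.

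Finally, summing the termwise inequalities $\mu_d(E_n)\le C''m_d(E_n)$ and $m_d(E_n)\le C''\mu_d(E_n)$ over $n$ gives that $\sum_n\mu_d(E_n)$ and $\sum_n m_d(E_n)$ bound each other up to the factor $C''$. Hence they converge or diverge together. Here the $E_n$ are implicitly taken as Borel subsets of $I_d$; if they are not, intersect them with $I_d$.

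There is no real obstacle. The only point needing care is that $\lambda(I_d)$ is finite and nonzero, which we justified above from Lemma~\ref{lm:Id_convex} and property (iii) of $I_d'$.
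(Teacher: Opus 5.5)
Your proposal is correct and follows the paper's argument: rescale the two-sided bound of Lemma~\ref{lm:acim} by the constant $\lambda(I_d)$, then read off the null-set, full-measure and series equivalences. You also make explicit two points the paper leaves implicit, namely that $0<\lambda(I_d)<\infty$ and that the full-measure case follows by passing to complements.
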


\begin{proof}
By Lemma~\ref{lm:acim} and the definition $m_d(A)=\lambda(A)/\lambda(I_d)$, we have
\[
\mu_d(A)\asymp \lambda(A)\asymp m_d(A)
\]
uniformly over Borel $A\subset I_d$, with constants depending only on $d$.
The equivalences for null sets and the series comparison follow.
\end{proof}

\begin{rmk}\label{rmk:mu_md_transfer}
By Lemma~\ref{lm:mu_md}, any $\mu_d$-null set is $m_d$-null and conversely, and likewise for full-measure sets.
Henceforth we state almost-everywhere results with respect to $\mu_d$; the corresponding $m_d$ formulation follows immediately.
\end{rmk}

\begin{lm}[{\cite[Lemma 3.2]{BP25}}]\label{lm:tail}
There exists $H_d>0$ such that
\[
\mu_d\Bigl(\set{z\in I_d\setminus\mathcal N_d:\ \abs{a_1(z)}>t}\Bigr)
\ =\ \frac{H_d}{t^{2}}+O\!\left(\frac{1}{t^{3}}\right)
\qquad (t\to\infty).
\]
\end{lm}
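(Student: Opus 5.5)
The plan is to reduce the tail event to a small disc around the origin, up to a thin annulus, and then integrate the invariant density over that disc. Write $\{\abs{a_1}>t\}=\{z\in I_d\setminus\mathcal N_d:\ \abs{[1/z]}>t\}$. Let $R_d:=\sup_{w\in I_d}\abs{w}$, which is finite (and $<1$) since $I_d$ is compact. Because $1/z-[1/z]\in I_d'$, we have $\bigl|\abs{1/z}-\abs{[1/z]}\bigr|\le R_d$. Hence, for $t>R_d$,
\[
\Bigl\{\abs{z}<\tfrac{1}{t+R_d}\Bigr\}\setminus\mathcal N_d\ \subset\ \{\abs{a_1}>t\}\ \subset\ \Bigl\{\abs{z}<\tfrac{1}{t-R_d}\Bigr\}.
\]
Since $0\in\mathrm{int}(I_d)$, these discs lie inside $I_d$ once $t$ is large. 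Also $\mu_d(\mathcal N_d)=0$ by Lemma~\ref{lm:null_Nd} and Lemma~\ref{lm:acim}. The problem therefore reduces to an asymptotic for $\mu_d(D(0,r))$ as $r\to0$. Note that the two radii differ from $1/t$ only by $O(t^{-2})$.

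For that asymptotic I would use the explicit form of the invariant density $h_d=d\mu_d/d\lambda$. It comes from the natural extension of Ei--Nakada--Natsui~\cite{ENN23}, as used in \cite[Section 3]{BP25}. There $h_d(z)=\int_{\Omega_d(z)}\abs{1+zw}^{-4}\,d\lambda(w)$ for a fibre region $\Omega_d(z)$ that is constant on each of finitely many polygonal pieces of $I_d$. The comparability constant $C'$ in Lemma~\ref{lm:acim} is exactly the statement that $h_d$ is bounded above and below.

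From this formula, $h_d$ is Lipschitz on each piece away from the piece boundaries. Near $0$, finitely many pieces meet, and their boundaries are segments through $0$ (or they avoid $0$). So there are finitely many sectors $\Sigma_1,\dots,\Sigma_N$ with vertex $0$ and opening angles $\theta_i$, and constants $c_i>0$, such that $\abs{h_d(z)-c_i}\le C\abs{z}$ for $z\in\Sigma_i$ near $0$. Integrating in polar coordinates then gives
\[
\mu_d(D(0,r))=\tfrac12\Bigl(\sum_i\theta_i c_i\Bigr)r^2+O(r^3).
\]

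Finally I would substitute $r=1/(t\pm R_d)=t^{-1}+O(t^{-2})$. This gives $r^2=t^{-2}+O(t^{-3})$, so both the upper and lower bounds equal $H_d t^{-2}+O(t^{-3})$, where $H_d:=\tfrac12\sum_i\theta_ic_i>0$; in the case where $h_d$ is continuous at $0$ this is just $\pi h_d(0)$. The annulus between the two discs has $\lambda$-measure $O(t^{-3})$, which is consistent with the error term.

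The main obstacle is the regularity of $h_d$ at $0$, namely the piecewise Lipschitz structure with finitely many sectors at the origin. Boundedness alone, as in Lemma~\ref{lm:acim}, gives only $\mu_d(\abs{a_1}>t)\asymp t^{-2}$. To get the precise constant and the $O(t^{-3})$ error, I would first extract the explicit fibre description from \cite{ENN23}, for each of the five systems. I would then check that the fibre regions vary only across finitely many lines in a neighbourhood of $0$, and that the integrand $\abs{1+zw}^{-4}$ is smooth in $z$ uniformly for $w$ in bounded fibres.
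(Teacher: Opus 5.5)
The paper does not prove this statement; it imports it verbatim from \cite[Lemma 3.2]{BP25}. Your argument is therefore an independent derivation, and its skeleton is sound.

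The sandwich $D(0,1/(t+R_d))\setminus\mathcal N_d\subset\{\abs{a_1}>t\}\subset D(0,1/(t-R_d))$ is correct. The annulus between the two discs has $\lambda$-measure $O(t^{-3})$, so everything reduces to showing $\mu_d(D(0,r))=H_dr^2+O(r^3)$. Getting this from the natural-extension density $h_d(z)\propto\int_{U(z)}\abs{1+zw}^{-4}\,d\lambda(w)$ is the right mechanism, with the fibres constant on the finitely many atoms of the finite range structure of \cite{ENN23}.

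Compared with the citation, your route buys an explicit constant: $H_d$ is an angle-weighted average of the limiting fibre areas at $0$. It also shows exactly where regularity of $h_d$ enters: only the $O(t^{-3})$ refinement needs it. In fact the paper uses Lemma~\ref{lm:tail} only through the two-sided bound of Corollary~\ref{cor:t2}, in Theorem~\ref{thm:BB} and in the proof of Theorem~\ref{thm:khinchine_intro}. Your sandwich together with Lemma~\ref{lm:acim} already gives that bound, using nothing about $h_d$ beyond two-sided boundedness.

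Two details in your second step need repair.

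\emph{The boundaries through $0$ are arcs, not segments.} The atoms are not polygonal. Their boundaries are circular arcs, namely images of the sides of $I_d$ under inverse branches. For $d=1$, for example, $T_1\ip{2}$ equals $I_1\cap\{\abs{z+1}\ge1\}$ up to a null set. The circles $\abs{z\pm1}=1$ and $\abs{z\pm i}=1$ all pass through $0$. Moreover $\abs{z-1}=1$ and $\abs{z+1}=1$ are tangent to each other there, as are $\abs{z-i}=1$ and $\abs{z+i}=1$. This produces cusp-shaped atoms of opening angle $0$. So your planned check that ``the fibre regions vary only across finitely many lines'' would fail as stated. The conclusion survives, because each arc stays within $O(\abs{z}^2)$ of its tangent line at $0$. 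Hence every atom $V$ satisfies $\lambda(V\cap D(0,r))=\tfrac{\theta_V}{2}r^2+O(r^3)$, with $\theta_V=0$ for the cusps.

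\emph{You need the Lipschitz bound up to the vertex.} Knowing $h_d$ is ``Lipschitz away from the piece boundaries'' does not give $\abs{h_d(z)-c_i}\le C\abs{z}$ near $0$, since $0$ lies on those boundaries. The formula does give it. On the closure of each atom, $h_d$ is a single function $z\mapsto\int_U\abs{1+zw}^{-4}\,d\lambda(w)$. In the convention $\hat T(z,w)=(T_dz,1/(a_1(z)+w))$ the fibre coordinate is $Q_{n-1}/Q_n$, so by Lemma~\ref{lm:denom_growth} the fibre $U$ lies in the closed unit disc. Then $\abs{1+zw}\ge1-R_d>0$ for $\abs{z}\le R_d$, so this function is smooth on $\abs{z}\le R_d$.
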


\begin{prop}[{\cite[Proposition 3.1]{BP25}}]\label{prop:mix-cyl}
There exist $\alpha>0$ and $0<\rho<1$ such that for all $k,m\in\N$,
all $\mathbf{b}\in\Ocal_d^k$, all $\mathbf{c}\in\Ocal_d^m$, and all $n\ge0$,
\[
\Bigl|\,
\mu_d\!\Bigl(\ip{\mathbf{b}}\cap T_d^{-(n+k)}\ip{\mathbf{c}}\Bigr)
-\mu_d(\ip{\mathbf{b}})\mu_d(\ip{\mathbf{c}})
\,\Bigr|
\ \le\
\alpha\,\rho^{n}\,\mu_d(\ip{\mathbf{b}})\mu_d(\ip{\mathbf{c}}).
\]
\end{prop}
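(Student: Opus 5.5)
The plan is the standard transfer-operator argument, in the framework of \cite{KLL25} and \cite[Section~3]{BP25}. Let $\mathcal L$ be the Perron--Frobenius operator of $T_d$ with respect to $\lambda$,
\[
\mathcal L g(w)=\sum_{\alpha}\mathbf 1_{T_d(U_\alpha)}(w)\,\frac{g\bigl(1/(w+\alpha)\bigr)}{\abs{w+\alpha}^{4}},
\]
and let $h=d\mu_d/d\lambda$. Lemma~\ref{lm:acim} gives $C'^{-1}\le h\le C'$. Let $\psi_{\mathbf b}$ be the inverse branch of $T_d^k$ on $\ip{\mathbf b}$. Duality then gives
\[
\mu_d\bigl(\ip{\mathbf b}\cap T_d^{-(n+k)}\ip{\mathbf c}\bigr)=\int \mathcal L^{n}\bigl(\mathcal L^{k}(\mathbf 1_{\ip{\mathbf b}}h)\bigr)\,\mathbf 1_{\ip{\mathbf c}}\,d\lambda,
\qquad
\mathcal L^{k}(\mathbf 1_{\ip{\mathbf b}}h)=\mathbf 1_{T_d^k\ip{\mathbf b}}\cdot (h\circ\psi_{\mathbf b})\,\abs{\psi_{\mathbf b}'}^{2}.
\]

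\textbf{Step 1 (normalising the first cylinder).} I will use the finite range structure of the nearest-integer systems \cite{ENN23,KLL25}: the sets $T_d^k\ip{\mathbf b}$ range over a finite family of convex-arc polygons, up to null sets. I will also use bounded distortion for the M\"obius branches $\psi_{\mathbf b}$: $\abs{\psi_{\mathbf b}'(w)}/\abs{\psi_{\mathbf b}'(w')}$ is uniformly bounded and uniformly Lipschitz in $w$. Together these let me write $\mathcal L^{k}(\mathbf 1_{\ip{\mathbf b}}h)=\mu_d(\ip{\mathbf b})\,g_{\mathbf b}$. Here $\int g_{\mathbf b}\,d\lambda=1$, and $g_{\mathbf b}$ lies in a fixed ball of the Banach space $\mathcal B$ on which $\mathcal L$ acts. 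For $\mathcal B$ I would take piecewise Lipschitz functions on the finite-range partition, or a two-dimensional $BV$ space as in \cite{KLL25}.

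\textbf{Step 2 (spectral gap).} The goal is $\mathcal L=\Pi+\mathcal R$ on $\mathcal B$, where $\Pi g=(\int g\,d\lambda)\,h$ and $\|\mathcal R^n\|_{\mathcal B}\le C\rho^n$. Quasi-compactness should follow from a Lasota--Yorke inequality. This uses uniform expansion of some iterate $T_d^{N}$, the summability $\sum_\alpha\abs{\alpha}^{-4}<\infty$ that controls the infinitely many branches, and the fact that branch images come from finitely many shapes, which keeps the boundary contributions finite. Next I must exclude the peripheral spectrum other than the simple eigenvalue $1$. Simplicity comes from ergodicity (Lemma~\ref{lm:acim}). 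Absence of other unimodular eigenvalues should come from exactness or aperiodicity: for instance, some cylinder image contains an open set and some large-digit cylinder maps onto all of $I_d$, which gives a fixed point of a suitable iterate.

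\textbf{Step 3 (conclusion).} Combining the two steps, $\mathcal L^n g_{\mathbf b}=h+\mathcal R^n g_{\mathbf b}$ with $\|\mathcal R^n g_{\mathbf b}\|_\infty\le C\rho^n$, uniformly in $\mathbf b$. Integrating against $\mathbf 1_{\ip{\mathbf c}}$ gives
\[
\mu_d(\ip{\mathbf b})\Bigl(\mu_d(\ip{\mathbf c})+O\bigl(\rho^n\lambda(\ip{\mathbf c})\bigr)\Bigr).
\]
Since $\lambda\le C'\mu_d$, this is the claimed bound with $\alpha=CC'$.

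\textbf{Main obstacle.} I expect Step~2 to be the hardest: the Lasota--Yorke inequality in two dimensions with infinitely many branches. Because cylinder images are not all of $I_d$, one must control how indicator boundaries propagate under $\mathcal L$. The first thing I would try is the finite-range partition of \cite{KLL25}. It reduces the problem to a finite vector of Lipschitz functions on fixed pieces, so that $\mathcal L$ becomes a matrix of weighted composition operators. Contraction in the Lipschitz seminorm then follows directly from bounded distortion, as in the real Gauss map case.
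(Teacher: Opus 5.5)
The paper gives no argument for this statement. It is quoted from Baumgartner--Pollicott \cite{BP25}, whose spectral input rests on the transfer-operator framework of \cite{KLL25}, and it is then used as a black box in Lemma~\ref{lm:mix-threshold} and in the proof of Theorem~\ref{thm:cylBC_intro}. Your proposal reconstructs the mechanism behind such a result:
\begin{itemize}
\item push $\1_{\ip{\mathbf b}}h$ forward by $\mathcal L^k$ and normalise by $\mu_d(\ip{\mathbf b})$;
\item use a spectral gap on a space that embeds in $L^\infty$;
\item integrate the remainder against $\1_{\ip{\mathbf c}}$.
\end{itemize}
This is the standard and correct route to exponential $\psi$-mixing, and the multiplicative error $\rho^n\mu_d(\ip{\mathbf b})\mu_d(\ip{\mathbf c})$ comes out exactly as you describe. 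The citation buys brevity. Your route shows which structural features make the constants independent of $k,m,\mathbf b,\mathbf c$: finite range, uniform M\"obius distortion, and full large-digit branches.

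Three points should be tightened.

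\emph{First, discard the two-dimensional $BV$ option.} In dimension two $BV\not\subset L^\infty$. With only $L^2$ control, Step~3 gives an error of order $\rho^n\lambda(\ip{\mathbf c})^{1/2}$, which is not $O(\rho^n\mu_d(\ip{\mathbf c}))$ for small cylinders. The piecewise Lipschitz space is what delivers the $\psi$-mixing form.

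\emph{Second, Step~1 needs two facts you do not state.} It uses that $h$ is itself Lipschitz on atoms. This is true once $h$ is identified as the fixed point of $\mathcal L$ in $\mathcal B$, so Step~1 logically comes after Step~2. It also uses that $h\circ\psi_{\mathbf b}$ creates no new discontinuities off the fixed partition. This holds because, for a range set $V=T_d^j\ip{\mathbf e}$, one has $T_d(\ip{a}\cap V)=T_d^{j+1}\ip{\mathbf e a}$. Hence every inverse branch maps each atom of the partition generated by the finitely many range sets into a single atom. The uniform Lipschitz bound on $\log\abs{\psi_{\mathbf b}'}$ follows from Lemmas~\ref{lm:prefix_mobius_formula} and~\ref{lm:denom_growth}. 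The pole $-Q_k/Q_{k-1}$ has modulus at least $1$, hence lies at distance at least $1-R_d$ from $I_d$.

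\emph{Third, a fixed point of a full branch does not by itself exclude eigenvalues $e^{i\theta}\neq 1$.} The complete argument runs as follows.
\begin{itemize}
\item If $\mathcal Lg=e^{i\theta}g$, then $u:=g/h$ satisfies $u\circ T_d=e^{-i\theta}u$. This holds because the normalised operator $u\mapsto \mathcal L(uh)/h$ is the $L^2(\mu_d)$-adjoint of the isometric Koopman operator.
\item By ergodicity, $\abs{u}$ is a nonzero constant.
\item For every full branch $\abs{a}\ge A_0$, one gets $u\circ\psi_a=e^{i\theta}u$.
\item Choose such a branch whose fixed point lies in the interior of an atom, where $u$ is continuous. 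Evaluating there forces $e^{i\theta}=1$.
\end{itemize}
Alternatively, you can simply invoke mixing of $(T_d,\mu_d)$.
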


For $\alpha\in\Ocal_d$, define the inverse branch
\[
h_\alpha(z):=\frac{1}{z+\alpha}.
\]
Then $T_d\circ h_\alpha=\mathrm{id}$ on the appropriate cylinder domain.

\begin{lm}[{\cite[Lemma 2.3]{KLL25}}]\label{lm:Id_ball_ext}
There exists $R_d\in(0,1)$ such that $I_d\subset B(0,R_d)$.
Moreover, we can take $R_d\le \sqrt{15/16}$ for all $d\in\{1,2,3,7,11\}$.
\end{lm}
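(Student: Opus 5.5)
The plan is to compute the circumradius of the nearest-integer fundamental domain directly. By Lemma~\ref{lm:Id_convex} and the Dirichlet construction recalled there, $I_d$ is the closed Voronoi cell of $0$ for the lattice $\Ocal_d\subset\C$, that is, $I_d=\set{z:\ \abs{z}\le\abs{z-\alpha}\ \forall\alpha\in\Ocal_d}$. Since $I_d$ is a compact convex polygon, the function $\abs{z}$ attains its maximum over $I_d$ at a vertex. So it suffices to locate the vertices and compute $r_d^2:=\max_{v}\abs{v}^2$ over the vertices $v$ of $I_d$. We then take $R_d:=\sqrt{15/16}$, which works because we will find $r_d^2<15/16$ in every case, and hence $I_d\subset\overline{B(0,r_d)}\subset B(0,R_d)$ with $R_d<1$.

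I would split into the two lattice shapes. For $d=1,2$ we have $\Ocal_d=\Z+\Z i\sqrt d$, which is a rectangular lattice. The Voronoi cell is then the rectangle $[-\tfrac12,\tfrac12]\times[-\tfrac{\sqrt d}{2},\tfrac{\sqrt d}{2}]$, whose vertices satisfy $\abs{v}^2=\tfrac14+\tfrac d4$. This gives $r_1^2=\tfrac12$ and $r_2^2=\tfrac34$.

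For $d=3,7,11$ we have $\Ocal_d=\Z+\Z\omega$ with $\omega=(1+i\sqrt d)/2$. The triangle $0,1,\omega$ has squared side lengths $1$, $\abs{\omega}^2=\abs{\omega-1}^2=(1+d)/4$. For $d\ge3$ this triangle is isosceles with base $1$ and legs $\ge1$, so it is acute. Its lattice translates and reflections form the Delaunay triangulation, and the vertices of the hexagonal cell $I_d$ are the circumcenters of the six Delaunay triangles at $0$. It therefore suffices to compute the circumcenter of $0,1,\omega$. It lies on $\mathrm{Re}\,z=\tfrac12$, and equating distances to $0$ and to $\omega$ gives $\mathrm{Im}\,z=(d-1)/(4\sqrt d)$. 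Hence
\[
r_d^2=\frac14+\frac{(d-1)^2}{16d},
\]
which equals $\tfrac13$, $\tfrac47$ and $\tfrac9{11}$ for $d=3,7,11$ respectively.

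The largest value is $\tfrac9{11}<\tfrac{15}{16}$, which concludes the argument. The only step requiring care is the justification that the hexagon's vertices are exactly these circumcenters, which is where acuteness is used. If one prefers to avoid Delaunay language, I would instead verify directly that $I_d$ lies in the intersection of the half-planes $\abs{z}\le\abs{z-\alpha}$ for the six neighbours $\alpha\in\set{\pm1,\pm\omega,\pm(\omega-1)}$. That intersection is the hexagon with the stated vertices, and the claimed bound already follows from this containment, since we only need an upper bound on $\abs{z}$.
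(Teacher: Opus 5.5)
Your argument is correct, but it does not follow the paper, which gives no proof of this lemma and simply imports it from \cite[Lemma 2.3]{KLL25}.

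Your steps are sound:
\begin{itemize}
\item You identify $I_d$ with the closed Voronoi cell of $0$ in $\Ocal_d$. This is consistent with the paper's choice of the rectangle-type domain for $d=1,2$ and the hexagon-type domain for $d=3,7,11$.
\item You use that the convex function $\abs{z}$ is maximized at a vertex.
\item Your circumcenter $\tfrac12+i\,\tfrac{d-1}{4\sqrt d}$ is right.
\item Your values $r_d^2=\tfrac12,\tfrac34,\tfrac13,\tfrac47,\tfrac{9}{11}$ are right; for $d=3,7,11$ they are $r_d^2=\tfrac{(d+1)^2}{16d}$.
\item As you note, containment of $I_d$ in the four (resp.\ six) bisector half-planes is all that an upper bound requires.
\end{itemize}

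The citation buys brevity. Your computation buys a self-contained proof with sharp constants. It shows the uniform bound can be improved to $\sqrt{9/11}$, so $\sqrt{15/16}$ is not sharp for the five domains used here. Note that $\tfrac{15}{16}=\tfrac14+\tfrac{11}{16}$ is the squared half-diagonal of the $1\times\tfrac{\sqrt{11}}{2}$ rectangle, which suggests the cited bound is meant to cover rectangle-type domains as well. Your computation also makes clear that $R_d<1$ is the covering-radius inequality behind the Euclidean property: the same formula gives $r_{19}^2=\tfrac{25}{19}>1$.
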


We use the following terminology from \cite[Section 2.1]{NT25}. Let \(\Delta\subset\C\) be connected and compact with \(\overline{\mathrm{int}(\Delta)}=\Delta\), and assume that \(\Delta\) is convex. Let \(I\subset\C\) be countable, equipped with a size function \(i\mapsto \abs{i}\in[1,\infty)\), and let \(\Phi=\{\phi_i\}_{i\in I}\) be a family of self-maps of \(\Delta\). We call \(\Phi\) a conformal IFS on \(\Delta\) if it satisfies the following conditions:
\begin{enumerate}[label=\textup{(A\arabic*)}, ref=\textup{(A\arabic*)}, leftmargin=3.0em]
\item\label{cond:A1} for distinct \(i,j\in I\), we have
\(
\phi_i(\mathrm{int}(\Delta))\cap \phi_j(\mathrm{int}(\Delta))=\varnothing;
\)
\item\label{cond:A2} there exists a connected open set \(\widetilde\Delta\subset\C\) with \(\Delta\Subset \widetilde\Delta\) such that each \(\phi_i\) extends to a \(C^1\) conformal diffeomorphism
\(
\widetilde\phi_i:\widetilde\Delta\to \widetilde\phi_i(\widetilde\Delta)\subset \widetilde\Delta;
\)
\item\label{cond:A3} there exist \(m\in\N\) and \(\gamma\in(0,1)\) such that for all \((i_1,\dots,i_m)\in I^m\) and all \(z\in\Delta\), we have
\[
\bigl|D(\phi_{i_1}\circ\cdots\circ\phi_{i_m})(z)\bigr|\le \gamma;
\]
\item\label{cond:A4} the set \(\bigcup_{i\in I}\bigl(\phi_i(\Delta)\cap\partial\Delta\bigr)\) is countable.
\end{enumerate}
If \(\# I=\infty\), then \textup{\ref{cond:A3}} yields a coding map \(\pi:I^\N\to\Delta\), defined by
\[
\pi(\omega):=\lim_{n\to\infty}\phi_{\omega_1}\circ\cdots\circ\phi_{\omega_n}(\zeta),
\]
where \(\zeta\in\Delta\) is arbitrary; the limit is independent of \(\zeta\). We write
\[
L(\Phi):=\pi(I^\N),
\qquad
L'(\Phi):=\set{x\in L(\Phi):\pi^{-1}(x)\ \text{is a singleton}}.
\]
Finally, \(\Phi\) is called \(2\)-decaying if it satisfies \textup{\ref{cond:A4}} and there exist constants \(0<C_1\le C_2<\infty\) such that
\[
\frac{C_1}{\abs{i}^2}\le \abs{D\phi_i(z)}\le \frac{C_2}{\abs{i}^2}
\qquad(i\in I,\ z\in\Delta).
\]

\begin{thm}[Consequence of {\cite[Theorem 2.1]{NT25}}]\label{thm:hirst_external}
Let \(\Delta\subset\C\) be connected and compact with \(\overline{\mathrm{int}(\Delta)}=\Delta\), and assume that \(\Delta\) is convex.
Let \(\Phi=\{\phi_i\}_{i\in I}\) be a \(2\)-decaying conformal IFS on \(\Delta\), and let \(\pi:I^\N\to\Delta\) be its coding map.
For an infinite \(S\subset I\), set
\[
\abs{S}:=\set{\abs{i}:i\in S}\subset[1,\infty)
\]
and define
\[
\tau(\abs{S}):=\inf\Bigl\{t>0:\ \sum_{r\in \abs{S}} r^{-t}<\infty\Bigr\}.
\]
Define
\[
F_{\Phi}(S)
:=
\set{x\in L'(\Phi):\ \pi^{-1}(x)=\{\omega(x)\},\ \omega(x)\in S^\N,\ \abs{\omega_n(x)}\to\infty},
\]
and, for \(f:\N\to[1,\infty)\),
\[
F_{\Phi}(S,f)
:=
\set{x\in F_{\Phi}(S):\ \abs{\omega_n(x)}\le f(n)\ \forall n\ge 1}.
\]
Then
\[
\dim_H F_{\Phi}(S)=\frac{\tau(\abs{S})}{2}.
\]
Moreover, if
\[
\set{i\in S:\abs{i}\le f(n)}\neq\varnothing \qquad(n\ge1)
\]
and \(f(n)\to\infty\), then
\[
\dim_H F_{\Phi}(S,f)=\frac{\tau(\abs{S})}{2}.
\]
\end{thm}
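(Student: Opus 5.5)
This statement is a repackaging of \cite[Theorem~2.1]{NT25} in the notation of the present paper, so the plan is to match hypotheses and definitions rather than to redo the dimension theory. First, I will check that the standing assumptions coincide with those of \cite[Section~2.1]{NT25}. These are: a compact connected convex $\Delta$ with $\overline{\mathrm{int}(\Delta)}=\Delta$, an index set $I$ with size function $\abs{\cdot}\ge1$, and a family $\Phi$ satisfying \ref{cond:A1}--\ref{cond:A4} together with the two-sided derivative bounds $C_1\abs{i}^{-2}\le\abs{D\phi_i}\le C_2\abs{i}^{-2}$. I will then observe that $F_\Phi(S)$ and $F_\Phi(S,f)$ are exactly the sets that \cite{NT25} considers. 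The only small point is that we restrict to $L'(\Phi)$, i.e.\ to points with a unique coding. If \cite{NT25} is phrased on codings, i.e.\ with $\pi(\{\omega\in S^\N:\abs{\omega_n}\to\infty\})$, I will note the following. By \ref{cond:A1} and \ref{cond:A4}, the set of points with more than one coding lies in the countable union $\bigcup_{\mathbf w}\phi_{\mathbf w}\bigl(\bigcup_i\phi_i(\Delta)\cap\partial\Delta\bigr)$ over finite words $\mathbf w$. This set is countable, so removing it does not change Hausdorff dimension when the dimension is positive. When $\tau=0$, the upper bound alone suffices, since both sides are then $0$.

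Second, I will reconcile the exponent. In \cite{NT25} the critical exponent is presumably written via $\sum_{i\in S}\abs{i}^{-t}$, i.e.\ counted with multiplicity over $S$, whereas here $\tau(\abs S)$ sums over the set of distinct values $\abs S$. The two agree once each value $r\in\abs S$ is attained by only boundedly many (indeed polynomially many) indices. For the application to $I\subset\Ocal_d$ with the usual modulus, the number of $\alpha$ with $\abs\alpha=r$ is $O(r^\varepsilon)$ for every $\varepsilon>0$, by the divisor bound for norms in $\Ocal_d$. Hence $\sum_{i\in S}\abs i^{-t}$ and $\sum_{r\in\abs S}r^{-t}$ have the same abscissa of convergence. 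I will state this explicitly. If the external theorem is already formulated with the set $\abs S$, this step is vacuous.

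Third, for the restricted set I will verify the hypothesis of \cite{NT25}. Their condition that each $n$ has some admissible symbol, i.e.\ $\{i\in S:\abs i\le f(n)\}\neq\varnothing$, ensures $F_\Phi(S,f)\ne\varnothing$. Combined with $f(n)\to\infty$, it is exactly what their Theorem~2.1 requires to give $\dim_H F_\Phi(S,f)=\tau/2$.

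The main obstacle I expect is bookkeeping rather than mathematics: making sure that the definitions of $L'(\Phi)$ and of the coding map, the treatment of multiply coded points, and the definition of the convergence exponent in \cite{NT25} literally match those above. The multiplicity issue in the second step is the one place where an actual argument, the divisor bound, may be needed.
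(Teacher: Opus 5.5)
Your route is the paper's route: the paper gives no argument for this statement beyond importing \cite[Theorem~2.1]{NT25}. Your treatment of multiply coded points (this is Lemma~\ref{lm:coding_countable}) and of the cutoff hypothesis is the bookkeeping that citation leaves implicit. The step that does not go through is your second one.

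First, polynomial multiplicity is not enough. If each value $r\in\abs{S}$ has about $r^{c}$ preimages, then $\sum_{i\in S}\abs{i}^{-t}$ is comparable to $\sum_{r\in\abs{S}}r^{c-t}$, so the two abscissae differ by $c$. You need $\ll_\varepsilon r^{\varepsilon}$ preimages for every $\varepsilon>0$.

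Second, and more seriously, that bound is a property of $\Ocal_d$ with the modulus. Nothing in \ref{cond:A1}--\ref{cond:A4} or in the $2$-decay bounds enforces it. Here is an example.
\begin{itemize}
\item Take $\Delta=[0,1]^2$. For each $k\ge1$ let $I$ contain the $4^k$ points $2^k e^{2\pi\sqrt{-1}\,j/4^k}$ with $0\le j<4^k$.
\item Let each $\phi_i$ with $\abs{i}=2^k$ be an affine contraction of ratio $c\,4^{-k}$ onto one of a family of pairwise disjoint closed squares in $\mathrm{int}(\Delta)$; this is possible for small $c$.
\item This is a $2$-decaying conformal IFS with $C_1=C_2=c$, and the union in \ref{cond:A4} is empty.
\item Since $\abs{I}=\{2^k\}_{k\ge1}$, we get $\tau(\abs{I})=0$.
\item Now let $\omega_n$ range over all $4^{k_n}$ indices of size $2^{k_n}$, with $k_n\to\infty$ and $k_n=o(n)$. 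All points are uniquely coded and the digits tend to infinity, so the resulting limit set lies in $F_\Phi(I)$.
\item Its pressure is $\frac1n\sum_{j\le n}\bigl(u\log c+(1-u)k_j\log 4\bigr)$, which tends to $+\infty$ for every $u<1$. Proposition~\ref{thm:nonaut_bowen} then gives $\dim_H F_\Phi(I)\ge 1$. This matches the element-count exponent $2$, not $\tau(\abs{I})/2=0$.
\end{itemize}

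So the displayed statement, with $\tau(\abs{S})$ summing over distinct values, fails for general $2$-decaying conformal IFSs as defined here. No matching of definitions with \cite{NT25} can prove it in that generality. In particular, your fallback (``if the external theorem is already formulated with the set $\abs{S}$, this step is vacuous'') is not available.

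What your argument does establish is the statement under the extra hypothesis that each size value has $\abs{i}^{o(1)}$ preimages. This covers $I=\Ocal_d^{\ge A_0}$ with the modulus, which is the only case the paper uses. There your divisor-bound step is exactly Lemma~\ref{lm:tau_absS}. The paper proves that lemma separately and applies it right after invoking this theorem in the proof of Theorem~\ref{thm:HirstMain}. You should state that restriction explicitly, or phrase the abstract result with the element-count exponent $\inf\{t>0:\sum_{i\in S}\abs{i}^{-t}<\infty\}$. Do not present step two as a general reconciliation.
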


We also use a non-autonomous variant of the above notion. By a \emph{non-autonomous conformal IFS} on $\Delta$ we mean a sequence $\boldsymbol{\Phi}=(\Phi_n)_{n\ge1}$, $\Phi_n=\{\phi_i^{(n)}\}_{i\in \mathcal I_n}$, such that the analogues of \textup{\ref{cond:A1}} and \textup{\ref{cond:A2}} hold at each level with the same compact set $\Delta$ and a common extension domain $\widetilde\Delta$, there is a uniform bounded-distortion constant for all finite compositions across levels, and there is a uniform contraction bound for sufficiently long compositions. Its limit set is denoted by $\Lambda(\boldsymbol{\Phi})$, and we call $\boldsymbol{\Phi}$ \emph{subexponentially bounded} if
\[
\lim_{n\to\infty}\frac{1}{n}\log \#\mathcal I_n=0.
\]

\begin{prop}[{\cite[Theorem 3.2]{NT25}}]\label{thm:nonaut_bowen}
Let $\boldsymbol{\Phi}=(\Phi_n)_{n\ge1}$ and $\Phi_n=\{\phi_i^{(n)}\}_{i\in \mathcal I_n}$ be a non-autonomous conformal IFS on a compact convex set $\Delta$ as defined above. Let $\Lambda(\boldsymbol{\Phi})$ be its limit set.
Assume
\[
\lim_{n\to\infty}\frac{1}{n}\log \#\mathcal I_n=0.
\]
For $u\ge0$, define
\[
Z_n^{\boldsymbol{\Phi}}(u):=\sum_{\omega\in \mathcal I_1\times\cdots\times \mathcal I_n}\|D\phi_\omega\|_\infty^{u},
\qquad
P_{\boldsymbol{\Phi}}(u):=\liminf_{n\to\infty}\frac{1}{n}\log Z_n^{\boldsymbol{\Phi}}(u),
\]
where
\(
\phi_\omega:=\phi_{\omega_1}^{(1)}\circ\cdots\circ\phi_{\omega_n}^{(n)}.
\) If $P_{\boldsymbol{\Phi}}(u)\ge 0$, then
\[
\dim_H\Lambda(\boldsymbol{\Phi})\ge u.
\]
\end{prop}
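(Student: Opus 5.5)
The plan is to prove the lower bound $\dim_H\Lambda(\boldsymbol{\Phi})\ge u$ with the mass distribution principle. I will build Gibbs-like probability measures on $\Lambda(\boldsymbol{\Phi})$ whose cylinder masses are controlled by $\|D\phi_\omega\|_\infty^{s}$ for every $s<u$. Throughout I use $Z^{(k)}_n(s):=\sum_{\tau\in\mathcal I_{k+1}\times\cdots\times\mathcal I_{k+n}}\|D\phi_\tau\|_\infty^{s}$ for the shifted partition sums. I also use the uniform bounded-distortion constant $K$ and the uniform contraction. The latter says there exist $m$ and $\gamma<1$ such that every composition of length $n$ satisfies $\|D\phi_\tau\|_\infty\le C\gamma^{n/m}$.

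\emph{Step 1: strict positivity of pressure below $u$.} Fix $s<u$. Contraction gives $\|D\phi_\omega\|_\infty^{s}\ge C^{s-u}\gamma^{-(u-s)n/m}\|D\phi_\omega\|_\infty^{u}$ for $\omega$ of length $n$. Hence $P_{\boldsymbol\Phi}(s)\ge P_{\boldsymbol\Phi}(u)+\tfrac{u-s}{m}\log(1/\gamma)=:2\eta>0$. Because $P$ is defined by a $\liminf$, this yields $Z_k(s)\ge e^{\eta k}$ for all $k\ge k_0$.

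\emph{Step 2: quasi-multiplicativity and the measure.} Bounded distortion and the chain rule give $\|D\phi_{\omega\tau}\|_\infty\asymp_K\|D\phi_\omega\|_\infty\|D\phi_\tau\|_\infty$. Summing, we get $Z_{k+n}(s)\asymp Z_k(s)\,Z^{(k)}_n(s)$ with constants depending only on $K$ and $s$. Since $\#\mathcal I_n<\infty$ by subexponential boundedness, all the sums are finite. For each $N$ define $\nu_N$ on $\Lambda(\boldsymbol\Phi)$ by assigning mass $\|D\phi_\omega\|_\infty^{s}/Z_N(s)$ to a point of each level-$N$ cylinder $\phi_\omega(\Delta)$, and let $\nu$ be a weak-$*$ limit point. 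Quasi-multiplicativity gives, for every word $\omega$ of length $k\le N$,
\[
\nu_N(\phi_\omega(\Delta))=\frac{\|D\phi_\omega\|_\infty^{s}\,Z^{(k)}_{N-k}(s)}{Z_N(s)}\asymp\frac{\|D\phi_\omega\|_\infty^{s}}{Z_k(s)}\le C\,\|D\phi_\omega\|_\infty^{s}e^{-\eta k}.
\]
This bound passes to $\nu$, after slightly enlarging cylinders to handle boundaries. The boundaries are harmless thanks to \ref{cond:A1}, convexity of $\Delta$, and the fact that $\nu$ is supported on the limit set.

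\emph{Step 3: balls; this is the main obstacle.} Fix a ball $B(x,r)$. For each $\omega$ in the support, stop at the first $k$ with $\operatorname{diam}\phi_{\omega|k}(\Delta)\le r$. The difficulty is that the stopping cylinders need not have diameter comparable to $r$, because non-autonomous contraction ratios at a single level can be arbitrarily small. A naive volume count therefore fails. What I would do is count the \emph{parents} instead. Each parent has diameter $>r$ and, by bounded distortion plus convexity of $\Delta$, contains a ball of radius $\gtrsim r$. Parents at a fixed level $k-1$ have disjoint interiors, so only $O(1)$ of them meet $B(x,2r)$. Each parent has at most $\#\mathcal I_k$ children, and each child has $\|D\phi\|_\infty^{s}\lesssim r^{s}$. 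Step 2 then gives mass at most $C\,\#\mathcal I_k\,e^{-\eta k}r^{s}$ from level $k$.

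Finally I sum over the possible stopping levels $k$. Uniform contraction forces $k\gtrsim$ nothing useful from below, but every $k$ contributes the factor $\#\mathcal I_k e^{-\eta k}$. Subexponential growth $\tfrac1k\log\#\mathcal I_k\to0$ makes $\sum_k\#\mathcal I_k e^{-\eta k}<\infty$. The total is $\nu(B(x,r))\le C' r^{s}$, and the mass distribution principle gives $\dim_H\Lambda(\boldsymbol\Phi)\ge s$. Letting $s\uparrow u$ finishes the proof. The place I expect to need the most care is the ball-counting in Step 3, specifically the claim that a cylinder of diameter $>r$ contains a ball of radius $\asymp r$ uniformly across levels. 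I would derive this from \ref{cond:A2} via the Koebe distortion on the common extension domain $\widetilde\Delta\Supset\Delta$.
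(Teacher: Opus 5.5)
The paper gives no proof of this proposition. It imports it from \cite[Theorem~3.2]{NT25}, as the lower-bound half of a Bowen-type formula for subexponentially bounded non-autonomous conformal systems. Your proposal is instead a self-contained mass-distribution proof of that lower bound, and its architecture is sound:
\begin{itemize}
\item Step~1 correctly converts $P_{\boldsymbol\Phi}(u)\ge 0$ into $Z_k(s)\ge e^{\eta k}$ for \emph{every} large $k$. This is why the $\liminf$ version of the pressure is the right one, since your stopping levels vary from point to point.
\item Quasi-multiplicativity needs bounded distortion only for the prefix $\phi_\omega$. It gives mass at most $K^{s}\|D\phi_\omega\|_\infty^{s}/Z_{|\omega|}(s)$ to the cylinder of $\omega$.
\item The parent-counting step, where $e^{-\eta k}$ absorbs the factor $\#\mathcal I_k$, is exactly where subexponential boundedness enters.
\end{itemize}
Compared with the citation, your route shows which hypotheses are actually used: distortion for compositions starting at level~$1$; contraction uniform over starting levels, so that $\|D\phi_\omega\|_\infty\le C\gamma^{|\omega|/m}$; finiteness of each $\mathcal I_n$; and convexity of $\Delta$ for the geometry. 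That is worthwhile, since the paper describes the non-autonomous setting only informally. The citation also supplies the matching upper bound, which the paper never needs.

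One step needs a stronger justification than the one you give. That each parent of diameter $>r$ contains \emph{some} disk of radius $\gtrsim r$ does not bound the number of parents meeting $B(x,2r)$. A parent of diameter much larger than $r$ can have its inscribed disk far from $B(x,r)$.

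What you need is localized fatness: there is $c>0$ such that for every word $w$, every $p\in\phi_w(\Delta)$ and every $0<\rho\le\diam\phi_w(\Delta)$, the set $\phi_w(\Delta)\cap B(p,\rho)$ contains a disk of radius $c\rho$. This follows from the ingredients you name. Write $p=\phi_w(q)$.
\begin{itemize}
\item By convexity, the homothety centred at $q$ with ratio $\rho/(2\|D\phi_w\|_\infty\diam\Delta)\le\tfrac12$ maps an inscribed disk $B(\zeta,\delta)\subset\Delta$ into $\Delta\cap B(q,\rho/\|D\phi_w\|_\infty)$.
\item By the mean value inequality on the convex set $\Delta$, $\phi_w$ sends that set into $B(p,\rho)$.
\item Koebe's $1/4$ theorem together with bounded distortion shows that the image of the small disk contains a disk of radius at least $\delta\rho/(8K\diam\Delta)$.
\end{itemize}
For each parent $P$, taking $\rho=r$ at a point of $P\cap B(x,r)$ then produces a disk of radius $cr$ inside $P\cap B(x,2r)$. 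These disks are pairwise disjoint because cylinders of a fixed level have disjoint interiors, so the packing bound gives the $O(1)$ count.

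Separately, the boundary discussion in Step~2 can simply be dropped. Build the measure $\tilde\nu$ on the compact product $\prod_n\mathcal I_n$, where cylinders are clopen so the weak-$*$ limit inherits the cylinder bounds, and set $\nu:=\pi_*\tilde\nu$. Since $\pi^{-1}(B(x,r))$ is covered by the stopping cylinders, only upper bounds on symbolic cylinder masses are ever used.
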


\begin{lm}[{\cite[Lemma 2.3 and 2.4]{NT25}}]\label{lm:NT25_geom}
Let \(\Phi=\{\phi_i\}_{i\in I}\) be a conformal IFS on \(\Delta\), and let \(\widetilde\Delta\) be the connected open set from \textup{\ref{cond:A2}}
Assume there exist \(\zeta\in\Delta\) and \(\delta>0\) such that \(B(\zeta,\delta)\subset \mathrm{int}(\Delta)\).
Then there exist constants \(K_{\mathrm{dist}}=K_{\mathrm{dist}}(\Delta,\widetilde\Delta)\ge1\) and
\(C_{\diam}=C_{\diam}(\Delta,\widetilde\Delta,\zeta,\delta)\ge1\) such that for every \(n\ge1\), every word \(\omega=(i_1,\dots,i_n)\in I^n\), and
\[
\phi_\omega:=\phi_{i_1}\circ\cdots\circ\phi_{i_n},
\]
the following hold:
\begin{enumerate}[label=\textup{(\roman*)}, leftmargin=3.0em]
\item\label{it:koebe_dist}
\(
\sup_{z,w\in\Delta}\dfrac{\abs{D\phi_\omega(z)}}{\abs{D\phi_\omega(w)}}\le K_{\mathrm{dist}}.
\)
\item\label{it:diam_deriv}
\(
C_{\diam}^{-1}\abs{D\phi_\omega(\zeta)}
\le
\diam\bigl(\phi_\omega(\Delta)\bigr)
\le
C_{\diam}\abs{D\phi_\omega(\zeta)}.
\)
\item\label{it:inscribed_disk}
\(
\phi_\omega(\Delta)\ \text{contains a Euclidean disk of radius at least}\
\frac{\delta}{3K_{\mathrm{dist}}}\,\abs{D\phi_\omega(\zeta)}.
\)
\end{enumerate}
\end{lm}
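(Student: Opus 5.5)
The three claims rest on two facts: each $\phi_\omega$ is injective and conformal on all of $\widetilde\Delta$, and $\Delta$ sits compactly inside $\widetilde\Delta$. With those, Koebe's theorems give constants that do not depend on $\omega$ or $n$. First, by \ref{cond:A2} every finite composition $\widetilde\phi_\omega=\widetilde\phi_{i_1}\circ\cdots\circ\widetilde\phi_{i_n}$ is again a $C^1$ conformal diffeomorphism of $\widetilde\Delta$ onto a subset of $\widetilde\Delta$. So it is univalent holomorphic on $\widetilde\Delta$, or antiholomorphic; in the latter case I pass to the complex conjugate, which changes neither $\abs{D\phi_\omega}$ nor any diameter or inscribed radius. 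Fix $r:=\tfrac12\dist(\Delta,\C\setminus\widetilde\Delta)>0$. For every $z\in\Delta$ the disk $B(z,2r)$ lies in $\widetilde\Delta$, and $\widetilde\phi_\omega$ is univalent there.

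For \ref{it:koebe_dist} I apply the Koebe distortion theorem on each disk $B(z,2r)$. It gives $\abs{D\phi_\omega(w)}/\abs{D\phi_\omega(z)}\le K_0$ for $\abs{w-z}\le r$, with $K_0$ an absolute constant (the ratio $r/2r=\tfrac12$ is fixed). Since $\Delta$ is convex, any two points $z,w\in\Delta$ are joined by the segment $[z,w]\subset\Delta$. This segment can be cut into at most $N:=\lceil \diam(\Delta)/r\rceil$ pieces of length $\le r$. Chaining the local estimate along the pieces gives $K_{\mathrm{dist}}:=K_0^{N}$, which depends only on $\Delta$ and $\widetilde\Delta$.

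For the upper bound in \ref{it:diam_deriv} I integrate along segments, again using convexity:
\[
\abs{\phi_\omega(z)-\phi_\omega(w)}\le \abs{z-w}\sup_{\Delta}\abs{D\phi_\omega}\le K_{\mathrm{dist}}\diam(\Delta)\abs{D\phi_\omega(\zeta)}.
\]
The lower bound in \ref{it:diam_deriv} follows from \ref{it:inscribed_disk}: a set containing a disk of radius $\rho$ has diameter at least $2\rho$. So $C_{\diam}$ can be taken as $\max\{K_{\mathrm{dist}}\diam\Delta,\ 3K_{\mathrm{dist}}/(2\delta)\}$.

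For \ref{it:inscribed_disk} I argue directly with a degree/boundary argument, using that $\phi_\omega$ is injective on $\overline{B(\zeta,\delta)}\subset\Delta$. Write $\Lambda:=\abs{D\phi_\omega(\zeta)}$, so that $\abs{D\phi_\omega}\ge \Lambda/K_{\mathrm{dist}}$ on $\Delta$. I want to show that every boundary point $z$ with $\abs{z-\zeta}=\delta$ satisfies $\abs{\phi_\omega(z)-\phi_\omega(\zeta)}\ge \delta\Lambda/(3K_{\mathrm{dist}})$. Granting this, the Jordan curve $\phi_\omega(\partial B(\zeta,\delta))$ encloses the disk $B(\phi_\omega(\zeta),\delta\Lambda/(3K_{\mathrm{dist}}))$, and the interior of that curve is $\phi_\omega(B(\zeta,\delta))\subset\phi_\omega(\Delta)$.

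To obtain the boundary bound, I will first try the inverse map. It is conformal on the image with $\abs{D\phi_\omega^{-1}}\le K_{\mathrm{dist}}/\Lambda$. If the image of the boundary came closer than $\delta\Lambda/(3K_{\mathrm{dist}})$ to $\phi_\omega(\zeta)$, I would pull back the straight segment joining the two image points. Its preimage has length $<\delta/3$ yet must join $\zeta$ to the circle of radius $\delta$, which is a contradiction.

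The gap in this argument is that the pulled-back segment must stay inside $\phi_\omega(\Delta)$, so that the derivative bound applies along it. I expect this to be the main obstacle, and it is where the precise constant $3$ has to be earned. To handle it I would take the first exit time of the segment from $\phi_\omega(B(\zeta,\delta))$ and run the contradiction up to that time. As a fallback, the Koebe $\tfrac14$-theorem applied on $B(\zeta,\delta)$ gives an inscribed disk of radius $\delta\Lambda/4$, which is comparable and suffices for every later use of \ref{it:inscribed_disk} up to adjusting constants.
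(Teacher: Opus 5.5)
The paper does not prove this lemma; it imports it from \cite[Lemmas 2.3 and 2.4]{NT25}, so there is no in-paper argument to compare against. Your proposal is a correct, self-contained proof along standard Koebe lines. Koebe distortion on the disks $B(z,2r)\subset\widetilde\Delta$, chained along segments of the convex set $\Delta$, gives (i) with $K_{\mathrm{dist}}=K_0^N$ depending only on $\Delta,\widetilde\Delta$. The mean-value bound gives the upper half of (ii), and (iii) gives the lower half. What your route buys is independence from the external reference, at the cost of redoing classical univalent-function estimates.

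In (iii) you undersell your own argument. The step you flag is not a real obstacle, and the first-exit idea closes it completely. Write $p=\phi_\omega(\zeta)$ and $q=\phi_\omega(z_0)$ with $\abs{z_0-\zeta}=\delta$, and let $t^*$ be the first time the segment $\sigma(t)=p+t(q-p)$ leaves the open set $\phi_\omega(B(\zeta,\delta))$. Then $\sigma([0,t^*))\subset\phi_\omega(B(\zeta,\delta))$. By compactness, $\sigma(t^*)\in\phi_\omega(\overline{B(\zeta,\delta)})\setminus\phi_\omega(B(\zeta,\delta))=\phi_\omega(\partial B(\zeta,\delta))$. The pulled-back path $\phi_\omega^{-1}\circ\sigma$ on $[0,t^*]$ therefore runs from $\zeta$ to the circle $\partial B(\zeta,\delta)$ with length at most $(K_{\mathrm{dist}}/\Lambda)\abs{q-p}$. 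This forces $\abs{q-p}\ge\delta\Lambda/K_{\mathrm{dist}}$, so you in fact get an inscribed disk of radius $\delta\Lambda/K_{\mathrm{dist}}$. The factor $3$ is slack, not something that has to be earned.

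Your Koebe $\tfrac14$ fallback is not a weakening either. It gives radius $\delta\Lambda/4$, and $\delta\Lambda/4\ge\delta\Lambda/(3K_{\mathrm{dist}})$ as soon as $K_{\mathrm{dist}}\ge 4/3$. Your $K_{\mathrm{dist}}=K_0^N$ already satisfies this with the Koebe value $K_0=12$, and in any case one may always enlarge $K_{\mathrm{dist}}$. So either route yields (iii) exactly as stated, with no adjustment of constants in the later uses in Lemmas \ref{lm:2decay} and \ref{lm:bounded_overlap}.
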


\begin{lm}[{\cite[Lemma 2.5]{NT25}}]\label{lm:coding_countable}
Let \(\Phi=\{\phi_i\}_{i\in I}\) be an infinite conformal IFS on \(\Delta\) satisfying \textup{\ref{cond:A4}}, and let \(\pi:I^\N\to\Delta\) be its coding map.
Then \(L(\Phi)\setminus L'(\Phi)\) is countable.
Equivalently, the set of points in \(\pi(I^\N)\) with more than one address under \(\pi\) is countable.
\end{lm}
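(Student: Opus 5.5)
The plan is to pin every point with two addresses to a single boundary event at the first index where the two addresses differ. That event produces a point of $\partial\Delta$ lying in some $\phi_k(\Delta)$. Condition \ref{cond:A4} says there are only countably many such points, and pushing them forward along the countably many finite words then gives countability of $L(\Phi)\setminus L'(\Phi)$.

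\emph{Step 1 (overlaps lie on image boundaries).} Let $i\neq j$ and $y\in\phi_i(\Delta)\cap\phi_j(\Delta)$; I will show $y\in\phi_i(\partial\Delta)$. By \ref{cond:A2}, $\phi_i$ is the restriction of a conformal diffeomorphism $\widetilde\phi_i$, so $\phi_i(\mathrm{int}(\Delta))$ is open. Since $\overline{\mathrm{int}(\Delta)}=\Delta$ and $\phi_j$ is continuous, $\phi_j(\Delta)\subset\overline{\phi_j(\mathrm{int}(\Delta))}$. Suppose $y\in\phi_i(\mathrm{int}(\Delta))$. Then the open set $\phi_i(\mathrm{int}(\Delta))$ is a neighbourhood of a point in the closure of $\phi_j(\mathrm{int}(\Delta))$, so it meets $\phi_j(\mathrm{int}(\Delta))$. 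This contradicts \ref{cond:A1}. Because $\phi_i$ is injective, $y=\phi_i(z)$ with $z\in\Delta\setminus\mathrm{int}(\Delta)=\partial\Delta$.

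\emph{Step 2 (locating the boundary point).} Let $x\in L(\Phi)$ have two addresses $\omega\neq\omega'$, and let $n$ be the first index with $\omega_n\neq\omega'_n$. Write $w=(\omega_1,\dots,\omega_{n-1})$ and $\sigma$ for the shift. By continuity of the maps and the definition of $\pi$,
\[
x=\phi_w\bigl(\pi(\sigma^{n-1}\omega)\bigr)=\phi_w\bigl(\pi(\sigma^{n-1}\omega')\bigr).
\]
Since $\phi_w$ is injective, $y:=\pi(\sigma^{n-1}\omega)=\pi(\sigma^{n-1}\omega')$. This point lies in $\phi_{\omega_n}(\Delta)\cap\phi_{\omega'_n}(\Delta)$, and we have $y=\phi_{\omega_n}(z)$ with $z:=\pi(\sigma^n\omega)$. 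Step~1 and injectivity of $\phi_{\omega_n}$ give $z\in\partial\Delta$. Also $z=\phi_{\omega_{n+1}}(\pi(\sigma^{n+1}\omega))\in\phi_{\omega_{n+1}}(\Delta)$. Hence
\[
z\in Z:=\bigcup_{k\in I}\bigl(\phi_k(\Delta)\cap\partial\Delta\bigr),
\]
which is countable by \ref{cond:A4}.

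\emph{Step 3 (counting).} From Step 2, $x=\phi_w\circ\phi_{\omega_n}(z)$ with $z\in Z$. Therefore
\[
L(\Phi)\setminus L'(\Phi)\subset\bigcup_{k\ge0}\ \bigcup_{v\in I^{k}}\phi_v(Z),
\]
where $v$ ranges over finite words ($v=w\omega_n$ above). Since $I$ is countable, there are countably many finite words, and each $\phi_v(Z)$ is countable, so the union is countable.

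The only delicate point is Step 1, where one must not confuse intersecting images with intersecting interiors. This is exactly where $\overline{\mathrm{int}(\Delta)}=\Delta$ and the openness of conformal images are used. Convexity of $\Delta$ plays no role here.
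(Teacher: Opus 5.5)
Your proof is correct and complete. Choosing the first index $n$ where the two addresses differ forces $\pi(\sigma^{n}\omega)\in\partial\Delta\cap\phi_{\omega_{n+1}}(\Delta)$, which is countable by \textup{\ref{cond:A4}}, and pushing this countable set forward along the countably many finite words gives the result.

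The paper gives no proof of this lemma; it imports it directly from \cite[Lemma 2.5]{NT25}, so there is no in-paper argument to compare against. Your first-disagreement-and-boundary argument is the natural, standard proof. You are also right that it uses only \textup{\ref{cond:A1}}, \textup{\ref{cond:A2}} (through openness of $\phi_i(\mathrm{int}(\Delta))$ and injectivity), \textup{\ref{cond:A4}}, the identity $\overline{\mathrm{int}(\Delta)}=\Delta$, and the existence of $\pi$, and not convexity.
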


\begin{lm}[{\cite[Lemma 2.4(a)]{ENN23}}]\label{lm:denom_growth}
Fix $d\in\{1,2,3,7,11\}$ and let $x\in I_d\setminus\mathcal N_d$.
Let $(Q_n(x))_{n\ge 0}$ be the denominator sequence of convergents associated with the digit expansion of $x$
(equivalently, the bottom-row coefficients in the standard matrix product for the digits).
Then
\[
\abs{Q_n(x)}<\abs{Q_{n+1}(x)}\qquad(n\ge 1).
\]
\end{lm}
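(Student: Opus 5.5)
The plan is to reduce the inequality to a one-step statement about a backward ratio, and then control that ratio with a forward/backward region argument in the spirit of the natural extension of \cite{ENN23}.

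\textbf{Step 1: the recursion.} The convention $h_\alpha(z)=1/(z+\alpha)$ gives $x=1/(a_1+1/(a_2+\cdots))$. The denominators therefore satisfy
\[
Q_{-1}=0,\qquad Q_0=1,\qquad Q_{n+1}=a_{n+1}Q_n+Q_{n-1}.
\]
Set $r_n:=Q_{n-1}/Q_n$; this requires $Q_n\neq0$, which will be obtained along the way by induction. Then
\[
r_0=0,\qquad r_n=\frac{1}{a_n+r_{n-1}},\qquad \frac{Q_{n+1}}{Q_n}=a_{n+1}+r_n .
\]
So the claim $\abs{Q_n}<\abs{Q_{n+1}}$ for $n\ge1$ is equivalent to
\[
\abs{a_{n+1}+r_n}>1 \quad\text{for } n\ge1,
\]
equivalently $\abs{r_{n+1}}<1$. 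The case $n=1$ reads $\abs{a_2+1/a_1}>1$.

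\textbf{Step 2: where the pair lives.} The pair
\[
\bigl(T_d^n x,\ r_n\bigr)=\bigl(T_d^nx,\ [0;a_n,\dots,a_1]\bigr)
\]
is exactly the forward/backward coordinate pair of the natural extension constructed in \cite{ENN23}. I would establish the following by induction on $n\ge1$. There is a finite partition of $I_d$ into regions $V_1,\dots,V_J$, each a finite union of $T_d$-images of digit cylinders. There are compact sets $W_1,\dots,W_J\subset\overline{B(0,1)}$. Whenever $T_d^nx\in V_j$, we have $r_n\in W_j$.

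The inductive step uses two facts. First, $T_d^{n+1}x=T_d(T_d^nx)$ lands in a region determined by $V_j$ and the digit $a_{n+1}$. Second, $r_{n+1}=1/(a_{n+1}+r_n)$. One must then check the set inclusion
\[
\frac{1}{a+W_j}\subset W_{j'}
\]
for every digit $a$ admissible from $V_j$, where $V_{j'}$ is the region containing $T_d(\langle a\rangle\cap V_j)$. This is the standard finite-range structure of nearest-integer algorithms; the sets $W_j$ are bounded by finitely many circular arcs and lines.

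\textbf{Step 3: conclude.} Once Step 2 is in place, it remains to show
\[
\abs{a+w}>1 \quad\text{for all } w\in W_j \text{ and all digits } a \text{ admissible from } V_j,
\]
with strict inequality. For large $\abs{a}$ this is immediate, since $W_j\subset\overline{B(0,1)}$. Only the finitely many small digits remain, namely those with
\[
\abs{a}\le 1+\max_j\sup_{w\in W_j}\abs{w}.
\]
For these, the admissibility constraints are exactly what keeps $-a$ at distance more than $1$ from $W_j$. This is a finite check for each $d\in\{1,2,3,7,11\}$.

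A crude bound is not enough. From Lemma~\ref{lm:Id_ball_ext} one only gets $\abs{a}\ge 1/R_d-R_d$, which is far too weak, so the small-digit check genuinely needs the admissibility structure.

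\textbf{Main obstacle.} I expect the main obstacle to be Step 2: identifying the explicit regions $W_j$ and verifying the inclusions for each of the five systems. This is a case-by-case geometric computation, especially for the hexagonal cases $d=3,7,11$. It must also handle the boundary issue that $T_d^nx$ may approach region edges; this is harmless because $x\notin\mathcal N_d$.

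My first attempt would be to take the $W_j$ directly as the fibres of the natural extension domain $\widehat X_d$ from \cite{ENN23}. I would then check strictness using closedness of $W_j$ and the strict inequality in the small-digit cases. If the published domains are awkward to work with, the fallback is to prove only the weaker invariant $\abs{r_n}<1$ directly by strong induction, using the explicit list of forbidden consecutive small-digit pairs for each $d$.
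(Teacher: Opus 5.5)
\textbf{The gap is in Steps 2--3, and the strictness mechanism you propose is false for $d=1$.} The paper does not prove this lemma; it imports it from \cite[Lemma~2.4(a)]{ENN23}. Your Step~1 reduction to $\abs{a_{n+1}+r_n}>1$ is correct. Everything after it is the real content of the lemma and is left undone: the regions $V_j$, the fibres $W_j$, the inclusions $1/(a+W_j)\subset W_{j'}$, the base case $1/a_1\in W_j$ when $T_d x\in V_j$, and the small-digit check.

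More seriously, you want strict inequality $\abs{a+w}>1$ on compact fibres, and this cannot hold in the Hurwitz system. There the squares $(\pm2+2i)+I_1$ lie outside the discs $B(\pm1,1)$ and $B(\pm i,1)$, so the digits $\pm2+2i$ have full cylinders.

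Fix $y\in\ip{1+i}\cap I_1^\circ$. For every $k$, put $x_k:=h_{a_1}\circ\cdots\circ h_{a_{2k+1}}(y)$ with $(a_1,\dots,a_{2k+1})=(-2+2i,\,2+2i,\,\dots,\,2+2i,\,-2+2i)$. Then $x_k$ has these digits followed by $a_{2k+2}=1+i$, and $T_1^{2k+1}x_k=y$.

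The two-step map $w\mapsto 1/\bigl(-2+2i+1/(2+2i+w)\bigr)$ has the attracting fixed point $w_*:=-(1+i)(1-1/\sqrt2)$. So $r_{2k+1}\to w_*$, and
\[
\abs{1+i+w_*}=\abs{(1+i)/\sqrt2}=1,
\qquad\text{hence}\qquad
\abs{Q_{2k+2}/Q_{2k+1}}\to 1 .
\]
Thus $\sup_{n\ge1,\,x}\abs{Q_{n}/Q_{n+1}}=1$.

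Whatever region $V_j$ contains $y$, a compact $W_j$ as in your Step~2 must contain $w_*$. Yet $1+i$ is admissible from $V_j$. So ``$\abs{a+w}>1$ for all $w\in W_j$'' fails, and no choice of partition repairs it. Closed fibres can only give $\abs{a+w}\ge1$. Strictness needs a separate argument that the attained values $r_n$ never lie on the equality arcs, for instance by propagating suitably half-open fibres through the induction. That is exactly the delicate point you would have to settle.

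\textbf{Your fallback does not avoid the problem.} The condition $\abs{r_n}<1$ is not a weaker invariant; it is the statement itself. It also cannot be propagated with bounded look-back.

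Suppose the last $L\ge1$ digits alternate $\dots,2+2i,-2+2i$, ending with $a_n=-2+2i$, and you only use $\abs{r_{n-L}}<1$. Then the permitted set for $r_n$ is the open disc obtained by applying the corresponding composition of the maps $w\mapsto1/(a+w)$ to $B(0,1)$. This disc contains $w_*$: $w_*$ is fixed by the two-step map and lies in $1/(-2+2i+B(0,1))=B\bigl((-2-2i)/7,1/7\bigr)$. Hence the disc meets $\set{w:\abs{1+i+w}<1}$.

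Every digit in this configuration follows a full digit, so no consecutive pair in it is forbidden. No finite list of forbidden pairs can therefore exclude $a_{n+1}=1+i$ here. A self-contained proof needs exact invariant regions for all five systems, together with careful boundary bookkeeping. Short of that, citing \cite{ENN23}, as the paper does, is the appropriate route.
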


\begin{lm}[{\cite[Section~3]{ENN23}}]\label{lm:prefix_mobius_formula}
Let $\mathbf{b}=(b_1,\dots,b_k)\in\Ocal_d^k$ be an admissible finite word, and define
\[
M(\mathbf{b})
:=
\begin{pmatrix}0&1\\ 1&b_1\end{pmatrix}
\cdots
\begin{pmatrix}0&1\\ 1&b_k\end{pmatrix}
=
\begin{pmatrix}P_{k-1}&P_k\\ Q_{k-1}&Q_k\end{pmatrix}.
\]
Then the inverse branch
\[
h_{\mathbf b}:=h_{b_1}\circ\cdots\circ h_{b_k}
\]
is the M\"obius transformation
\[
h_{\mathbf b}(z)=\frac{P_{k-1}z+P_k}{Q_{k-1}z+Q_k}.
\]
In particular, if $x\in \ip{\mathbf b}$, then $(Q_{k-1},Q_k)$ is the denominator pair of the convergents
associated with the prefix $(b_1,\dots,b_k)$ of the digit expansion of $x$.
\end{lm}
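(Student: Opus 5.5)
The plan is to argue by induction on $k$, using the standard correspondence between $2\times 2$ matrices and M\"obius maps. For $g=\begin{pmatrix}a&b\\ c&d\end{pmatrix}$ write $g\cdot z:=\frac{az+b}{cz+d}$. The key identities are $(gg')\cdot z=g\cdot(g'\cdot z)$ wherever both sides are defined, and $\begin{pmatrix}0&1\\ 1&\alpha\end{pmatrix}\cdot z=\frac{1}{z+\alpha}=h_\alpha(z)$. The composition rule is a routine algebraic check: substitute $g'\cdot z$ into $g$ and clear denominators.

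Iterating, we get $h_{\mathbf b}(z)=M(\mathbf b)\cdot z$, so it only remains to identify the entries of $M(\mathbf b)$. I would fix the convention $P_{-1}=1$, $P_0=0$, $Q_{-1}=0$, $Q_0=1$ together with the recurrences
\[
P_j=b_jP_{j-1}+P_{j-2},\qquad Q_j=b_jQ_{j-1}+Q_{j-2}\qquad(1\le j\le k).
\]
For $k=1$ the matrix $\begin{pmatrix}0&1\\ 1&b_1\end{pmatrix}$ is exactly $\begin{pmatrix}P_0&P_1\\ Q_0&Q_1\end{pmatrix}$. For the inductive step,
\[
\begin{pmatrix}P_{k-2}&P_{k-1}\\ Q_{k-2}&Q_{k-1}\end{pmatrix}\begin{pmatrix}0&1\\ 1&b_k\end{pmatrix}
=\begin{pmatrix}P_{k-1}&P_{k-2}+b_kP_{k-1}\\ Q_{k-1}&Q_{k-2}+b_kQ_{k-1}\end{pmatrix},
\]
which is the claimed matrix at level $k$. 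This gives the displayed formula for $h_{\mathbf b}$.

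For the final assertion, take $x\in\ip{\mathbf b}$ with $x\notin\mathcal N_d$. From $T_d(z)=1/z-a_1(z)$ we get $z=h_{a_1(z)}(T_dz)$, and by iteration $x=h_{\mathbf b}(T_d^kx)$. The convergents of $x$ are defined by the same recurrence from the same initial values, since they are the bottom-row and top-row coefficients of the standard matrix product. Hence the pair $(Q_{k-1},Q_k)$ obtained above coincides with the denominator pair of $x$ at step $k$, and likewise $h_{\mathbf b}(0)=P_k/Q_k$ is the $k$-th convergent.

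The only delicate point is bookkeeping. I need to make sure that the initial values and the order of matrix multiplication match the convention of \cite{ENN23}, and note that composing on the right corresponds to adding the newest digit. Admissibility is used only to guarantee that $h_{\mathbf b}$ really is the inverse branch on a nonempty cylinder; the denominators $Q_{k-1}z+Q_k$ do not vanish there because $h_{\mathbf b}$ takes finite values on $I_d$.
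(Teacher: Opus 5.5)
Your proof is correct. The paper gives no argument of its own and simply cites \cite[Section~3]{ENN23}, where this is the standard matrix-product identity. Your induction (the matrix action composes, $\begin{pmatrix}0&1\\1&\alpha\end{pmatrix}\cdot z=h_\alpha(z)$, and the columns are identified via the recurrences) is exactly that standard argument.

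One minor caveat concerns your closing claim that $Q_{k-1}z+Q_k\neq0$ on all of $I_d$ ``because $h_{\mathbf b}$ takes finite values on $I_d$''. This justification is circular, and the claim is not needed here, since the identity already holds as M\"obius maps on the Riemann sphere. The paper establishes the pole location separately in Lemma~\ref{lm:prefix_conformal}, using $\abs{Q_{k-1}}<\abs{Q_k}$ from Lemma~\ref{lm:denom_growth}.
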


\begin{lm}\label{lm:lattice_count}
There exist $C, c_-,c_+>0$, and $R_0\ge 1$ such that:
\begin{enumerate}[label=\textup{(\roman*)}, leftmargin=3.0em]
\item for all $R\ge1$, we have
\(
\#\set{\alpha\in\Ocal_d:\ \abs{\alpha}\le R}\le C R^2;
\)
\item for all $R\ge R_0$,
\(
c_- R^2\le \#\set{\alpha\in\Ocal_d:\ R\le \abs{\alpha}<2R}\le c_+ R^2.
\)
\end{enumerate}
\end{lm}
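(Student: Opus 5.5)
Both bounds come from comparing with Lebesgue area. $\Ocal_d$ is a lattice in $\C\cong\R^2$ with covolume $A_d:=\lambda(I_d)>0$. By properties (i)--(ii) of the fundamental set, the translates $\alpha+I_d$ for $\alpha\in\Ocal_d$ tile the plane with overlaps only on $\bigcup_\alpha(\alpha+\partial I_d')$, which is $\lambda$-null by Lemma~\ref{lm:boundary_null}. By Lemma~\ref{lm:Id_ball_ext}, $I_d\subset B(0,R_d)$ with $R_d<1$. Every count below will be turned into an area inequality using these two facts.

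For (i), fix $R\ge1$ and let $N(R):=\#\set{\alpha\in\Ocal_d:\ \abs{\alpha}\le R}$. For each such $\alpha$ we have $\alpha+I_d\subset B(0,R+1)$. Since the translates have disjoint interiors and null boundaries,
\[
N(R)\,A_d\le \lambda\bigl(B(0,R+1)\bigr)=\pi(R+1)^2\le 4\pi R^2 .
\]
So we may take $C:=4\pi/A_d$.

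For (ii), the upper bound is immediate from (i): the annulus count is at most $N(2R)\le 4C R^2$, so $c_+:=4C$ works.

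The lower bound uses the covering property instead. Every $z$ in the annulus $\set{R+1\le\abs{z}\le 2R-1}$ lies in some $\alpha+I_d$, that is, $\abs{z-\alpha}<1$. Hence $R\le\abs{\alpha}<2R$, and $\alpha$ is one of the lattice points being counted. The annulus is therefore covered by those translates, which gives
\[
\#\set{\alpha:\ R\le\abs{\alpha}<2R}\cdot A_d\ \ge\ \pi\bigl((2R-1)^2-(R+1)^2\bigr)=\pi\,(3R^2-6R).
\]
For $R\ge R_0:=4$ this is at least $\tfrac{3\pi}{4}R^2$, using $6R\le\tfrac{3}{2}R^2$ there. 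So $c_-:=3\pi/(4A_d)$ works.

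There is no serious obstacle. The only points needing care are the half-open versus closed fundamental domain, handled by the null boundary, and the choice of $R_0$, which ensures the inner annulus is nonempty with area of order $R^2$.
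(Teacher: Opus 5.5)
Your proof is correct and uses the same area-comparison idea as the paper, which packs and covers with lattice translates of a fundamental parallelogram $P$ of $\Ocal_d$. Taking the fundamental domain $I_d$ itself, with $I_d\subset B(0,R_d)$ and $R_d<1$, makes explicit the boundary error that the paper absorbs into a constant, and it yields concrete constants such as $C=4\pi/\lambda(I_d)$ and $R_0=4$.
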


\begin{proof}
(i) View $\Ocal_d\subset\C\simeq\R^2$ as a full-rank lattice and fix a fundamental parallelogram $P$
of area $\mathrm{area}(P)>0$. For any $R\ge1$, the disk $B(0,R)$ has area $\pi R^2$.
Every translate $\beta+P$ contains at most one lattice point, hence the number of lattice points in $B(0,R)$ is
at most $\mathrm{area}(B(0,R))/\mathrm{area}(P)$ plus a boundary error absorbed into a constant. Thus $\#\{\alpha\in\Ocal_d:\ \abs{\alpha}\le R\}\ll R^2$.\\
(ii) The upper bound follows from (i) by subtracting the count for radius $R$ from that for radius $2R$. For the lower bound, view $\Ocal_d\subset\R^2$ as a full-rank lattice and fix a fundamental parallelogram $P$. For large $R$, the annulus $\set{x\in\R^2:\ R\le |x|<2R}$ contains $\gg R^2$ disjoint translates of $P$, hence contains $\gg R^2$ lattice points.
\end{proof}

\begin{cor}\label{cor:tau_range}
For every infinite set $S\subset\Ocal_d$, we have $0\le \tau(S)\le 2$.
\end{cor}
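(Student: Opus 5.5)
The lower bound $\tau(S)\ge0$ is immediate: $\tau(S)$ is defined as an infimum over positive reals $t$. The content is therefore the upper bound $\tau(S)\le 2$. I will show that $\sum_{\alpha\in S_*}\abs{\alpha}^{-t}<\infty$ for every $t>2$. Then every $t>2$ lies in the set whose infimum defines $\tau(S)$, and so $\tau(S)\le 2$.

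Since $S_*\subset\Ocal_d\setminus\{0\}$, it suffices to prove convergence of the full sum $\sum_{\alpha\in\Ocal_d\setminus\{0\}}\abs{\alpha}^{-t}$ for $t>2$. I will use a dyadic decomposition based on Lemma~\ref{lm:lattice_count}(i).

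First, the nonzero elements of $\Ocal_d$ have modulus bounded below by some $\delta_d>0$, because $\Ocal_d$ is a discrete lattice. In fact $\abs{\alpha}^2$ is a positive integer norm, so $\abs{\alpha}\ge1$ for $\alpha\ne0$. Hence every nonzero $\alpha$ lies in exactly one shell $2^{j}\le\abs{\alpha}<2^{j+1}$ with $j\ge0$.

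By Lemma~\ref{lm:lattice_count}(i), the $j$-th shell contains at most $C\,2^{2(j+1)}$ points. On that shell each term satisfies $\abs{\alpha}^{-t}\le 2^{-jt}$. Therefore
\[
\sum_{\alpha\in\Ocal_d\setminus\{0\}}\abs{\alpha}^{-t}\le \sum_{j\ge0} 4C\,2^{2j}\,2^{-jt}=4C\sum_{j\ge0}2^{-j(t-2)}<\infty,
\]
since $t>2$.

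There is no real obstacle here. The only point to check is that nonzero lattice points stay away from $0$, so that the dyadic shells starting at $j=0$ cover all of $\Ocal_d\setminus\{0\}$. If one prefers not to use the integrality of the norm, discreteness alone gives $\abs{\alpha}\ge\delta_d$, and the finitely many shells with $\delta_d\le 2^j<1$ only contribute a finite amount. Infiniteness of $S$ is not needed for the bound; it only guarantees that $\tau(S)$ is the meaningful quantity appearing in Theorem~\ref{thm:HirstMain}.
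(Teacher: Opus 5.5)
Your proof is correct and follows the paper's argument: you bound the sum over $S_*$ by the full sum over $\Ocal_d\setminus\{0\}$ and split it into dyadic shells, which is valid because $\abs{\alpha}\ge 1$. Lemma~\ref{lm:lattice_count}(i) then turns it into the geometric series $\sum_k 2^{k(2-t)}$, which converges for $t>2$.
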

\begin{proof}
The lower bound is immediate.
Fix $t>2$. By dyadic decomposition and Lemma~\ref{lm:lattice_count}(i),
\[
\sum_{\alpha\in S_*}\abs{\alpha}^{-t}
\le
\sum_{\alpha\in \Ocal_d\setminus\{0\}}\abs{\alpha}^{-t}
\le
\sum_{k\ge0}\ \#\set{\alpha\in\Ocal_d:\ \abs{\alpha}<2^{k+1}}\cdot 2^{-kt}
\ \ll\
\sum_{k\ge0} 2^{2k}\,2^{-kt}
=
\sum_{k\ge0} 2^{k(2-t)}<\infty.
\]
Hence $\tau(S)\le 2$.
\end{proof}

\begin{lm}\label{lm:tau_basic}
Let $S\subset\Ocal_d$ be infinite.
\begin{enumerate}[label=\textup{(\roman*)}, leftmargin=3.0em]
\item If $F\subset\Ocal_d$ is finite, then
\[
\tau(S)=\tau(S\cup F)=\tau(S\setminus F).
\]
\item For every $c>0$ and every $t>0$,
\[
\sum_{\alpha\in S_*}\abs{\alpha}^{-t}<\infty
\quad\Longleftrightarrow\quad
\sum_{\alpha\in S_*}(\abs{\alpha}+c)^{-t}<\infty.
\]
In particular,
\[
\tau(S)=\inf\Bigl\{t>0:\ \sum_{\alpha\in S_*}(\abs{\alpha}+c)^{-t}<\infty\Bigr\}.
\]
\end{enumerate}
\end{lm}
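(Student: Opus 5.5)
Both parts reduce to elementary manipulations of the series $\sum_{\alpha\in S_*}\abs{\alpha}^{-t}$. For (i), the point is that finitely many terms never affect convergence. For (ii), we compare $\abs{\alpha}$ with $\abs{\alpha}+c$ termwise, using the fact that nonzero elements of $\Ocal_d$ are bounded away from $0$.

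For (i), let $F\subset\Ocal_d$ be finite. The sets $(S\cup F)_*$ and $S_*$ differ by finitely many elements, all of which are nonzero, so each contributes a finite term $\abs{\alpha}^{-t}$. Hence for every $t>0$,
\[
\sum_{\alpha\in (S\cup F)_*}\abs{\alpha}^{-t}<\infty
\quad\Longleftrightarrow\quad
\sum_{\alpha\in S_*}\abs{\alpha}^{-t}<\infty .
\]
The same argument applies to $(S\setminus F)_*$, which is a cofinite subset of $S_*$. Note that $S\setminus F$ is still infinite, so $\tau$ is defined on it. The three sets of admissible exponents $t$ therefore coincide, and so do their infima.

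For (ii), we need a two-sided comparison constant. Since $\Ocal_d$ is a lattice, there is $\delta_d>0$ with $\abs{\alpha}\ge\delta_d$ for all $\alpha\in\Ocal_d\setminus\{0\}$. In fact one may take $\delta_d=1$, since $\abs{\alpha}^2=N(\alpha)\in\N$. For $\alpha\in S_*$ this gives
\[
\abs{\alpha}\le \abs{\alpha}+c\le \bigl(1+c/\delta_d\bigr)\abs{\alpha}.
\]
Raising to the power $-t$ yields
\[
(1+c/\delta_d)^{-t}\abs{\alpha}^{-t}\le (\abs{\alpha}+c)^{-t}\le \abs{\alpha}^{-t}.
\]
Comparison of series of nonnegative terms then gives the stated equivalence for every $t>0$. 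Taking infima over the common set of admissible $t$ gives the alternative formula for $\tau(S)$.

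There is no real obstacle here. The only point needing care is the uniform lower bound on $\abs{\alpha}$ for $\alpha\in S_*$, which is exactly why $0$ is excluded via $S_*$. This is supplied by integrality of the norm in $\Ocal_d$.
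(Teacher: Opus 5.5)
Your proposal is correct and follows essentially the same argument as the paper. For (i) you use that finitely many nonzero terms do not affect convergence, and for (ii) you use the termwise comparison $\abs{\alpha}\le\abs{\alpha}+c\le(1+c)\abs{\alpha}$, which relies on $\abs{\alpha}^2=N(\alpha)\in\N$ for $\alpha\in S_*$.
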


\begin{proof}
    (i) For every $t>0$, the symmetric difference between $S_*$ and $(S\cup F)_*$ is finite, hence
\[
\sum_{\alpha\in S_*}\abs{\alpha}^{-t}<\infty
\quad\Longleftrightarrow\quad
\sum_{\alpha\in (S\cup F)_*}\abs{\alpha}^{-t}<\infty,
\]
and similarly for $S\setminus F$. Taking infimum over $t$ yields the claim. \\
(ii) If $\alpha\in\Ocal_d\setminus\{0\}$ then $\abs{\alpha}^2=N(\alpha)\in\N$, hence $\abs{\alpha}\ge 1$.
Thus
\[
\abs{\alpha}\le \abs{\alpha}+c\le (1+c)\abs{\alpha}\qquad(\alpha\in S_*),
\]
so $(1+c)^{-t}\abs{\alpha}^{-t}\le (\abs{\alpha}+c)^{-t}\le \abs{\alpha}^{-t}$.
Comparison of series gives the equivalence.
\end{proof}

\begin{lm}[{\cite[Theorem 13.12]{Apostol}}]\label{lm:Rd_bound}
For every $\varepsilon>0$ we have
\[
d(n)\ll_\varepsilon n^{\varepsilon}\qquad(n\ge 1),
\]
where $d(n)$ denotes the divisor function.
\end{lm}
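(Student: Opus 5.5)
This is the classical divisor bound, quoted from Apostol, so the paper simply cites it. If a self-contained argument is wanted, I would use the standard multiplicative proof.

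First, write $n=\prod_{p} p^{e_p}$. Since $d$ is multiplicative with $d(p^{e})=e+1$, we get
\[
\frac{d(n)}{n^{\varepsilon}}=\prod_{p\mid n}\frac{e_p+1}{p^{e_p\varepsilon}}.
\]
It therefore suffices to bound each local factor $(e+1)/p^{e\varepsilon}$ uniformly in $e\ge1$. The bound is taken to be $1$ for large primes and a constant depending only on $(p,\varepsilon)$ for small primes.

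Second, the large primes. Suppose $p\ge 2^{1/\varepsilon}$. Then $p^{e\varepsilon}\ge 2^{e}\ge e+1$, so every such factor is at most $1$.

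Third, the small primes. There are finitely many primes $p<2^{1/\varepsilon}$, and for each we have $p^{\varepsilon}\ge 2^{\varepsilon}$. Using $e\varepsilon\log 2\le 2^{e\varepsilon}$, i.e. $e\le 2^{e\varepsilon}/(\varepsilon\log2)$, we get
\[
\frac{e+1}{p^{e\varepsilon}}\le\frac{e+1}{2^{e\varepsilon}}\le 1+\frac{1}{\varepsilon\log 2}.
\]

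Finally, multiplying over primes gives
\[
d(n)\le \Bigl(1+\tfrac{1}{\varepsilon\log2}\Bigr)^{2^{1/\varepsilon}}\, n^{\varepsilon},
\]
where the constant depends only on $\varepsilon$. This proves the claim.

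There is no real obstacle. The only point needing care is the cutoff $p\ge 2^{1/\varepsilon}$, which ensures the large-prime factors are at most $1$, so the infinitely many primes dividing various $n$ contribute no growth.
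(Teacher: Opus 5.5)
Your argument is correct and complete. It is the standard multiplicative proof: the local factor $(e+1)/p^{e\varepsilon}$ is at most $1$ for $p\ge 2^{1/\varepsilon}$ and at most $1+1/(\varepsilon\log 2)$ for the finitely many smaller primes, which is how the cited result in Apostol is proved, and the paper itself simply quotes that theorem without giving an argument.
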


\begin{lm}\label{lm:tau_absS}
Let $S\subset\Ocal_d$ be infinite, write $S_*:=S\setminus\{0\}$, and set
\[
\abs{S}:=\set{\abs{\alpha}:\ \alpha\in S_*}\subset[1,\infty).
\]
Define
\[
\tau(\abs{S}):=\inf\Bigl\{t>0:\ \sum_{r\in \abs{S}} r^{-t}<\infty\Bigr\}.
\]
Then $\tau(\abs{S})=\tau(S)$.
\end{lm}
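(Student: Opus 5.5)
Let $S_*=S\setminus\{0\}$ and $\abs{S}=\{\abs{\alpha}:\alpha\in S_*\}$. We compare the two series $\sum_{\alpha\in S_*}\abs{\alpha}^{-t}$ and $\sum_{r\in\abs{S}}r^{-t}$ for each fixed $t>0$. The issue is that several elements of $S_*$ can share the same absolute value. The first sum counts these elements with multiplicity, while the second counts each value only once. So one inequality is trivial, and the other requires bounding the multiplicities.

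For the easy direction, every $r\in\abs{S}$ equals $\abs{\alpha}$ for at least one $\alpha\in S_*$. Choosing one such $\alpha$ for each $r$ gives an injection $\abs{S}\hookrightarrow S_*$. Hence
\[
\sum_{r\in\abs{S}}r^{-t}\le\sum_{\alpha\in S_*}\abs{\alpha}^{-t},
\]
so convergence of the $S$-series implies convergence of the $\abs{S}$-series. This gives $\tau(\abs{S})\le\tau(S)$. Note that $\abs{S}\subset[1,\infty)$ because $\abs{\alpha}^2=N(\alpha)\in\N$, as in the proof of Lemma~\ref{lm:tau_basic}(ii).

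For the reverse direction, fix $r\in\abs{S}$ and write $n=r^2\in\N$. The number of $\alpha\in\Ocal_d$ with $N(\alpha)=n$ is at most $w_d\,d(n)$, where $w_d\le 6$ is the number of units of $\Ocal_d$. This is the standard divisor bound for representations by the norm form of the ring of integers of an imaginary quadratic field. The number of such $\alpha$ equals $w_d$ times the number of principal ideals of norm $n$, which is at most $w_d\sum_{m\mid n}\chi_d(m)\le w_d\,d(n)$. Alternatively one can argue directly: for each prime $p\mid n$ there are at most two prime ideals above $p$, and this also gives an ideal count $\le d(n)$. Lemma~\ref{lm:Rd_bound} then gives, for every $\varepsilon>0$,
\[
\#\{\alpha\in S_*:\abs{\alpha}=r\}\ll_\varepsilon r^{2\varepsilon}.
\]
Therefore
\[
\sum_{\alpha\in S_*}\abs{\alpha}^{-t}\ll_\varepsilon\sum_{r\in\abs{S}}r^{-(t-2\varepsilon)}.
\]
If $t>\tau(\abs{S})$, we choose $\varepsilon>0$ small enough that $t-2\varepsilon>\tau(\abs{S})$. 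The right-hand side then converges, so $t\ge\tau(S)$. Letting $t\downarrow\tau(\abs{S})$ yields $\tau(S)\le\tau(\abs{S})$. The edge case $\tau(\abs{S})=0$ is handled in the same way, using any $t>0$ and $\varepsilon<t/2$.

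The main obstacle is the multiplicity bound $\#\{\alpha:N(\alpha)=n\}\ll w_d\,d(n)$, uniformly over the five fields. I would justify it by unique factorization in $\Ocal_d$, since these rings are Euclidean and hence PIDs. Write $\alpha$ as a unit times a product of primes. Each rational prime power $p^e\,\|\,n$ contributes at most $e+1$ choices: split primes give $e+1$ choices, and inert or ramified primes give at most one. The total count is therefore at most $w_d\prod(e+1)=w_d\,d(n)$.
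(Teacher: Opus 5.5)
Your proposal is correct and follows essentially the same route as the paper. The easy inequality $\tau(\abs{S})\le\tau(S)$ comes from comparing the two series. For the converse, you bound the number of $\alpha$ of a given modulus via unique factorization in the PID $\Ocal_d$ and then apply $d(n)\ll_\varepsilon n^\varepsilon$. The only difference is that your multiplicity bound $w_d\,d(r^2)$ is sharper than the paper's $\ll d(r^2)^2$, which the paper obtains by counting all ideal divisors of $(r^2)$. This has no effect on the conclusion.
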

\begin{proof}
Since $\sum_{r\in \abs{S}} r^{-t}\le \sum_{\alpha\in S_*}\abs{\alpha}^{-t}$ for every $t>0$, we have $\tau(\abs{S})\le \tau(S)$.
Fix $t>\tau(\abs{S})$ and choose $\varepsilon>0$ so that $t-2\varepsilon>\tau(\abs{S})$.
Grouping elements of $S_*$ by their modulus, we obtain
\[
\sum_{\alpha\in S_*}\abs{\alpha}^{-t}
\le
\sum_{r\in \abs{S}} \#\set{\alpha\in S_*:\ \abs{\alpha}=r}\, r^{-t}.
\]
For each $r\in \abs{S}$, every $\alpha\in\Ocal_d$ with $\abs{\alpha}=r$ satisfies $(\alpha)(\overline{\alpha})=(r^2)$, so $(\alpha)$ is an ideal divisor of $(r^2)$; since $\Ocal_d$ is a PID with finitely many units, it follows that
\[
\#\set{\alpha\in S_*:\ \abs{\alpha}=r}\ll d(r^2)^2\ll_\varepsilon r^{2\varepsilon},
\]
where the last bound follows from Lemma~\ref{lm:Rd_bound} applied with $\varepsilon/2$.
Hence
\[
\sum_{\alpha\in S_*}\abs{\alpha}^{-t}
\ll_\varepsilon
\sum_{r\in \abs{S}} r^{2\varepsilon}\, r^{-t}
=
\sum_{r\in \abs{S}} r^{-(t-2\varepsilon)}<\infty.
\]
Thus $t>\tau(S)$. Since $t>\tau(\abs{S})$ was arbitrary, we have $\tau(S)\le \tau(\abs{S})$.
\end{proof}

\begin{prop}[Hausdorff--Cantelli lemma, {\cite[Lem 3.10]{BernikDodson1999}}]\label{prop:hausdorff_cantelli}
Let $(X,d)$ be a metric space and let $\{\mathcal U_n\}_{n\ge1}$ be families of subsets of $X$.
Fix $s>0$. If
\[
\sum_{n=1}^{\infty}\ \sum_{U\in\mathcal U_n} \bigl(\diam U\bigr)^s<\infty,
\]
then the limsup set $\limsup_{n\to\infty}\bigcup_{U\in\mathcal U_n} U$ has $\mathcal H^s$-measure $0$.
In particular, its Hausdorff dimension is at most $s$.
\end{prop}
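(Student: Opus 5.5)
The plan is the standard covering argument: bound the $\delta$-approximate Hausdorff measure of the limsup set by tails of the convergent double series. Throughout I assume, as in every application in this paper (the families will consist of cylinder sets indexed by finite words over $\Ocal_d$), that each family $\mathcal U_n$ is countable. This assumption is needed, because $\mathcal H^s$ is defined through countable covers. Without it, an uncountable family of singletons would contribute $0$ to the sum and the statement could fail.

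Write
\[
L:=\limsup_{n\to\infty}\bigcup_{U\in\mathcal U_n}U=\bigcap_{N\ge1}\ \bigcup_{n\ge N}\ \bigcup_{U\in\mathcal U_n}U,
\qquad
\varepsilon_N:=\sum_{n\ge N}\ \sum_{U\in\mathcal U_n}(\diam U)^s .
\]
By hypothesis $\varepsilon_N\to0$ as $N\to\infty$. For every $N$, the countable family $\mathcal V_N:=\bigcup_{n\ge N}\mathcal U_n$ covers $L$. Moreover, each single term is bounded by the whole tail, so every $U\in\mathcal V_N$ satisfies $(\diam U)^s\le\varepsilon_N$, that is, $\diam U\le\varepsilon_N^{1/s}$.

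Now fix $\delta>0$ and choose $N_0$ with $\varepsilon_N^{1/s}<\delta$ for all $N\ge N_0$. For such $N$, the family $\mathcal V_N$ is an admissible $\delta$-cover of $L$, so
\[
\mathcal H^s_\delta(L)\le\sum_{U\in\mathcal V_N}(\diam U)^s=\varepsilon_N .
\]
Letting $N\to\infty$ gives $\mathcal H^s_\delta(L)=0$ for every $\delta>0$. Since $\mathcal H^s(L)=\lim_{\delta\to0}\mathcal H^s_\delta(L)$, it follows that $\mathcal H^s(L)=0$.

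For the dimension statement, recall that $\dim_H L=\inf\{t\ge0:\mathcal H^t(L)=0\}$. We have just shown $\mathcal H^s(L)=0$, so $\dim_H L\le s$.

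No step is a genuine obstacle. The only point requiring care is the countability of the families noted in the first paragraph, which holds in all later uses.
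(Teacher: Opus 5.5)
Your argument is correct and is the standard covering proof behind the cited Bernik--Dodson lemma; the paper itself only quotes the result and gives no proof. In that proof the tails $\bigcup_{n\ge N}\mathcal U_n$ are $\delta$-covers of the limsup set, and their $s$-sums are the vanishing tails of the convergent series. Your countability caveat is right: it is implicit in the source, which is stated for a sequence of sets. Note, though, that the paper never actually invokes this proposition, so your remark about ``later uses'' holds only vacuously.
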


\begin{prop}[Mass distribution principle, {\cite[Theorem 8.17]{Mattila95}}]\label{prop:mdp}
Let $\nu$ be a finite Borel measure on $\C$ and let $E\subset\C$ be Borel with $\nu(E)>0$.
Fix $s>0$. If there exist $C>0$ and $r_0>0$ such that
\[
\nu(B(x,r))\le C r^s \qquad \text{for all }x\in E,\ 0<r<r_0,
\]
then $\dim_H(E)\ge s$.
\end{prop}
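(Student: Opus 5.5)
The plan is the standard covering argument: I will show that $\mathcal H^s(E)>0$, which gives $\dim_H E\ge s$ at once. The main step is to compare an arbitrary fine cover of $E$ with the measure $\nu$ using the local bound $\nu(B(x,r))\le Cr^s$.

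Fix $0<\delta<r_0$ and let $\{U_i\}_{i\ge1}$ be any countable cover of $E$ with $\diam U_i\le\delta$.
\begin{enumerate}[label=(\arabic*), leftmargin=3.0em]
\item Discard every $U_i$ with $U_i\cap E=\varnothing$.
\item For each remaining $U_i$, pick $x_i\in U_i\cap E$.
\item Suppose $\diam U_i>0$. Then $U_i\subset B(x_i,r_i)$ with $r_i:=2\diam U_i$. For this radius to lie in $(0,r_0)$ I need $2\delta<r_0$, so I take $\delta<r_0/2$. The hypothesis then gives $\nu(U_i)\le\nu(B(x_i,r_i))\le C2^s(\diam U_i)^s$.
\item Suppose instead $\diam U_i=0$, so $U_i=\{x_i\}$. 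Then $\nu(\{x_i\})\le\nu(B(x_i,r))\le Cr^s$ for every $0<r<r_0$. Letting $r\to0$ gives $\nu(U_i)=0$, so the same bound holds in this case too.
\end{enumerate}

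A measurability point needs handling here, since the $U_i$ need not be Borel. I will avoid it by working throughout with the Borel balls $B(x_i,r_i)$, or with the closed sets $\overline{U_i}$, which have the same diameters. Countable subadditivity of $\nu$ then gives
\[
0<\nu(E)\le\sum_i\nu\bigl(B(x_i,r_i)\bigr)\le C2^s\sum_i(\diam U_i)^s .
\]

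Taking the infimum over all such covers gives $\mathcal H^s_\delta(E)\ge \nu(E)/(C2^s)$ for every $\delta<r_0/2$. Letting $\delta\to0$ gives $\mathcal H^s(E)\ge\nu(E)/(C2^s)>0$. Hence $\dim_H E\ge s$, because $\mathcal H^{t}(E)=\infty$ for $t<s$ is not even needed: positivity of $\mathcal H^s$ already forces $\dim_H E\ge s$ by the definition of Hausdorff dimension.

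The only delicate points are the radius bookkeeping (using $2\diam U_i$ so that the ball centred at $x_i$ covers $U_i$, and restricting $\delta$ so the hypothesis applies) and the degenerate singleton case. Neither is a real obstacle.
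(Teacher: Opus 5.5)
Your proof is correct. It is the standard covering argument for the mass distribution principle, and the paper gives no proof of its own, only the citation to Mattila. One cosmetic point: in your displayed chain, an index with $\diam U_i=0$ should contribute $\nu(\{x_i\})=0$, as you showed in step (4), and not $\nu(B(x_i,0))$, because $B(x_i,0)=\varnothing$ does not cover $U_i$.
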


\section{Borel--Bernstein and applications}\label{sec:BBproof}
For $t\ge1$ define the first-digit threshold set
\[
A(t):=\set{z\in I_d\setminus\mathcal N_d:\ \abs{a_1(z)}\ge t}.
\]

\begin{cor}\label{cor:t2}
There exist constants $0<c_1\le c_2<\infty$ such that for all $t\ge1$,
\[
\frac{c_1}{t^{2}}
\ \le\
\mu_d\Bigl(\set{z\in I_d\setminus\mathcal N_d:\ \abs{a_1(z)}\ge t}\Bigr)
\ \le\
\frac{c_2}{t^{2}}.
\]
\end{cor}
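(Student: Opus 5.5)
The plan is to derive the two-sided bound from the asymptotic in Lemma~\ref{lm:tail}, handling large and bounded $t$ separately. Write
\[
G(t):=\mu_d\bigl(\set{z\in I_d\setminus\mathcal N_d:\ \abs{a_1(z)}\ge t}\bigr).
\]
By Lemma~\ref{lm:tail}, $\mu_d(\abs{a_1}>t)=H_d t^{-2}+O(t^{-3})$ with $H_d>0$. So there is $t_0\ge1$ such that
\[
\tfrac12 H_d t^{-2}\le \mu_d(\abs{a_1}>t)\le 2H_d t^{-2}\qquad (t\ge t_0).
\]

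\emph{Large $t$.} To pass from the strict to the non-strict inequality, use the sandwich
\[
\set{\abs{a_1}>t}\subset\set{\abs{a_1}\ge t}\subset\set{\abs{a_1}>t/2}.
\]
For $t\ge 2t_0$ this gives
\[
\tfrac12 H_d t^{-2}\le G(t)\le 2H_d (t/2)^{-2}=8H_d t^{-2}.
\]

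\emph{Bounded $t\in[1,2t_0]$.} For the upper bound, $G(t)\le 1\le (2t_0)^2 t^{-2}$. For the lower bound, $G$ is nonincreasing, so
\[
G(t)\ge G(2t_0)\ge \tfrac12 H_d(2t_0)^{-2}\ge \tfrac12 H_d(2t_0)^{-2}\,t^{-2}.
\]
This uses $t\ge1$, hence $t^{-2}\le1$.

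Combining the two ranges, one can take
\[
c_1=\tfrac12 H_d(2t_0)^{-2},\qquad c_2=\max\{8H_d,\,(2t_0)^2\}.
\]

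The only step needing care is the positivity of $G$ on the bounded range. Monotonicity reduces it to the single value $G(2t_0)$, which the tail asymptotic already bounds below. The rest is bookkeeping.
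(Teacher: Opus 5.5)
Your proof is correct and follows the same route as the paper: you use the tail asymptotic of Lemma~\ref{lm:tail} for large $t$, then monotonicity and a finite adjustment on a bounded range of $t$. Your sandwich $\{\abs{a_1}>t\}\subset\{\abs{a_1}\ge t\}\subset\{\abs{a_1}>t/2\}$ makes explicit the passage from the strict to the non-strict inequality, which the paper's one-line proof leaves implicit.
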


\begin{proof}
Lemma~\ref{lm:tail} gives $\mu_d(\abs{a_1}>t)=H_d t^{-2}+O(t^{-3})$ for large $t$.
Monotonicity in $t$ and a finite adjustment on $[1,t_0]$ yield the claim.
\end{proof}

\begin{lm}\label{lm:mix-threshold}
With $\alpha,\rho$ as in Proposition~\ref{prop:mix-cyl}, for all $s,t\ge1$ and all $n\ge0$,
\[
\Bigl|\,
\mu_d\bigl(A(s)\cap T_d^{-(n+1)}A(t)\bigr)-\mu_d(A(s))\mu_d(A(t))
\,\Bigr|
\ \le\
\alpha\,\rho^{n}\,\mu_d(A(s))\mu_d(A(t)).
\]
\end{lm}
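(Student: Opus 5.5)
The plan is to write each threshold set as a countable disjoint union of one-letter cylinders and then sum the cylinder mixing estimate of Proposition~\ref{prop:mix-cyl} over pairs. For $t\ge1$ let
\[
\mathcal D(t):=\set{b\in\Ocal_d:\ \abs{b}\ge t,\ \ip{b}\neq\varnothing}.
\]
Then $A(t)=\bigsqcup_{b\in\mathcal D(t)}\ip{b}$, since on $I_d\setminus\mathcal N_d$ the first digit is single-valued. Distinct one-letter cylinders are pairwise disjoint, and the union is countable. Consequently
\[
T_d^{-(n+1)}A(t)=\bigsqcup_{c\in\mathcal D(t)}T_d^{-(n+1)}\ip{c},
\]
again a disjoint union, because preimages preserve disjointness.

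Next I would expand both sides by countable additivity of $\mu_d$:
\[
\mu_d\bigl(A(s)\cap T_d^{-(n+1)}A(t)\bigr)=\sum_{b\in\mathcal D(s)}\sum_{c\in\mathcal D(t)}\mu_d\bigl(\ip{b}\cap T_d^{-(n+1)}\ip{c}\bigr),
\]
and
\[
\mu_d(A(s))\mu_d(A(t))=\sum_{b}\sum_{c}\mu_d(\ip{b})\mu_d(\ip{c}).
\]
Both double series converge absolutely, with sums at most $1$, so I can subtract them term by term. The triangle inequality then bounds the left-hand side of the lemma by
\[
\sum_{b,c}\Bigl|\mu_d\bigl(\ip{b}\cap T_d^{-(n+1)}\ip{c}\bigr)-\mu_d(\ip{b})\mu_d(\ip{c})\Bigr|.
\]

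Each term is handled by Proposition~\ref{prop:mix-cyl} with $k=m=1$, $\mathbf b=(b)$ and $\mathbf c=(c)$. The shift there is $n+k=n+1$, which is exactly the exponent in the lemma, so each term is at most $\alpha\rho^n\mu_d(\ip{b})\mu_d(\ip{c})$. Summing over $b$ and $c$ and resumming by additivity gives $\alpha\rho^n\mu_d(A(s))\mu_d(A(t))$, which is the claim.

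There is no real obstacle. The only points needing care are the measure-zero bookkeeping, namely that cylinders are defined modulo $\mathcal N_d$ and that $\lambda(\mathcal N_d)=0$ by Lemma~\ref{lm:null_Nd} so $\mu_d(\mathcal N_d)=0$, and the justification for interchanging the countable sums, which is covered by the absolute convergence noted above. Inadmissible digits contribute empty cylinders, so including or excluding them changes nothing.
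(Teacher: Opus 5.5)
Your proposal is correct and follows the paper's approach: write the threshold sets as unions of one-letter cylinders and sum the $k=m=1$ case of Proposition~\ref{prop:mix-cyl} over pairs of digits. The only difference is bookkeeping. The paper truncates to the finite unions $A_B(s)$ and $A_C(t)$ and then lets $B,C\to\infty$ using continuity from below, whereas you pass directly to the countable double sum and justify this by absolute convergence.
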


\begin{proof}
For $B\ge s$ and $C\ge t$ define the finite unions
\[
A_B(s):=\bigcup_{\;s\le \abs{b}\le B}\ip{b},
\qquad
A_C(t):=\bigcup_{\;t\le \abs{c}\le C}\ip{c}.
\]
Then $A_B(s)\uparrow A(s)$ and $A_C(t)\uparrow A(t)$ as $B,C\to\infty$.
For fixed $B,C$, apply Proposition~\ref{prop:mix-cyl} with $k=m=1$ and sum over the finite index sets to obtain
\[
\Bigl|\,
\mu_d\bigl(A_B(s)\cap T_d^{-(n+1)}A_C(t)\bigr)-\mu_d(A_B(s))\mu_d(A_C(t))
\,\Bigr|
\ \le\
\alpha\,\rho^{n}\,\mu_d(A_B(s))\mu_d(A_C(t)).
\]
Letting $B,C\to\infty$ and using continuity from below yields the claim.
\end{proof}

\begin{lm}\label{lm:var_crit}
Let $(X,\mu)$ be a probability space and let $(A_n)_{n\ge1}$ be measurable sets with $p_n:=\mu(A_n)$.
Define
\[
S_N:=\sum_{n=1}^N \1_{A_n},
\qquad
s_N:=\mathbb E_\mu[S_N]=\sum_{n=1}^N p_n.
\]
Assume there exists $C_0<\infty$ such that for every $n\ge1$,
\[
\sum_{k\ge1}\Bigl|\mu(A_n\cap A_{n+k})-p_n p_{n+k}\Bigr|\ \le\ C_0\,p_n.
\]
Then for all $N\ge1$ we have $\mathrm{Var}_\mu(S_N)\le (1+2C_0)\,s_N$.
Moreover, if $\sum_{n=1}^\infty p_n=\infty$ and $(N_j)_{j\ge1}$ satisfies $s_{N_j}\ge j^2$, then
\[
S_{N_j}(x)\ \ge\ \tfrac12 s_{N_j}\ \to\ \infty
\qquad\text{for }\mu\text{-a.e.\ }x.
\]
In particular, $\mu(\limsup_{n\to\infty}A_n)=1$.
\end{lm}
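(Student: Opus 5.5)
The plan is to follow the standard second-moment argument, in two stages: first bound the variance by expanding $S_N^2$, then run Chebyshev along the subsequence $N_j$ with Borel--Cantelli.

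For the variance bound, I write
\[
\mathrm{Var}_\mu(S_N)=\sum_{n=1}^N\bigl(p_n-p_n^2\bigr)+2\sum_{1\le n<m\le N}\bigl(\mu(A_n\cap A_m)-p_np_m\bigr).
\]
The diagonal part is at most $s_N$. For the off-diagonal part I set $m=n+k$ with $k\ge1$ and bound each term by its absolute value. Extending the inner sum over $k$ to all $k\ge1$ is legitimate because every term is nonnegative. The hypothesis then gives
\[
\sum_{n=1}^N\sum_{k\ge1}\bigl|\mu(A_n\cap A_{n+k})-p_np_{n+k}\bigr|\le C_0\sum_{n=1}^N p_n=C_0 s_N,
\]
so $\mathrm{Var}_\mu(S_N)\le(1+2C_0)s_N$.

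For the almost-sure statement, assume $\sum p_n=\infty$, so $s_N\to\infty$. The $N_j$ with $s_{N_j}\ge j^2$ can be chosen increasing, since $s_N$ is nondecreasing and unbounded. Chebyshev's inequality gives
\[
\mu\bigl(S_{N_j}<\tfrac12 s_{N_j}\bigr)\le\mu\bigl(|S_{N_j}-s_{N_j}|>\tfrac12 s_{N_j}\bigr)\le\frac{4(1+2C_0)s_{N_j}}{s_{N_j}^2}\le\frac{4(1+2C_0)}{j^2}.
\]
This bound is summable in $j$, so the first Borel--Cantelli lemma shows that $\mu$-a.e.\ $x$ satisfies $S_{N_j}(x)\ge\tfrac12 s_{N_j}$ for all sufficiently large $j$. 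Hence $S_{N_j}(x)\to\infty$ almost surely. Since $S_N(x)$ counts the indices $n\le N$ with $x\in A_n$, this means $x$ lies in infinitely many $A_n$, so $\mu(\limsup_n A_n)=1$.

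The argument contains no real obstacle. The only points needing care are two. In the variance step, the constant $C_0$ must enter only through the one-sided sums over $k\ge1$, which is where the factor $2$ in $1+2C_0$ comes from. In the second step, Chebyshev must be applied along the subsequence $N_j$ to obtain summability, rather than to all $N$.
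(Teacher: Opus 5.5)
Your proposal is correct and follows essentially the same route as the paper. Both expand the variance into diagonal and off-diagonal parts, bound the off-diagonal part by $2C_0 s_N$ via the one-sided hypothesis, and then apply Chebyshev along $(N_j)$ with the summable bound $4(1+2C_0)/j^2$ and the first Borel--Cantelli lemma.
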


\begin{proof}
Write $Y_n:=\1_{A_n}-p_n$. Then $S_N-s_N=\sum_{n=1}^N Y_n$ and
\[
\mathrm{Var}_\mu(S_N)
=
\sum_{n=1}^N \mathbb E_\mu[Y_n^2]
+
2\sum_{1\le i<j\le N}\mathbb E_\mu[Y_iY_j].
\]
Since $\mathbb E_\mu[Y_n^2]=p_n(1-p_n)\le p_n$ and $\mathbb E_\mu[Y_iY_j]=\mu(A_i\cap A_j)-p_ip_j$, we have
\begin{align*}
\mathrm{Var}_\mu(S_N)
&\le
\sum_{n=1}^N p_n
+
2\sum_{n=1}^N\sum_{k=1}^{N-n}\Bigl|\mu(A_n\cap A_{n+k})-p_np_{n+k}\Bigr|\\
&\le
\sum_{n=1}^N p_n
+
2C_0\sum_{n=1}^N p_n
=
(1+2C_0)\,s_N.
\end{align*}
Assume $\sum_{n=1}^\infty p_n=\infty$ and let $(N_j)_{j\ge1}$ satisfy $s_{N_j}\ge j^2$.
By Chebyshev's inequality and the variance bound,
\[
\mu\Bigl(\bigl|S_{N_j}-s_{N_j}\bigr|\ge \tfrac12 s_{N_j}\Bigr)
\le
\frac{4\,\mathrm{Var}_\mu(S_{N_j})}{s_{N_j}^2}
\le
\frac{4(1+2C_0)}{s_{N_j}}
\le
\frac{4(1+2C_0)}{j^2}.
\]
Since $\sum_{j\ge1} j^{-2}<\infty$, the Borel--Cantelli lemma yields
$|S_{N_j}(x)-s_{N_j}|<\tfrac12 s_{N_j}$ for all sufficiently large $j$, for $\mu$-a.e.\ $x$.
Thus $S_{N_j}(x)\ge \tfrac12 s_{N_j}\to\infty$ for $\mu$-a.e.\ $x$, and in particular $x\in A_n$ for infinitely many $n$.
Hence $\mu(\limsup_{n\to\infty}A_n)=1$.
\end{proof}

\begin{prop}\label{prop:strongBC_sep}
Let $(n_j)_{j\ge1}$ be strictly increasing and let $(k_j)_{j\ge1}\subset\N$.
For each $j$, let $C_j\subset I_d$ be a measurable union of $k_j$-cylinders and assume the separation condition
\[
n_{j+1}\ge n_j+k_j\qquad(j\ge1).
\]
Define events $A_j:=T_d^{-n_j}(C_j)$ and $p_j:=\mu_d(C_j)$.
Assume there exist constants $\alpha>0$ and $\rho\in(0,1)$ such that for all $i<j$,
\[
\bigl|\mu_d(C_i\cap T_d^{-(n_j-n_i)}C_j)-p_ip_j\bigr|
\le
\alpha\,\rho^{\,n_j-n_i-k_i}\,p_ip_j.
\]
Then:
\begin{enumerate}[label=(\roman*), leftmargin=3.0em]
\item If $\sum_{j} p_j<\infty$ then $\mu_d(\limsup_{j\to\infty} A_j)=0$.
\item If $\sum_{j} p_j=\infty$ then $\mu_d(\limsup_{j\to\infty} A_j)=1$ and, with $S_N:=\sum_{j\le N}\1_{A_j}$ and $s_N:=\sum_{j\le N}p_j$, we have $\mathrm{Var}_{\mu_d}(S_N)\ll s_N$.
\end{enumerate}
\end{prop}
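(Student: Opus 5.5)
Part (i) is the easy half of Borel--Cantelli, and (ii) is deduced from Lemma~\ref{lm:var_crit} applied to the events $A_j=T_d^{-n_j}(C_j)$ on the probability space $(I_d,\mu_d)$.

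For (i), invariance of $\mu_d$ (Lemma~\ref{lm:acim}) gives $\mu_d(A_j)=\mu_d(C_j)=p_j$. Hence $\sum_j\mu_d(A_j)<\infty$, and the first Borel--Cantelli lemma yields $\mu_d(\limsup_j A_j)=0$.

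For (ii), fix $i<j$ and rewrite the correlation using invariance:
\[
\mu_d(A_i\cap A_j)=\mu_d\bigl(T_d^{-n_i}(C_i\cap T_d^{-(n_j-n_i)}C_j)\bigr)=\mu_d\bigl(C_i\cap T_d^{-(n_j-n_i)}C_j\bigr).
\]
The hypothesis then bounds $\abs{\mu_d(A_i\cap A_j)-p_ip_j}$ by $\alpha\rho^{n_j-n_i-k_i}p_ip_j\le \alpha\rho^{n_j-n_i-k_i}p_i$, using $p_j\le1$. To verify the hypothesis of Lemma~\ref{lm:var_crit}, write $j=i+\ell$. Since $(n_j)$ is strictly increasing and $n_{i+1}\ge n_i+k_i$, we get
\[
n_{i+\ell}-n_i-k_i\ge n_{i+\ell}-n_{i+1}\ge \ell-1 .
\]
Therefore
\[
\sum_{\ell\ge1}\abs{\mu_d(A_i\cap A_{i+\ell})-p_ip_{i+\ell}}\le \alpha p_i\sum_{\ell\ge1}\rho^{\ell-1}=\frac{\alpha}{1-\rho}\,p_i,
\]
so the assumption of Lemma~\ref{lm:var_crit} holds with $C_0=\alpha/(1-\rho)$. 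That lemma then gives $\mathrm{Var}_{\mu_d}(S_N)\le(1+2C_0)s_N$. In the divergence case it also gives $\mu_d(\limsup_j A_j)=1$, choosing $N_j$ with $s_{N_j}\ge j^2$, which is possible since $s_N\to\infty$.

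The only step needing care is the index bookkeeping, namely that the exponent $n_{i+\ell}-n_i-k_i$ grows at least linearly in $\ell$. This follows from separation at the first step together with strict monotonicity afterwards. The argument never uses the cylinder structure of $C_j$ beyond the assumed mixing bound.
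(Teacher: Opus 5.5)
Your proposal is correct and follows the paper's proof essentially step for step. Part (i) is the first Borel--Cantelli lemma. For (ii), you use the same invariance identity and sum the mixing bound over $j>i$ to verify Lemma~\ref{lm:var_crit} with $C_0=\alpha/(1-\rho)$; your estimate $n_{i+\ell}-n_i-k_i\ge \ell-1$ is the paper's observation that the exponent is nonnegative and strictly increasing in $j$.
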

\begin{proof}
Part (i) is the (first) Borel--Cantelli lemma.
For (ii), note that for $i<j$,
\[
\mu_d(A_i\cap A_j)=\mu_d(C_i\cap T_d^{-(n_j-n_i)}C_j).
\]
Fix $i$ and sum the mixing estimate over $j>i$.
Since $n_{j+1}\ge n_j+k_j\ge n_j+1$, the integer exponent $n_j-n_i-k_i$ is nonnegative and strictly increasing in $j$, hence
\[
\sum_{j>i}\bigl|\mu_d(A_i\cap A_j)-p_ip_j\bigr|
\le
\alpha p_i\sum_{m\ge 0}\rho^{m}
\le
\frac{\alpha}{1-\rho}p_i.
\]
Thus Lemma~\ref{lm:var_crit} holds and yields $\mu_d(\limsup A_j)=1$ and $\mathrm{Var}_{\mu_d}(S_N)\ll s_N$.
\end{proof}

\begin{proof}[Proof of Theorem~\ref{thm:BB}]
Fix $d\in\{1,2,3,7,11\}$ and set $C_n:=A(u_n)\subset I_d$.
Define
\[
A_n:=T_d^{-(n-1)}(C_n)=\set{z\in I_d\setminus\mathcal N_d:\ \abs{a_n(z)}\ge u_n}.
\]
Then $E_d(u)=\limsup_{n\to\infty}A_n$ modulo $\mathcal{N}_d$ (cf.\ Remark~\ref{rmk:digit_domain}).
By Lemma~\ref{lm:mu_md}, it suffices to prove the zero--one law for $\mu_d$.
Let $p_n:=\mu_d(A_n)=\mu_d(C_n)$ (by invariance).
By Corollary~\ref{cor:t2}, $p_n\asymp u_n^{-2}$.
If $\sum_{n}u_n^{-2}<\infty$, then $\sum_n p_n<\infty$ and the first Borel--Cantelli lemma yields
$\mu_d(\limsup_{n\to\infty}A_n)=0$.
Assume $\sum_{n}u_n^{-2}=\infty$, so $\sum_n p_n=\infty$.
For $i<j$, by invariance,
\[
\mu_d(A_i\cap A_j)
=
\mu_d\bigl(C_i\cap T_d^{-(j-i)}C_j\bigr).
\]
Applying Lemma~\ref{lm:mix-threshold} with $s=u_i$, $t=u_j$ and $n=j-i-1$ gives
\[
\bigl|\mu_d(A_i\cap A_j)-p_ip_j\bigr|
\le
\alpha\,\rho^{\,j-i-1}\,p_ip_j.
\]
Summing over $j>i$ and using $p_j\le 1$, we obtain
\[
\sum_{j>i}\bigl|\mu_d(A_i\cap A_j)-p_ip_j\bigr|
\le
\frac{\alpha}{1-\rho}\,p_i.
\]
Thus Lemma~\ref{lm:var_crit} applies and yields $\mu_d(\limsup_{n\to\infty}A_n)=1$.
Finally, Lemma~\ref{lm:mu_md} transfers the conclusion from $\mu_d$ to $m_d$.
\end{proof}

\begin{proof}[Proof of Theorem~\ref{thm:cylBC_intro}]
For each $j\ge1$ set $C_j:=\ip{\mathbf{b}^{(j)}}$ and $A_j:=T_d^{-n_j}(C_j)$.
Then $C_j$ is a (single) $k_j$-cylinder and the hypothesis $n_{j+1}\ge n_j+k_j$ is exactly the separation condition in Proposition~\ref{prop:strongBC_sep}.
Let $p_j:=\mu_d(C_j)$.
For $i<j$, Proposition~\ref{prop:mix-cyl} gives
\[
\bigl|\mu_d(C_i\cap T_d^{-(n_j-n_i)}C_j)-p_ip_j\bigr|
\le
\alpha\,\rho^{\,n_j-n_i-k_i}\,p_ip_j.
\]
Therefore Proposition~\ref{prop:strongBC_sep} applies.
If $\sum_j p_j<\infty$ then $\mu_d(\limsup A_j)=0$, while if $\sum_j p_j=\infty$ then $\mu_d(\limsup A_j)=1$ and $\mathrm{Var}_{\mu_d}(S_N)\ll \sum_{j\le N}p_j$.
\end{proof}

\begin{proof}[Proof of Theorem~\ref{thm:levy}]
Set
\[
\varphi(z):=
\begin{cases}
-\log\abs{z}, & z\neq 0,\\
0, & z=0.
\end{cases}
\]
By Lemma~\ref{lm:mu_md}, we have $\mu_d\asymp\lambda$ on $I_d$.
Moreover,
\[
\int_{B(0,\varepsilon)}-\log\abs{z}\,d\lambda(z)
=
2\pi\int_0^\varepsilon r(-\log r)\,dr
<\infty,
\]
so $\varphi\in L^1(\mu_d)$.
By Lemma~\ref{lm:Id_ball_ext}, $I_d\subset B(0,R_d)$ for some $R_d\in(0,1)$, and hence
$\varphi(z)\ge -\log R_d>0$ for $z\in I_d\setminus\{0\}$. Since $\mu_d(\{0\})=0$, it follows that
\[
\beta_d=\int_{I_d}\varphi\,d\mu_d\in(0,\infty).
\]
Fix $z\in I_d\setminus\mathcal N_d$ and write $z_n:=T_d^n(z)$.
Then $z_n\neq 0$ for all $n\ge 0$.
Let $(Q_n(z))_{n\ge 0}$ be the denominator sequence, with
\(
Q_n(z)=a_n(z)Q_{n-1}(z)+Q_{n-2}(z)\qquad(n\ge 1),
\) and initial data $Q_{-1}(z)=0$, $Q_0(z)=1$.
Set
\(
X_n:=Q_{n-1}(z)z_n+Q_n(z).
\) Since $z_{n-1}^{-1}=a_n(z)+z_n$, we have
\[
X_n
=
Q_{n-1}(z)\bigl(z_n+a_n(z)\bigr)+Q_{n-2}(z)
=
z_{n-1}^{-1}\bigl(Q_{n-2}(z)z_{n-1}+Q_{n-1}(z)\bigr)
=
z_{n-1}^{-1}X_{n-1}.
\]
Moreover,
\(
X_1=z_1+a_1(z)=z^{-1}.
\) Hence,
\(
X_n=\prod_{j=0}^{n-1} z_j^{-1}
\) by induction and therefore
\[
\log\abs{Q_{n-1}(z)T_d^n(z)+Q_n(z)}
=
\sum_{j=0}^{n-1}\varphi(T_d^j(z)).
\]
By Lemma~\ref{lm:denom_growth}, $\abs{Q_{n-1}(z)}<\abs{Q_n(z)}$, while $\abs{T_d^n(z)}\le R_d<1$ by Lemma~\ref{lm:Id_ball_ext}.
Thus
\[
(1-R_d)\abs{Q_n(z)}
<
\abs{Q_{n-1}(z)T_d^n(z)+Q_n(z)}
<
2\abs{Q_n(z)}.
\]
We deduce that
\[
\log\abs{Q_n(z)}
=
\sum_{j=0}^{n-1}\varphi(T_d^j(z))+O(1).
\]
Dividing by $n$ and applying the Birkhoff ergodic theorem to $\varphi\in L^1(\mu_d)$, we obtain
\[
\frac1n\log\abs{Q_n(z)}\longrightarrow \int_{I_d}\varphi\,d\mu_d=\beta_d
\qquad
\text{for }\mu_d\text{-a.e.\ }z.
\]
Exponentiating gives the second assertion, and the $m_d$-a.e.\ statement follows from Lemma~\ref{lm:mu_md}.
\end{proof}

\begin{proof}[Proof of Theorem~\ref{thm:khinchine_intro}]
Define
\[
f(z):=
\begin{cases}
\log\abs{a_1(z)},& z\in I_d\setminus\mathcal N_d,\\
0,& z\in \mathcal N_d.
\end{cases}
\]
By Lemma~\ref{lm:null_Nd} and Lemma~\ref{lm:mu_md}, we have $\mu_d(\mathcal N_d)=0$, and therefore
\[
\kappa_d=\int_{I_d} f\,d\mu_d.
\]
Moreover $f\ge 0$, since $a_1(z)\in\Ocal_d\setminus\{0\}$ on $I_d\setminus\mathcal N_d$ implies $\abs{a_1(z)}\ge 1$.
By Lemma~\ref{lm:tail}, there exist $t_0\ge 1$ and $C_1>0$ such that
\[
\mu_d\bigl(\abs{a_1}>t\bigr)\le C_1 t^{-2}
\qquad(t\ge t_0).
\]
For $u>0$, since $f=0$ on $\mathcal N_d$, we have
\[
\mu_d(f>u)=\mu_d\bigl(\{z\in I_d\setminus\mathcal N_d:\ \abs{a_1(z)}>e^u\}\bigr).
\]
Using the layer-cake formula, we obtain
\[
\int_{I_d} f\,d\mu_d
=
\int_0^\infty \mu_d(f>u)\,du
\le
\int_0^{\log t_0} 1\,du
+
\int_{\log t_0}^\infty C_1 e^{-2u}\,du
<\infty.
\]
Thus $f\in L^1(\mu_d)$ and $\kappa_d<\infty$.
Moreover, since $H_d>0$ in Lemma~\ref{lm:tail}, we have $\mu_d(\abs{a_1}>t)>0$ for some $t>1$, and hence
\[
\kappa_d
\ge
(\log t)\mu_d(\abs{a_1}>t)
>0.
\]
Since $\mu_d$ is $T_d$-invariant and ergodic by Lemma~\ref{lm:acim}, the Birkhoff ergodic theorem gives, for $\mu_d$-a.e.\ $z\in I_d$,
\[
\lim_{n\to\infty}\frac1n\sum_{j=0}^{n-1} f(T_d^j z)
=
\int_{I_d} f\,d\mu_d
=
\kappa_d.
\]
If $z\in I_d\setminus\mathcal N_d$, then $T_d^j(z)\in I_d\setminus\mathcal N_d$ for every $j\ge 0$, and by definition of the digits,
\[
f(T_d^j z)=\log\abs{a_1(T_d^j z)}=\log\abs{a_{j+1}(z)}.
\]
Therefore, for $\mu_d$-a.e.\ $z\in I_d\setminus\mathcal N_d$,
\[
\lim_{n\to\infty}\frac1n\sum_{j=1}^n \log\abs{a_j(z)}=\kappa_d.
\]
Exponentiating gives the geometric-mean limit with $K_d=e^{\kappa_d}$.
\end{proof}

\begin{proof}[Proof of Corollary~\ref{cor:loglaws_intro}]
Fix $\alpha>0$ and apply Theorem~\ref{thm:BB} with $u_n=n^{\alpha}$. If $\alpha>1/2$, then $\sum_n u_n^{-2}<\infty$, so $\abs{a_n(z)}\ge n^{\alpha}$ holds only finitely often for $m_d$-a.e.\ $z$, and hence
$\limsup_{n\to\infty}\frac{\log\abs{a_n(z)}}{\log n}\le \alpha$.
If $0<\alpha<1/2$, then $\sum_n u_n^{-2}=\infty$, so $\abs{a_n(z)}\ge n^{\alpha}$ holds infinitely often for $m_d$-a.e.\ $z$, and hence
$\limsup_{n\to\infty}\frac{\log\abs{a_n(z)}}{\log n}\ge \alpha$.
Letting $\alpha\downarrow 1/2$ in the first inequality and $\alpha\uparrow 1/2$ in the second gives
\[
\limsup_{n\to\infty}\frac{\log\abs{a_n(z)}}{\log n}=\frac12.
\]
Fix $c\in\R$ and apply Theorem~\ref{thm:BB} with $u_n=\sqrt{n}(\log n)^c$ for $n\ge 3$.
Then $\sum_n u_n^{-2}$ converges if and only if $c>1/2$.
If $c>1/2$, then $\abs{a_n(z)}\ge \sqrt{n}(\log n)^c$ occurs only finitely often for $m_d$-a.e.\ $z$, and hence
$\limsup_{n\to\infty}\frac{\log \abs{a_n(z)}-\tfrac12\log n}{\log\log n}\le c$.
If $c<1/2$, then the same event occurs infinitely often for $m_d$-a.e.\ $z$, and hence
$\limsup_{n\to\infty}\frac{\log \abs{a_n(z)}-\tfrac12\log n}{\log\log n}\ge c$.
Letting $c\downarrow 1/2$ and $c\uparrow 1/2$ yields
\[
\limsup_{n\to\infty}\frac{\log \abs{a_n(z)}-\tfrac12\log n}{\log\log n}=\frac12.
\]
\end{proof}

\section{Hirst-type dimension formula}\label{sec:HirstProof}

For $A\ge 1$ set
\[
\Ocal_d^{\ge A}:=\set{\alpha\in\Ocal_d:\ \abs{\alpha}\ge A}.
\]
Recall that $I_d^\circ:=\mathrm{int}(I_d)$.

\begin{lm}[Geometry of the fundamental domain]\label{lm:geometry_fd}
Let
\[
r_d:=\dist\bigl(0,\C\setminus I_d^\circ\bigr),
\]
and let $R_d\in(0,1)$ be as in Lemma~\ref{lm:Id_ball_ext}. Then
\[
B(0,r_d)\subset I_d^\circ \subset I_d \subset B(0,R_d),
\]
and in particular
\[
0<r_d\le R_d<1.
\]
\end{lm}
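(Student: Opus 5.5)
The proof is short and follows the definitions directly. We need three facts: $r_d>0$, the chain of inclusions, and $r_d\le R_d$.

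First we show $r_d>0$. By property (iii) of the fundamental set, $0\in\mathrm{int}(I_d')=I_d^\circ$. Since $I_d^\circ$ is open, there is some $\varepsilon>0$ with $B(0,\varepsilon)\subset I_d^\circ$. Hence every point of $\C\setminus I_d^\circ$ has modulus at least $\varepsilon$, so $r_d\ge\varepsilon>0$. Note also that $\C\setminus I_d^\circ\neq\varnothing$, because $I_d$ is compact by Lemma~\ref{lm:Id_convex}. So $r_d$ is a finite positive number.

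Next, the inclusions. Suppose $\abs{z}<r_d$. If $z$ lay in $\C\setminus I_d^\circ$, then $\dist(0,\C\setminus I_d^\circ)\le\abs{z}<r_d$, contradicting the definition of $r_d$. So $z\in I_d^\circ$, which proves $B(0,r_d)\subset I_d^\circ$. The inclusion $I_d^\circ\subset I_d$ is trivial. The inclusion $I_d\subset B(0,R_d)$, with $R_d\in(0,1)$, is Lemma~\ref{lm:Id_ball_ext}.

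Finally, $r_d\le R_d$. Chaining the inclusions gives $B(0,r_d)\subset B(0,R_d)$, and for open disks centred at the same point this forces $r_d\le R_d$. Together with $R_d<1$ this gives $0<r_d\le R_d<1$.

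The only point needing care is the finiteness and positivity of $r_d$, and that is handled by the openness of $I_d^\circ$ and the compactness of $I_d$ as above.
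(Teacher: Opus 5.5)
Your proof is correct and follows the same argument as the paper. You get $r_d>0$ from $0$ lying in the open set $I_d^\circ$, derive $B(0,r_d)\subset I_d^\circ$ directly from the definition of distance, take the outer inclusion from Lemma~\ref{lm:Id_ball_ext}, and your remarks on the finiteness of $r_d$ and on obtaining $r_d\le R_d$ from the nested disks just make explicit steps the paper leaves implicit.
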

\begin{proof}
Since $0\in I_d^\circ$ and $\C\setminus I_d^\circ$ is closed, we have $r_d>0$.
If $\abs{z}<r_d$, then $\dist(z,\C\setminus I_d^\circ)\ge r_d-\abs{z}>0$, hence $z\notin\C\setminus I_d^\circ$ and so $z\in I_d^\circ$.
The inclusion $I_d\subset B(0,R_d)$ is Lemma~\ref{lm:Id_ball_ext}.
Therefore
\[
B(0,r_d)\subset I_d^\circ \subset I_d \subset B(0,R_d),
\]
and the inequality $0<r_d\le R_d<1$ follows immediately.
\end{proof}

\begin{lm}\label{lm:trapping}
Set $\Delta_d:=I_d$ and $\widetilde\Delta_d:=B(0,2R_d)$.
Let
\[
K_{\mathrm{dist}}=K_{\mathrm{dist}}(\Delta_d,\widetilde\Delta_d)\ge 1,
\qquad
C_{\diam}=C_{\diam}(\Delta_d,\widetilde\Delta_d,0,r_d)\ge 1
\]
be the constants from Lemma~\ref{lm:NT25_geom} for the compact inclusion $\Delta_d\Subset \widetilde\Delta_d$.
Then there exists $A_0\ge 1$ such that for every $\alpha\in\Ocal_d^{\ge A_0}$ we obtain:
\begin{enumerate}[label=(\roman*), leftmargin=3.0em]
\item $h_\alpha(I_d)\subset B(0,r_d)\subset I_d^\circ$;
\item $h_\alpha(I_d)\cap\partial I_d=\varnothing$;
\item if $\alpha\neq\beta$ then $h_\alpha(I_d^\circ)\cap h_\beta(I_d^\circ)=\varnothing$;
\item $h_\alpha$ extends to a $C^1$ conformal diffeomorphism
$\widetilde h_\alpha:\widetilde\Delta_d\to \widetilde h_\alpha(\widetilde\Delta_d)\subset\widetilde\Delta_d$;
\item $\sup_{z\in I_d}\abs{D h_\alpha(z)}\le \gamma$ where $\gamma:=1/(A_0-R_d)^2$ satisfies $0<\gamma<1$ and
\[
C_{\diam}^{2}K_{\mathrm{dist}}\gamma\le \tfrac12.
\]
\end{enumerate}
By Lemma~\ref{lm:Id_convex}, the compact set $\Delta_d:=I_d$ is connected, convex, and satisfies $\overline{\mathrm{int}(\Delta_d)}=\Delta_d$, and
\[
\Phi_d^{\ge A_0}:=\{h_\alpha\}_{\alpha\in\Ocal_d^{\ge A_0}}
\]
is a conformal IFS on $\Delta_d$ satisfying \textup{\ref{cond:A1}}--\textup{\ref{cond:A4}}; moreover, the union in \textup{\ref{cond:A4}} is empty.
\end{lm}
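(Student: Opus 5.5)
The argument is elementary: every property follows from the explicit form $h_\alpha(z)=1/(z+\alpha)$ together with the inclusions $B(0,r_d)\subset I_d^\circ\subset I_d\subset B(0,R_d)$ from Lemma~\ref{lm:geometry_fd}. First fix the constants $K_{\mathrm{dist}}$ and $C_{\diam}$. Since $0\in I_d^\circ$ and $B(0,r_d)\subset\mathrm{int}(I_d)$, Lemma~\ref{lm:NT25_geom} can be applied with $\zeta=0$ and $\delta=r_d$. By the statement of that lemma, these constants depend only on $(\Delta_d,\widetilde\Delta_d,0,r_d)$ and not on the alphabet. Then choose $A_0$ large enough that the following all hold:
\[
A_0 > 2R_d+\frac{1}{r_d},\qquad
A_0-2R_d\ \ge\ \frac{1}{R_d},\qquad
\frac{C_{\diam}^2K_{\mathrm{dist}}}{(A_0-R_d)^2}\le\frac12 .
\]
The third condition also gives $\gamma<1$, because $C_{\diam}^2K_{\mathrm{dist}}\ge1$.

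For (i), take $\abs{\alpha}\ge A_0$ and $z\in I_d$. Then $\abs{z+\alpha}\ge \abs{\alpha}-R_d> 1/r_d$, so $\abs{h_\alpha(z)}<r_d$. This gives $h_\alpha(I_d)\subset B(0,r_d)\subset I_d^\circ$. Part (ii) follows at once, since $I_d^\circ\cap\partial I_d=\varnothing$.

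For (iii), first note that $h_\alpha$ is injective on $\C\setminus\{-\alpha\}$. Suppose $h_\alpha(z)=h_\beta(w)$ with $z,w\in I_d^\circ$. Then $z+\alpha=w+\beta$, so $z-w=\beta-\alpha\in\Ocal_d\setminus\{0\}$. This contradicts property (ii) of the fundamental set: $z\in\mathrm{int}(I_d')$ and $w=z+(\alpha-\beta)$ would give two distinct lattice translates of $\mathrm{int}(I_d')$ sharing the point $w$.

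For (iv), the pole $-\alpha$ of $h_\alpha$ satisfies $\abs{-\alpha}\ge A_0>2R_d$, so it lies outside $\overline{\widetilde\Delta_d}$. Hence $\widetilde h_\alpha(z)=1/(z+\alpha)$ is a holomorphic injective map on $B(0,2R_d)$, that is, a $C^1$ conformal diffeomorphism onto its image. For $\abs{z}<2R_d$ we have $\abs{z+\alpha}> A_0-2R_d\ge 1/R_d$, so $\abs{\widetilde h_\alpha(z)}<R_d<2R_d$. Thus $\widetilde h_\alpha(\widetilde\Delta_d)\subset\widetilde\Delta_d$.

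For (v), use $\abs{Dh_\alpha(z)}=\abs{z+\alpha}^{-2}\le(\abs{\alpha}-R_d)^{-2}\le(A_0-R_d)^{-2}=\gamma$ for $z\in I_d$. The required bound $C_{\diam}^2K_{\mathrm{dist}}\gamma\le\frac12$ holds by the choice of $A_0$.

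Finally, assemble the IFS conditions. Convexity, connectedness and $\overline{\mathrm{int}(\Delta_d)}=\Delta_d$ come from Lemma~\ref{lm:Id_convex}. Each $h_\alpha$ maps $\Delta_d$ into itself by (i). Condition \ref{cond:A1} is (iii), and \ref{cond:A2} is (iv), using $\Delta_d\subset B(0,R_d)\Subset B(0,2R_d)$. Condition \ref{cond:A3} holds with $m=1$ by (v). Condition \ref{cond:A4} holds because, by (ii), the union $\bigcup_\alpha (h_\alpha(\Delta_d)\cap\partial\Delta_d)$ is empty.

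The only point needing care is that $C_{\diam}$ and $K_{\mathrm{dist}}$ are independent of $A_0$, since otherwise choosing $A_0$ from them would be circular. This is handled by reading Lemma~\ref{lm:NT25_geom} as giving constants that depend only on the geometric data $(\Delta_d,\widetilde\Delta_d,0,r_d)$. If that reading were doubtful, one could instead obtain the bound directly from the Koebe distortion theorem for univalent maps on $B(0,2R_d)$ restricted to $\overline{B(0,R_d)}$, which yields constants that again depend only on this geometry.
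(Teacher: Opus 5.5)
Your proposal is correct and follows essentially the same route as the paper: choose $A_0$ large in terms of $R_d$, $r_d$, $C_{\diam}$, $K_{\mathrm{dist}}$, then verify (i)--(v) directly from $h_\alpha(z)=1/(z+\alpha)$ and $B(0,r_d)\subset I_d^\circ\subset I_d\subset B(0,R_d)$, with (iii) coming from the disjointness of the lattice translates of $\mathrm{int}(I_d')$. The differences are cosmetic: your condition $A_0-2R_d\ge 1/R_d$ is slightly stronger and gives $\widetilde h_\alpha(\widetilde\Delta_d)\subset B(0,R_d)$, and you state explicitly that the constants from Lemma~\ref{lm:NT25_geom} depend only on the geometric data, which the paper uses implicitly in the same way.
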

\begin{proof}
Choose $A_0$ large so that
\[
A_0>R_d+\frac{1}{r_d},
\qquad
A_0>2R_d,
\qquad
\frac{1}{A_0-2R_d}\le 2R_d,
\qquad
\frac{1}{(A_0-R_d)^2}\le \min\Bigl\{\frac14,\frac{1}{2C_{\diam}^{2}K_{\mathrm{dist}}}\Bigr\}.
\]
If $z\in I_d$ then $\abs{z}\le R_d$, hence for $\abs{\alpha}\ge A_0$,
\[
\abs{h_\alpha(z)}=\frac{1}{\abs{z+\alpha}}
\le \frac{1}{\abs{\alpha}-R_d}
\le \frac{1}{A_0-R_d}
< r_d,
\]
which proves (i). Then (ii) follows immediately from $B(0,r_d)\subset I_d^\circ$. If $w\in h_\alpha(I_d^\circ)\cap h_\beta(I_d^\circ)$, then
$1/w\in(\alpha+I_d^\circ)\cap(\beta+I_d^\circ)$.
Since the selection map $[\cdot]$ is unique on each translate $\alpha+I_d^\circ$,
the translates $\alpha+I_d^\circ$ are pairwise disjoint, and so $\alpha=\beta$.
Thus (iii) holds.
For (iv), since $\abs{\alpha}\ge A_0>2R_d$, the translate $\alpha+\widetilde\Delta_d$ avoids $0$ and $h_\alpha$ is holomorphic on $\widetilde\Delta_d$
with inverse $h_\alpha^{-1}(w)=1/w-\alpha$.
Moreover for $z\in\widetilde\Delta_d$,
\[
\abs{h_\alpha(z)}\le \frac{1}{\abs{\alpha}-2R_d}\le \frac{1}{A_0-2R_d}\le 2R_d,
\]
so $h_\alpha(\widetilde\Delta_d)\subset \widetilde\Delta_d$.
Finally, for $z\in I_d$,
\[
\abs{D h_\alpha(z)}=\frac{1}{\abs{z+\alpha}^{2}}
\le \frac{1}{(\abs{\alpha}-R_d)^2}
\le \frac{1}{(A_0-R_d)^2}
=: \gamma.
\]
This proves (v). The final IFS statement follows from (ii)--(v).
\end{proof}

\begin{lm}\label{lm:2decay}
There exist constants $0<C_1\le C_2<\infty$ and $C_{\diam}\ge 1$ such that:
\begin{enumerate}[label=(\roman*), leftmargin=3.0em]
\item for every $\alpha\in\Ocal_d^{\ge A_0}$ and every $z\in I_d$,
\[
\frac{C_1}{\abs{\alpha}^{2}}\ \le\ \abs{D h_\alpha(z)}\ \le\ \frac{C_2}{\abs{\alpha}^{2}};
\]
\item for every $n\ge 1$ and every word $\boldsymbol{\alpha}=(\alpha_1,\dots,\alpha_n)\in (\Ocal_d^{\ge A_0})^n$,
\[
C_{\diam}^{-1}\prod_{k=1}^n (\abs{\alpha_k}+R_d)^{-2}
\ \le\
\diam\!\bigl(h_{\boldsymbol{\alpha}}(\Delta_d)\bigr)
\ \le\
C_{\diam}\prod_{k=1}^n (\abs{\alpha_k}-R_d)^{-2}.
\]
\end{enumerate}
\end{lm}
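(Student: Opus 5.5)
Part (i) follows from a direct computation. Since $h_\alpha(z)=1/(z+\alpha)$, we have $D h_\alpha(z)=-(z+\alpha)^{-2}$, so $\abs{Dh_\alpha(z)}=\abs{z+\alpha}^{-2}$. For $z\in I_d$, Lemma~\ref{lm:Id_ball_ext} gives $\abs{z}\le R_d<1$. Hence $\abs{\alpha}-R_d\le\abs{z+\alpha}\le\abs{\alpha}+R_d$. For $\abs{\alpha}\ge A_0>2R_d$ (Lemma~\ref{lm:trapping}) we have $\abs{\alpha}-R_d\ge \abs{\alpha}/2$ and $\abs{\alpha}+R_d\le 2\abs{\alpha}$. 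So (i) holds with $C_1=1/4$ and $C_2=4$.

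For part (ii), write $h_{\boldsymbol\alpha}=h_{\alpha_1}\circ\cdots\circ h_{\alpha_n}$. Since the family $\Phi_d^{\ge A_0}$ is a conformal IFS on $\Delta_d=I_d$ by Lemma~\ref{lm:trapping}, Lemma~\ref{lm:NT25_geom}\ref{it:diam_deriv} applies with $\zeta=0$ and $\delta=r_d$; the hypothesis $B(0,r_d)\subset\mathrm{int}(\Delta_d)$ holds by Lemma~\ref{lm:geometry_fd}. This gives
\[
C_{\diam}^{-1}\abs{Dh_{\boldsymbol\alpha}(0)}\le \diam\bigl(h_{\boldsymbol\alpha}(\Delta_d)\bigr)\le C_{\diam}\abs{Dh_{\boldsymbol\alpha}(0)}.
\]
It therefore suffices to bound $\abs{Dh_{\boldsymbol\alpha}(0)}$. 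By the chain rule,
\[
Dh_{\boldsymbol\alpha}(0)=\prod_{k=1}^n Dh_{\alpha_k}(w_k),
\qquad w_k:=h_{\alpha_{k+1}}\circ\cdots\circ h_{\alpha_n}(0),\quad w_n:=0 .
\]
Each $w_k$ lies in $I_d$: we have $0\in I_d$, and Lemma~\ref{lm:trapping}(i) gives $h_\alpha(I_d)\subset I_d$. By the computation in (i), each factor lies between $(\abs{\alpha_k}+R_d)^{-2}$ and $(\abs{\alpha_k}-R_d)^{-2}$. Multiplying over $k$ yields
\[
\prod_{k=1}^n(\abs{\alpha_k}+R_d)^{-2}\le\abs{Dh_{\boldsymbol\alpha}(0)}\le\prod_{k=1}^n(\abs{\alpha_k}-R_d)^{-2},
\]
and combining this with the diameter comparison gives (ii).

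The only point needing care is that the constant $C_{\diam}$ must not depend on $n$ or on the word. This is exactly what Lemma~\ref{lm:NT25_geom} provides, with $\Delta_d$, $\widetilde\Delta_d=B(0,2R_d)$, $\zeta=0$ and $\delta=r_d$ all fixed. The constant is the same one already fixed in Lemma~\ref{lm:trapping}, so no further estimate is needed.
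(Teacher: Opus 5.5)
Your proof is correct and follows essentially the same route as the paper. For (i) you use the direct bound $\abs{\alpha}-R_d\le\abs{z+\alpha}\le\abs{\alpha}+R_d$; for (ii) you combine the diameter--derivative comparison of Lemma~\ref{lm:NT25_geom}\ref{it:diam_deriv} at $\zeta=0$, $\delta=r_d$ with the chain rule along intermediate points that stay in $I_d$. The only difference is cosmetic: you take $C_1=1/4$ and $C_2=4$ via $A_0>2R_d$, whereas the paper takes $C_1=(1+R_d)^{-2}$ and $C_2=(1-R_d/A_0)^{-2}$.
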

\begin{proof}
For (i), if $z\in I_d$ then $\abs{z}\le R_d$ by Lemma~\ref{lm:Id_ball_ext}. Hence
\[
\frac{1}{(\abs{\alpha}+R_d)^2}\ \le\ \frac{1}{\abs{z+\alpha}^2}\ \le\ \frac{1}{(\abs{\alpha}-R_d)^2}.
\]
Since $\abs{\alpha}\ge A_0\ge 1$, we have $\abs{\alpha}+R_d\le (1+R_d)\abs{\alpha}$, which gives the lower bound with $C_1:=(1+R_d)^{-2}$.
Also $\abs{\alpha}-R_d\ge (1-R_d/A_0)\abs{\alpha}$, which gives the upper bound with $C_2:=(1-R_d/A_0)^{-2}$.
For (ii), since $B(0,r_d)\subset \Delta_d^\circ$ by Lemma~\ref{lm:geometry_fd}, Lemma~\ref{lm:NT25_geom}\textup{\ref{it:diam_deriv}} (with $\zeta=0$ and $\delta=r_d$) yields
\[
\diam\!\bigl(h_{\boldsymbol{\alpha}}(\Delta_d)\bigr)\asymp \abs{D h_{\boldsymbol{\alpha}}(0)}
\]
uniformly in $n$ and $\boldsymbol{\alpha}\in (\Ocal_d^{\ge A_0})^n$.
By the chain rule and the bound $\abs{z}\le R_d$ for all $z\in I_d$,
\[
\prod_{k=1}^n (\abs{\alpha_k}+R_d)^{-2}
\ \le\
\abs{D h_{\boldsymbol{\alpha}}(0)}
\ \le\
\prod_{k=1}^n (\abs{\alpha_k}-R_d)^{-2},
\]
since each intermediate point $h_{\alpha_{k+1}}\circ\cdots\circ h_{\alpha_n}(0)$ lies in $I_d$ by Lemma~\ref{lm:trapping}(i).
Combining these bounds gives the claim.
\end{proof}

\begin{lm}\label{lm:fullshift}
Let $\pi:(\Ocal_d^{\ge A_0})^\N\to I_d$ be the coding map for $\Phi_d^{\ge A_0}$ (based at any $\zeta\in I_d$).
Then $\pi((\Ocal_d^{\ge A_0})^\N)\subset B(0,r_d)\subset I_d^\circ$ and $\pi((\Ocal_d^{\ge A_0})^\N)\cap\mathcal N_d=\varnothing$, and for every $\omega=(\alpha_1,\alpha_2,\dots)\in (\Ocal_d^{\ge A_0})^\N,$
the point $x=\pi(\omega)$ satisfies
\[
a_n(x)=\alpha_n\qquad(n\ge 1).
\]
Conversely, if $x\in I_d\setminus\mathcal N_d$ satisfies $a_n(x)\in\Ocal_d^{\ge A_0}$ for all $n\ge 1$, then $x=\pi(\omega)$ for
$\omega=(a_1(x),a_2(x),\dots)\in (\Ocal_d^{\ge A_0})^\N$.
Moreover, the set of points of $\pi((\Ocal_d^{\ge A_0})^\N)$ having more than one address under $\pi$ is countable.
\end{lm}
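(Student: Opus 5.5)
The plan is to prove the four assertions in order: where the coding map lands, that $\pi$ reproduces digits, the converse, and countability of multiple addresses.

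\textbf{Location of the limit set.} Let $\omega\in(\Ocal_d^{\ge A_0})^\N$. By Lemma~\ref{lm:trapping}(v), every finite composition $h_{\alpha_1}\circ\cdots\circ h_{\alpha_n}$ has derivative bounded by $\gamma^n$ on the convex set $I_d$. Hence the nested compact sets $h_{\alpha_1}\circ\cdots\circ h_{\alpha_n}(I_d)$ have diameters of order $\gamma^n$, and they shrink to the single point $\pi(\omega)$. For $n\ge1$ each of these sets lies in $h_{\alpha_1}(I_d)$, which is contained in $B(0,r_d)$ by Lemma~\ref{lm:trapping}(i). So $\pi(\omega)\in B(0,r_d)\subset I_d^\circ$.

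\textbf{Digits of $x=\pi(\omega)$.} Put $x_n:=\pi(\sigma^n\omega)$, where $\sigma$ is the shift. By continuity of $h_{\alpha_{n+1}}$ we get $x_n=h_{\alpha_{n+1}}(x_{n+1})$, that is, $1/x_n=\alpha_{n+1}+x_{n+1}$. Each $x_{n+1}$ lies in $B(0,r_d)\subset I_d^\circ$. Therefore $1/x_n\in\alpha_{n+1}+\mathrm{int}(I_d')$, and this point is not in $\mathcal B_d$, because the translates of the interior are disjoint and none of them meets the boundary set. On such points the selection is unique, so $[1/x_n]=\alpha_{n+1}$ and $T_d(x_n)=x_{n+1}$. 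Also $x_n\neq0$, since $x_n=h_{\alpha_{n+1}}(x_{n+1})$ and $h$ never vanishes, and $x_n\notin B_d$.

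By induction, $T_d^n(x)=x_n$ and $a_{n+1}(x)=\alpha_{n+1}$ for every $n$. Since no iterate of $x$ lies in $B_d\cup\{0\}$, we get $x\notin\mathcal N_d$. The same argument applies to every $\omega$, which proves $\pi((\Ocal_d^{\ge A_0})^\N)\cap\mathcal N_d=\varnothing$.

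\textbf{Converse.} Let $x\in I_d\setminus\mathcal N_d$ have all digits in $\Ocal_d^{\ge A_0}$. From $T_d^{n}(x)=1/T_d^{n-1}(x)-a_n(x)$ we get $T_d^{n-1}(x)=h_{a_n(x)}(T_d^n x)$. Iterating gives $x=h_{a_1}\circ\cdots\circ h_{a_n}(T_d^n x)$ with $T_d^nx\in I_d$. So $x$ lies in every set $h_{a_1}\circ\cdots\circ h_{a_n}(I_d)$, whose diameters tend to $0$, and hence $x=\pi(a_1(x),a_2(x),\dots)$.

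\textbf{Countability of multiple addresses.} Lemma~\ref{lm:trapping} shows that $\Phi_d^{\ge A_0}$ is an infinite conformal IFS satisfying \ref{cond:A4}, so Lemma~\ref{lm:coding_countable} gives the statement directly. In fact the digit computation above already shows that each point has a unique address: the address of $\pi(\omega)$ is recovered as its digit sequence. So the exceptional set is empty, which is stronger than countable.

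\textbf{Main obstacle.} The delicate step is the second one: the selection map must be forced to return $\alpha_{n+1}$, with no boundary ambiguity. This relies on landing strictly inside $B(0,r_d)\subset I_d^\circ$, guaranteed by Lemma~\ref{lm:trapping}(i), together with the disjointness of the interior translates.
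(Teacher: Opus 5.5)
Your proof is correct and follows the paper's route. Contraction together with Lemma~\ref{lm:trapping}(i) places the limit set in $B(0,r_d)$, and the interior membership $1/x_n\in\alpha_{n+1}+I_d^\circ$ forces the digits; you phrase this via the shifted points $\pi(\sigma^n\omega)$ rather than the paper's nested inclusion $x\in h_{\alpha_1}(I_d^\circ)$. The converse comes from the inverse-branch identity, and countability from Lemma~\ref{lm:coding_countable}. Your extra remark that the digit computation already makes $\pi$ injective is also valid, so in fact no point has more than one address.
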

\begin{proof}
By Lemma~\ref{lm:trapping}(i), each $h_\alpha(I_d)$ is contained in $B(0,r_d)\Subset I_d^\circ$.
Thus every nested cylinder image $h_{\alpha_1}\circ\cdots\circ h_{\alpha_n}(I_d)$ is compactly contained in $I_d^\circ$.
Their intersection is a singleton $x\in I_d^\circ$ by uniform contraction.
We claim $x\notin\mathcal N_d$ and, moreover, $T_d^{n-1}(x)\in h_{\alpha_n}(I_d^\circ)$ for all $n\ge 1$.
Indeed, since $h_{\alpha_2}(I_d)\subset I_d^\circ$, we have
\[
x\in h_{\alpha_1}\circ h_{\alpha_2}(I_d)\subset h_{\alpha_1}(I_d^\circ),
\]
hence $1/x\in \alpha_1+I_d^\circ\subset \C\setminus\mathcal{B}_d$ and so $x\notin B_d$.
Therefore $T_d(x)=h_{\alpha_1}^{-1}(x)$ is well-defined and belongs to
\[
T_d(x)\in h_{\alpha_2}\circ h_{\alpha_3}(I_d)\subset h_{\alpha_2}(I_d^\circ).
\]
Iterating this argument gives $T_d^{n-1}(x)\in h_{\alpha_n}(I_d^\circ)\subset I_d^\circ$ for all $n\ge 1$, hence $T_d^{n-1}(x)\notin B_d$ for all $n$.
Since $h_\alpha$ never vanishes, $x\neq 0$, and thus $x\notin\mathcal N_d$.
For such $x$, the inclusion $x\in h_{\alpha_1}(I_d^\circ)$ implies $1/x\in \alpha_1+I_d^\circ$, hence $[1/x]=\alpha_1$ and $a_1(x)=\alpha_1$.
Applying the same argument to $T_d^{n-1}(x)\in h_{\alpha_n}(I_d^\circ)$ yields $a_n(x)=\alpha_n$ for all $n$.
Conversely, let $x\in I_d\setminus\mathcal N_d$ satisfy $a_n(x)\in\Ocal_d^{\ge A_0}$ for all $n\ge 1$ and set $\omega:=(a_1(x),a_2(x),\dots)\in (\Ocal_d^{\ge A_0})^\N$.
For each $n\ge 1$ we have
\[
x=h_{a_1(x)}\circ h_{a_2(x)}\circ\cdots\circ h_{a_n(x)}\bigl(T_d^n(x)\bigr)\in h_{\omega_1}\circ\cdots\circ h_{\omega_n}(I_d),
\]
since $T_d^n(x)\in I_d$.
Therefore $x\in \bigcap_{n\ge 1} h_{\omega_1}\circ\cdots\circ h_{\omega_n}(I_d)$, which equals $\{\pi(\omega)\}$ by uniform contraction,
hence $x=\pi(\omega)$. Finally, since $\Phi_d^{\ge A_0}$ satisfies \textup{(A4)} by Lemma~\ref{lm:trapping}, Lemma~\ref{lm:coding_countable} implies that $\pi$ fails to be injective only on a countable subset of $\pi((\Ocal_d^{\ge A_0})^\N)$.
\end{proof}

\begin{lm}\label{lm:bounded_overlap}
For $n\ge1$ and $\boldsymbol{\alpha}\in (\Ocal_d^{\ge A_0})^n$ write $\Delta(\boldsymbol{\alpha}):=h_{\boldsymbol{\alpha}}(\Delta_d)$.
There exist constants $c>0$ and $Q\in\N$ such that:
\begin{enumerate}[label=(\roman*), leftmargin=3.0em]
\item for every $n$ and every $\boldsymbol{\alpha}\in (\Ocal_d^{\ge A_0})^n$, the set $\Delta(\boldsymbol{\alpha})$ contains a Euclidean disk of radius $c\,\diam(\Delta(\boldsymbol{\alpha}))$;
\item for every $x\in\C$ and $r>0$, the ball $B(x,r)$ meets at most $Q$ sets $\Delta(\boldsymbol{\alpha})$
(over all $n\ge1$ and $\boldsymbol{\alpha}\in (\Ocal_d^{\ge A_0})^n$) satisfying
\[
r \le \diam(\Delta(\boldsymbol{\alpha})) < 2r.
\]
\end{enumerate}
\end{lm}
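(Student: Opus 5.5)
Part (i) will come directly from Lemma~\ref{lm:NT25_geom}. Apply it to the conformal IFS $\Phi_d^{\ge A_0}$ of Lemma~\ref{lm:trapping}, with $\zeta=0$ and $\delta=r_d$; this is legitimate because $B(0,r_d)\subset\mathrm{int}(\Delta_d)$ by Lemma~\ref{lm:geometry_fd}. Item~\ref{it:inscribed_disk} then shows that $\Delta(\boldsymbol\alpha)$ contains a disk of radius $\frac{r_d}{3K_{\mathrm{dist}}}\abs{Dh_{\boldsymbol\alpha}(0)}$. Item~\ref{it:diam_deriv} gives $\abs{Dh_{\boldsymbol\alpha}(0)}\ge C_{\diam}^{-1}\diam(\Delta(\boldsymbol\alpha))$. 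Combining the two, (i) holds with $c:=r_d/(3K_{\mathrm{dist}}C_{\diam})$, which is uniform in $n$ and $\boldsymbol\alpha$.

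For (ii) I will use a packing argument, which requires the relevant sets $\Delta(\boldsymbol\alpha)$ to have pairwise disjoint interiors. This fails across levels, because $\Delta(\boldsymbol\alpha\beta)\subset\Delta(\boldsymbol\alpha)$. So I first show that for a fixed $\boldsymbol\alpha$, only boundedly many extensions of it can lie in the same diameter window $[r,2r)$.

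Suppose $\boldsymbol\beta$ extends $\boldsymbol\alpha$ by $k\ge1$ letters. By the chain rule and item~\ref{it:koebe_dist}, $\abs{Dh_{\boldsymbol\beta}(0)}\le K_{\mathrm{dist}}\,\abs{Dh_{\boldsymbol\alpha}(0)}\,\gamma^k$. Item~\ref{it:diam_deriv} then gives $\diam\Delta(\boldsymbol\beta)\le C_{\diam}^2K_{\mathrm{dist}}\gamma^k\diam\Delta(\boldsymbol\alpha)$, and by Lemma~\ref{lm:trapping}(v) this is at most $2^{-k}\diam\Delta(\boldsymbol\alpha)$. This normalization of $\gamma$ is exactly what Lemma~\ref{lm:trapping}(v) was set up to provide. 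Consequently, if both $\Delta(\boldsymbol\alpha)$ and $\Delta(\boldsymbol\beta)$ have diameter in $[r,2r)$, then $r\le\diam\Delta(\boldsymbol\beta)<2^{-k}\cdot 2r$, which forces $k=0$. In other words, no set in the window is a proper sub-cylinder of another set in the window.

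Any two words that are not prefix-comparable differ at some first position $j$. Their images then lie in $h_{\boldsymbol\alpha_{<j}}(h_{\alpha_j}(I_d))$ and $h_{\boldsymbol\alpha_{<j}}(h_{\alpha'_j}(I_d))$ with $\alpha_j\ne\alpha'_j$. By Lemma~\ref{lm:trapping}(iii) and injectivity of $h_{\boldsymbol\alpha_{<j}}$, these two sets have disjoint interiors. Therefore the family of $\Delta(\boldsymbol\alpha)$ with diameter in $[r,2r)$ consists of sets with pairwise disjoint interiors.

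With disjointness in hand, the packing count is routine. Suppose $\Delta(\boldsymbol\alpha)$ meets $B(x,r)$ and has diameter less than $2r$. Then $\Delta(\boldsymbol\alpha)\subset B(x,3r)$, and by (i) it contains a disk of radius at least $cr$. These disks have disjoint interiors, so comparing areas gives at most $\pi(3r)^2/(\pi c^2r^2)=9/c^2$ such sets. Thus (ii) holds with $Q:=\lfloor 9/c^2\rfloor$. The only delicate step is excluding nested pairs across different levels, and that is handled by the contraction estimate above.
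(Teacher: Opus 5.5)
Your proposal is correct and follows essentially the same route as the paper. Part (i) comes from the inscribed-disk and diameter--derivative bounds of Lemma~\ref{lm:NT25_geom} at $\zeta=0$, $\delta=r_d$; part (ii) excludes strict-prefix pairs in the window $[r,2r)$ via the normalization $C_{\diam}^{2}K_{\mathrm{dist}}\gamma\le\tfrac12$ of Lemma~\ref{lm:trapping}(v), handles incomparable words by Lemma~\ref{lm:trapping}(iii), and then packs the inscribed disks into $B(x,3r)$. The differences are cosmetic: you iterate the contraction to get a factor $\gamma^k$, whereas the paper uses one step and $\Delta(\boldsymbol\beta)\subset\Delta(\boldsymbol\alpha\eta)$, and you take $Q=\lfloor 9/c^2\rfloor$ rather than $\lceil 9/c^2\rceil$.
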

\begin{proof}
Let $r_d>0$ be as in Lemma~\ref{lm:geometry_fd}, so $B(0,r_d)\subset \Delta_d^\circ$.
Let $K_{\mathrm{dist}}\ge 1$ and $C_{\diam}\ge 1$ be as in Lemma~\ref{lm:trapping}.
Applying Lemma~\ref{lm:NT25_geom}\textup{\ref{it:inscribed_disk}} with $\zeta=0$ and $\delta=r_d$, for every $n\ge 1$ and $\boldsymbol{\alpha}\in (\Ocal_d^{\ge A_0})^n$
the set $\Delta(\boldsymbol{\alpha})$ contains a Euclidean disk of radius at least
\[
\frac{r_d}{3K_{\mathrm{dist}}}\,\abs{D h_{\boldsymbol{\alpha}}(0)}.
\]
By Lemma~\ref{lm:NT25_geom}\textup{\ref{it:diam_deriv}} (with $\zeta=0$ and $\delta=r_d$), we have $\abs{D h_{\boldsymbol{\alpha}}(0)}\asymp \diam(\Delta(\boldsymbol{\alpha}))$ uniformly in $n,\boldsymbol{\alpha}$,
hence (i). For (ii), fix $x\in\C$ and $r>0$ and let $\mathscr{F}$ be the collection of all $\Delta(\boldsymbol{\alpha})$ with
\[
r\le \diam(\Delta(\boldsymbol{\alpha}))<2r
\]
that meet $B(x,r)$.
For each $\Delta(\boldsymbol{\alpha})\in\mathscr{F}$ choose an inscribed disk $D(\boldsymbol{\alpha})\subset \Delta(\boldsymbol{\alpha})$ from (i).
Then $D(\boldsymbol{\alpha})\subset B(x,3r)$ and $\mathrm{rad}(D(\boldsymbol{\alpha}))\ge c\,r$. We claim that the disks $D(\boldsymbol{\alpha})$ are pairwise disjoint.
Assume $\Delta(\boldsymbol{\alpha})^\circ\cap \Delta(\boldsymbol{\beta})^\circ\neq\varnothing$.
Let $\boldsymbol{\nu}$ be the maximal common prefix of $\boldsymbol{\alpha},\boldsymbol{\beta}$.
If after removing $\boldsymbol{\nu}$ the next digits are distinct, say
$\boldsymbol{\alpha}=(\boldsymbol{\nu},\eta,\dots)$ and $\boldsymbol{\beta}=(\boldsymbol{\nu},\eta',\dots)$ with $\eta\neq\eta'$, then applying
$h_{\boldsymbol{\nu}}^{-1}$ to a point of intersection yields a point in
$h_{\eta}(I_d^\circ)\cap h_{\eta'}(I_d^\circ)$, contradicting Lemma~\ref{lm:trapping}(iii).
Thus one of the two words is a prefix of the other.
Assume, for contradiction, that $\boldsymbol{\alpha}$ is a strict prefix of $\boldsymbol{\beta}$, so that
$\boldsymbol{\beta}=(\boldsymbol{\alpha},\eta,\dots)$ for some $\eta\in\Ocal_d^{\ge A_0}$.
Then $\Delta(\boldsymbol{\beta})\subset \Delta(\boldsymbol{\alpha}\eta)$, and Lemma~\ref{lm:NT25_geom}\textup{\ref{it:diam_deriv}} gives
\begin{equation}
\diam\bigl(\Delta(\boldsymbol{\alpha}\eta)\bigr)
\le
C_{\diam}\,\abs{D h_{\boldsymbol{\alpha}\eta}(0)}
=
C_{\diam}\,\abs{D h_{\boldsymbol{\alpha}}(h_{\eta}(0))}\,\abs{D h_{\eta}(0)}.
    \label{eqn:a}
\end{equation}
By bounded distortion for $h_{\boldsymbol{\alpha}}$ (Lemma~\ref{lm:NT25_geom}\textup{\ref{it:koebe_dist}}),
\(
\abs{D h_{\boldsymbol{\alpha}}(h_{\eta}(0))}
\le
K_{\mathrm{dist}}\,\abs{D h_{\boldsymbol{\alpha}}(0)}.
\) Using Lemma~\ref{lm:NT25_geom}\textup{\ref{it:diam_deriv}}, now for $\Delta(\boldsymbol{\alpha})$, we obtain
\(
\abs{D h_{\boldsymbol{\alpha}}(0)}
\le
C_{\diam}\,\diam\bigl(\Delta(\boldsymbol{\alpha})\bigr).
\) Therefore
\[
\diam\bigl(\Delta(\boldsymbol{\beta})\bigr)
\le
\diam\bigl(\Delta(\boldsymbol{\alpha}\eta)\bigr)
\le
C_{\diam}^{2}K_{\mathrm{dist}}\,\abs{D h_{\eta}(0)}\,\diam\bigl(\Delta(\boldsymbol{\alpha})\bigr).
\]
by \eqref{eqn:a}. By Lemma~\ref{lm:trapping}(v), $\abs{D h_{\eta}(0)}\le \gamma$ and
\(
C_{\diam}^{2}K_{\mathrm{dist}}\gamma\le 1/2,
\)
and so
\[
\diam\bigl(\Delta(\boldsymbol{\beta})\bigr)
\le
\tfrac12\diam\bigl(\Delta(\boldsymbol{\alpha})\bigr)
< r,
\]
since $\diam(\Delta(\boldsymbol{\alpha}))<2r$ for every member of $\mathscr F$. This contradicts $r\le \diam(\Delta(\boldsymbol{\beta}))$.
Therefore strict prefix cannot occur, so the words are incomparable and the interiors are disjoint. Hence the disks are disjoint. Each disk $D(\boldsymbol{\alpha})$ has radius at least $c r$, hence area at least $\pi(c r)^2$.
Since all $D(\boldsymbol{\alpha})$ lie in $B(x,3r)$, a packing bound yields
\[
\#\mathscr{F}\le \frac{\mathrm{area}(B(x,3r))}{\pi(c r)^2}=\frac{9}{c^2}.
\]
Taking $Q:=\lceil 9/c^2\rceil$ gives (ii).
\end{proof}

\begin{lm}\label{lm:prefix_conformal}
For every admissible finite word $\mathbf{b}=(b_1,\dots,b_k)\in\Ocal_d^k$,
the map $h_{\mathbf{b}}:=h_{b_1}\circ\cdots\circ h_{b_k}$ is a M\"obius transformation which is holomorphic and injective
on an open neighborhood of $I_d$.
In particular, $h_{\mathbf{b}}$ is bi-Lipschitz on $I_d$ and preserves Hausdorff dimension on sets
$E\subset I_d$:
\[
\dim_H(h_{\mathbf{b}}(E))=\dim_H(E).
\]
\end{lm}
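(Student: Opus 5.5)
The plan is to use Lemma~\ref{lm:prefix_mobius_formula} to write $h_{\mathbf b}$ explicitly as
\[
h_{\mathbf b}(z)=\frac{P_{k-1}z+P_k}{Q_{k-1}z+Q_k}.
\]
Its determinant is $\pm1$, since it is a product of the matrices $\begin{pmatrix}0&1\\1&b_j\end{pmatrix}$, each of determinant $-1$. Hence $h_{\mathbf b}$ is a M\"obius transformation, and it is holomorphic and injective on $\widehat{\C}$ minus its single pole $z_0=-Q_k/Q_{k-1}$. If $Q_{k-1}=0$ there is no finite pole and $h_{\mathbf b}$ is affine, which is trivially fine.

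The main point is to show that the pole lies at positive distance from $I_d$. We have $\abs{z_0}=\abs{Q_k}/\abs{Q_{k-1}}$. Since $\mathbf b$ is admissible, pick $x\in\ip{\mathbf b}$; Lemma~\ref{lm:prefix_mobius_formula} identifies $(Q_{k-1},Q_k)$ with the denominator pair of $x$. For $k\ge2$, Lemma~\ref{lm:denom_growth} gives $\abs{Q_{k-1}}<\abs{Q_k}$, so $\abs{z_0}>1$. For $k=1$ we have $Q_0=1$ and $Q_1=b_1$; admissibility forces $b_1\ne0$, so $\abs{b_1}\ge1$ and again $\abs{z_0}\ge1$. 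Lemma~\ref{lm:Id_ball_ext} gives $I_d\subset B(0,R_d)$ with $R_d<1$. Therefore $\dist(z_0,I_d)\ge 1-R_d>0$, and $h_{\mathbf b}$ is holomorphic and injective on the open neighbourhood $U:=B(0,(1+R_d)/2)$ of $I_d$. The delicate step is precisely this use of admissibility, which is why the hypothesis appears. A possible gap is the range $n\ge1$ in Lemma~\ref{lm:denom_growth}, but it covers $k-1\ge1$, and $k=1$ was handled directly.

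For bi-Lipschitz: on the compact set $I_d$ we have
\[
h_{\mathbf b}'(z)=\frac{\pm1}{(Q_{k-1}z+Q_k)^2},
\]
which is continuous and nonvanishing, so it is bounded above and below there. Because $I_d$ is convex (Lemma~\ref{lm:Id_convex}), the segment between any two points of $I_d$ stays in $I_d$. Integrating $h_{\mathbf b}'$ along that segment gives $\abs{h_{\mathbf b}(z)-h_{\mathbf b}(w)}\le L\abs{z-w}$.

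For the reverse inequality, use the fact that $h_{\mathbf b}^{-1}$ is also M\"obius and holomorphic near the compact set $h_{\mathbf b}(I_d)$. Argue by contradiction: suppose there are pairs $z_n\ne w_n$ in $I_d$ with $\abs{h_{\mathbf b}(z_n)-h_{\mathbf b}(w_n)}/\abs{z_n-w_n}\to0$. Pass to convergent subsequences. If the limits are distinct, injectivity is contradicted. If the limits coincide at some $z$, the ratio tends to $\abs{h_{\mathbf b}'(z)}\ne0$, again a contradiction. Alternatively, one can compute directly:
\[
\abs{h_{\mathbf b}(z)-h_{\mathbf b}(w)}=\frac{\abs{z-w}}{\abs{Q_{k-1}z+Q_k}\,\abs{Q_{k-1}w+Q_k}},
\]
and the denominators are bounded on $I_d$. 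Either route gives a bi-Lipschitz bound. Since bi-Lipschitz maps preserve Hausdorff dimension, $\dim_H h_{\mathbf b}(E)=\dim_H E$ for every $E\subset I_d$.
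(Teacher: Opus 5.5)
Your proof is correct and follows the paper's argument. You place the pole $-Q_k/Q_{k-1}$ outside $B(0,R_d)$, using $\abs{b_1}\ge 1$ when $k=1$ and Lemma~\ref{lm:denom_growth} when $k\ge 2$, and then get the bi-Lipschitz bounds on $I_d$ by compactness. The difference is cosmetic: the paper gets both Lipschitz constants from the continuous, positive difference-quotient function on $I_d\times I_d$, whereas your identity $\abs{h_{\mathbf b}(z)-h_{\mathbf b}(w)}=\abs{z-w}/(\abs{Q_{k-1}z+Q_k}\,\abs{Q_{k-1}w+Q_k})$ gives them directly.
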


\begin{proof}
By Lemma~\ref{lm:prefix_mobius_formula},
\[
h_{\mathbf b}(z)=\frac{P_{k-1}z+P_k}{Q_{k-1}z+Q_k},
\qquad
z_*:=-\frac{Q_k}{Q_{k-1}},
\]
and $z_*$ is the unique pole of $h_{\mathbf b}$. If $k=1$, then $z_*=-b_1$, so $\abs{z_*}=\abs{b_1}\ge 1>R_d$.
Assume $k\ge 2$.
Choose $x\in I_d\setminus\mathcal N_d$ with $(a_1(x),\dots,a_k(x))=\mathbf b$.
Then $(Q_{k-1},Q_k)$ is the denominator pair of the convergents of $x$, and Lemma~\ref{lm:denom_growth} gives
\[
\abs{Q_{k-1}}<\abs{Q_k}.
\]
Hence
\[
\abs{z_*}=\frac{\abs{Q_k}}{\abs{Q_{k-1}}}>1>R_d.
\]
By Lemma~\ref{lm:Id_ball_ext}, we have $I_d\subset B(0,R_d)$, so the pole lies outside the open ball $B(0,R_d)$.
Therefore $h_{\mathbf b}$ is holomorphic on $B(0,R_d)$, which is an open neighborhood of $I_d$.
Since a nonconstant M\"obius transformation is injective on its domain of holomorphy,
$h_{\mathbf b}$ is injective on a neighborhood of $I_d$. Define
\[
\Psi(z,w):=
\begin{cases}
\dfrac{\abs{h_{\mathbf b}(z)-h_{\mathbf b}(w)}}{\abs{z-w}}, & z\neq w,\\[2ex]
\abs{D h_{\mathbf b}(z)}, & z=w.
\end{cases}
\]
Since $h_{\mathbf b}$ is $C^1$ on a neighborhood of $I_d$, the function $\Psi$ is continuous on $I_d\times I_d$.
Because $h_{\mathbf b}$ is injective on a neighborhood of $I_d$, we have $\Psi(z,w)>0$ for all $(z,w)\in I_d\times I_d$.
Compactness therefore gives constants $0<m_{\mathbf b}\le M_{\mathbf b}<\infty$ such that
\[
m_{\mathbf b}\le \Psi(z,w)\le M_{\mathbf b}
\qquad((z,w)\in I_d\times I_d).
\]
Equivalently,
\[
m_{\mathbf b}\abs{z-w}\le \abs{h_{\mathbf b}(z)-h_{\mathbf b}(w)}\le M_{\mathbf b}\abs{z-w}
\qquad(z,w\in I_d),
\]
so $h_{\mathbf b}$ is bi-Lipschitz on $I_d$.
Bi-Lipschitz invariance gives
\[
\dim_H(h_{\mathbf b}(E))=\dim_H(E)
\qquad(E\subset I_d).
\]
\end{proof}

\begin{lm}\label{lm:local_prefix}
Let $x\in I_d\setminus\mathcal N_d$ and $n\ge 1$.
Write
\(
\mathbf b:=(a_1(x),\dots,a_n(x))\) and $y:=T_d^n(x)\in I_d^\circ$. Then there exists an open neighborhood $D\subset I_d^\circ$ of $y$ such that for every $z\in D$, we have
\(
T_d^n\bigl(h_{\mathbf b}(z)\bigr)=z
\)
and
\(
a_j\bigl(h_{\mathbf b}(z)\bigr)=a_j(x)
\) for $1\le j\le n$. In particular,
\(
h_{\mathbf b}(D)\subset \ip{\mathbf b}.
\)
\end{lm}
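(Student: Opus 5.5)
The proof is a finite backward induction along the orbit of $x$. It uses only two facts: each $h_\alpha$ is continuous where it is defined, and the translates $\alpha+I_d^\circ$ are open. Write $x_j:=T_d^j(x)$ for $0\le j\le n$, so $x_n=y$. Since $x\notin\mathcal N_d$, no $x_j$ with $j\le n-1$ lies in $B_d\cup\{0\}$. Hence $1/x_{j-1}\notin\mathcal B_d$, which means $1/x_{j-1}\in a_j(x)+\mathrm{int}(I_d')=a_j(x)+I_d^\circ$, and therefore
\[
x_j=\frac1{x_{j-1}}-a_j(x)\in I_d^\circ\qquad(1\le j\le n).
\]
The case $j=n$ gives the asserted membership $y\in I_d^\circ$.

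Next, for $z$ near $y$ I will define the backward orbit $w_n(z):=z$ and $w_{j-1}(z):=h_{a_j(x)}(w_j(z))=1/(w_j(z)+a_j(x))$, so that $w_0(z)=h_{\mathbf b}(z)$. At $z=y$ we have $w_j(y)=x_j$, and $x_j+a_j(x)=1/x_{j-1}\neq0$. So each $w_j$ is a finite composition of Möbius maps, holomorphic and in particular continuous in a neighbourhood of $y$. I then take
\[
D:=\bigcap_{j=1}^{n} w_j^{-1}\bigl(I_d^\circ\bigr),
\]
intersected with a small disc around $y$ on which all $w_j$ are defined. This $D$ is open, contains $y$ because every $x_j\in I_d^\circ$ for $j\ge1$, and lies in $I_d^\circ$ because $w_n=\mathrm{id}$.

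For $z\in D$ and $1\le j\le n$, we get $1/w_{j-1}(z)=w_j(z)+a_j(x)\in a_j(x)+I_d^\circ$. Since the selection $[\cdot]$ is unique on the open translates, $[1/w_{j-1}(z)]=a_j(x)$, so $w_{j-1}(z)\notin B_d\cup\{0\}$ and $T_d(w_{j-1}(z))=w_j(z)$. Chaining these gives $T_d^{n}(h_{\mathbf b}(z))=z$ and $a_j(h_{\mathbf b}(z))=a_j(x)$ for $1\le j\le n$. This would yield $h_{\mathbf b}(D)\subset\ip{\mathbf b}$, except that membership in $\ip{\mathbf b}$ as defined also requires $h_{\mathbf b}(z)\in I_d\setminus\mathcal N_d$.

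\textbf{Main obstacle.} Two points are needed to make $h_{\mathbf b}(z)$ a point of the cylinder rather than of $\C$.
\begin{itemize}
\item \textbf{Landing in $I_d$.} We need $w_0(z)=h_{\mathbf b}(z)\in I_d$, so that $T_d$ and the digits are defined there. If $x\in I_d^\circ$, I add $w_0^{-1}(I_d^\circ)$ to the intersection defining $D$ and the argument closes. If $x\in\partial I_d$, continuity alone does not prevent $h_{\mathbf b}(z)$ from leaving $I_d$.
\begin{itemize}
\item \textbf{First attempt.} Show that a point $x$ on $\partial I_d$ whose orbit avoids $B_d$ still has $h_{a_1(x)}$ mapping a neighbourhood of $x_1$ into $I_d$. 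I would try this by invoking the explicit polygonal structure from \cite{BP25,ENN23}: the image $h_{a_1}(a_1+I_d)$ meets $\partial I_d$ along arcs, and one can check which arcs are admissible.
\item \textbf{Fallback.} Read $D$ as the open set $D\cap h_{\mathbf b}^{-1}(I_d^\circ)$. This set is nonempty because $h_{\mathbf b}$ is an open map and $I_d=\overline{I_d^\circ}$ by Lemma~\ref{lm:Id_convex}, and $y$ lies in its closure. That weaker statement suffices for later dimension arguments via Lemma~\ref{lm:prefix_conformal}.
\end{itemize}
\item \textbf{Removing $\mathcal N_d$.} Points $z\in D\cap\mathcal N_d$ have $h_{\mathbf b}(z)\in\mathcal N_d$ and are excluded from $\ip{\mathbf b}$. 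The set of such $z$ is Lebesgue-null by Lemma~\ref{lm:null_Nd}, so the inclusion $h_{\mathbf b}(D)\subset\ip{\mathbf b}$ holds modulo $\mathcal N_d$, consistent with the convention that cylinders are defined modulo $\mathcal N_d$.
\end{itemize}
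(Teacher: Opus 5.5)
Your backward-orbit construction is the paper's proof. The paper sets $\psi_j=h_{a_j(x)}\circ\cdots\circ h_{a_n(x)}$, notes $\psi_j(y)=T_d^{j-1}(x)\in I_d^\circ$, shrinks $D\subset I_d^\circ$ so that every $\psi_j(D)\subset I_d^\circ$, and reads off $[1/x_j]=a_j(x)$ from uniqueness of the selection on open translates. The two obstacles you isolate are genuine, and the paper's proof passes over both:
\begin{itemize}
\item It asserts $x_1=h_{\mathbf b}(z)\in I_d\setminus(B_d\cup\{0\})$. But $1/x_1\in a_1(x)+I_d^\circ$ does not put $x_1$ in $I_d$.
\item It concludes $h_{\mathbf b}(z)\in\ip{\mathbf b}$ for every $z\in D$. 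But $D$ meets the dense set $\Q(\sqrt{-d})$. For such $z$ the point $h_{\mathbf b}(z)\in\Q(\sqrt{-d})$ has a terminating expansion, so it lies in $\mathcal N_d$. Under the convention $a_j=0$ on $\mathcal N_d$, even the digit identities then fail.
\end{itemize}

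Your ``first attempt'' for $x\in\partial I_d$ cannot succeed, so this is a defect of the statement rather than a gap in your argument. Since $h_{\mathbf b}$ is a nonconstant M\"obius map, it is open. Hence for every open $D\ni y$, the set $h_{\mathbf b}(D)$ is an open neighbourhood of $x$, and it is not contained in $I_d$ when $x\in\partial I_d$. Such $x$ exist. For $d=1$, let $x=\tfrac12+\tfrac{\sqrt2-1}{2}i\in\partial I_1$. Then $1/x=(1+\tfrac1{\sqrt2})-\tfrac1{\sqrt2}i$ lies in the open tile of $2-i$, and $T_1(x)=p:=(-1+i)(1-\tfrac1{\sqrt2})$. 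The point $p$ has period $2$, with $T_1(p)=(1+i)(1-\tfrac1{\sqrt2})$, and $1/p$ and $1/T_1(p)$ lie in the open tiles of $-2-2i$ and $2-2i$. Hence $x\notin\mathcal N_1$, and the lemma fails with $n=1$. The tenable statement assumes $x\in I_d^\circ$ and concludes $h_{\mathbf b}(D\setminus\mathcal N_d)\subset\ip{\mathbf b}$. Adding $w_0^{-1}(I_d^\circ)$ to $D$, as you do, proves it.

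Two of your remarks need correcting.
\begin{itemize}
\item The Lebesgue-nullness of $D\cap\mathcal N_d$ is beside the point, because the sets $K_0$ to which the lemma is later applied are themselves null. What is needed, and what you have, is the pointwise fact that $z\notin\mathcal N_d$ implies $h_{\mathbf b}(z)\notin\mathcal N_d$. This follows from $T_d^{n+m}(h_{\mathbf b}(z))=T_d^m(z)$ together with your intermediate points avoiding $B_d\cup\{0\}$.
\item The fallback does not ``suffice downstream.'' The proofs of Theorems~\ref{thm:HirstMain} and~\ref{thm:pattern_intro} fit a whole cylinder $Y_\ell\ni y_0$ into $D$ (cf.\ Lemma~\ref{lm:cyl_shrink}), and that requires $D$ to be a neighbourhood of $y_0$. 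The counterexample above has bounded digits, so it does not by itself break those theorems, where $\abs{a_n(x_0)}\to\infty$. What they actually need is $x_0\in I_d^\circ$, and neither the paper nor your proposal verifies this.
\end{itemize}
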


\begin{proof}
For $2\le j\le n$, set
\[
\psi_j:=h_{a_j(x)}\circ\cdots\circ h_{a_n(x)}.
\]
Since $x\notin\mathcal N_d$, we have $T_d^m(x)\notin B_d\cup\{0\}$ for $0\le m\le n-1$, and hence
\[
T_d^{m+1}(x)=\frac{1}{T_d^m(x)}-a_{m+1}(x)\in I_d^\circ
\qquad(0\le m\le n-1).
\]
Therefore
\[
\psi_j(y)=T_d^{j-1}(x)\in I_d^\circ
\qquad(2\le j\le n).
\]
Each $\psi_j$ is holomorphic on a neighborhood of $I_d$ by Lemma~\ref{lm:prefix_conformal}.
Since $I_d^\circ$ is open and the family $\{\psi_j:2\le j\le n\}$ is finite, there exists an open neighborhood $D\subset I_d^\circ$ of $y$ such that
\[
\psi_j(D)\subset I_d^\circ
\qquad(2\le j\le n).
\]
Fix $z\in D$ and define
\[
x_{n+1}:=z,
\qquad
x_j:=h_{a_j(x)}(x_{j+1})
\qquad(1\le j\le n).
\]
Then $x_j=\psi_j(z)\in I_d^\circ$ for $2\le j\le n$, while $x_{n+1}=z\in I_d^\circ$.
Hence, for $1\le j\le n$,
\[
\frac{1}{x_j}=a_j(x)+x_{j+1}\in a_j(x)+I_d^\circ.
\]
It follows that $x_j\in I_d\setminus(B_d\cup\{0\})$, that $a_1(x_j)=a_j(x)$, and that $T_d(x_j)=x_{j+1}$ for every $1\le j\le n$.
Since $x_1=h_{\mathbf b}(z)$, repeated application of $T_d$ gives
\[
T_d^n\bigl(h_{\mathbf b}(z)\bigr)=z
\]
and
\[
a_j\bigl(h_{\mathbf b}(z)\bigr)=a_j(x)
\qquad(1\le j\le n).
\]
Thus $h_{\mathbf b}(z)\in \ip{\mathbf b}$.
\end{proof}

\begin{lm}\label{lm:cofinite_tail}
Let $S\subset\Ocal_d$ be infinite set and denote $S':=S\cap\Ocal_d^{\ge A_0}$. Define
\[
E(S'):=\set{\pi(\omega):\ \omega\in (S')^\N,\ \abs{\omega_n}\to\infty}.
\]
Then $E(S')\subset F_d(S)$.
Moreover,
\[
F_d(S)\subset \bigcup_{\mathbf{b}} h_{\mathbf{b}}\bigl(E(S')\bigr),
\]
where the union ranges over all possibly empty admissible finite words $\mathbf{b}$.
\end{lm}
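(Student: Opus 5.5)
The proof has two parts: the inclusion $E(S')\subset F_d(S)$, which is essentially a reformulation of Lemma~\ref{lm:fullshift}, and the covering of $F_d(S)$, which comes from cutting each digit sequence at a finite time after which all digits are large.

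\emph{First inclusion.} Let $x=\pi(\omega)$ with $\omega\in(S')^\N$ and $\abs{\omega_n}\to\infty$. Since $S'\subset\Ocal_d^{\ge A_0}$, Lemma~\ref{lm:fullshift} gives $x\in I_d\setminus\mathcal N_d$ and $a_n(x)=\omega_n$ for all $n$. Hence every $a_n(x)$ lies in $S'\subset S$ and $\abs{a_n(x)}=\abs{\omega_n}\to\infty$, so $x\in F_d(S)$. This direction needs nothing beyond that lemma.

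\emph{Second inclusion.} Let $x\in F_d(S)$. Because $\abs{a_n(x)}\to\infty$, there is a smallest $N\ge0$ with $\abs{a_n(x)}\ge A_0$ for all $n>N$. Set $\mathbf b:=(a_1(x),\dots,a_N(x))$; it is empty if $N=0$, and otherwise admissible because $x\in\ip{\mathbf b}$. Put $y:=T_d^N(x)$. Then:
\begin{enumerate}[label=(\alph*), leftmargin=3.0em]
\item $y\in I_d\setminus\mathcal N_d$, since $\mathcal N_d$ is backward invariant: if $T_d^N(x)\in\mathcal N_d$ then $x\in\mathcal N_d$, directly from the definition of $\mathcal N_d$ as a union of preimages.
\item $a_n(y)=a_{n+N}(x)$ for every $n$, by the definition of the digits.
\end{enumerate}
So every digit of $y$ lies in $S\cap\Ocal_d^{\ge A_0}=S'$, and these digits tend to infinity in modulus. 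By the converse part of Lemma~\ref{lm:fullshift}, $y=\pi(\omega)$ with $\omega:=(a_{N+1}(x),a_{N+2}(x),\dots)\in(S')^\N$ and $\abs{\omega_n}\to\infty$, so $y\in E(S')$.

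It remains to show $x=h_{\mathbf b}(y)$. For $x\notin\mathcal N_d$ and $0\le m<N$ we have $T_d^{m+1}(x)=1/T_d^m(x)-a_{m+1}(x)$. Equivalently, $T_d^m(x)=h_{a_{m+1}(x)}(T_d^{m+1}(x))$. Composing these identities for $m=N-1,\dots,0$ gives $x=h_{a_1(x)}\circ\cdots\circ h_{a_N(x)}(y)=h_{\mathbf b}(y)$; this is the same identity used in the proof of Lemma~\ref{lm:fullshift}. Hence $x\in h_{\mathbf b}(E(S'))$, which proves the second inclusion.

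No step is a genuine obstacle. The only points needing care are the backward invariance of $\mathcal N_d$ in (a) and the convention that the empty word is allowed, with $h_{\varnothing}=\mathrm{id}$.
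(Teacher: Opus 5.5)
Your proposal is correct and follows essentially the same route as the paper's proof. The first inclusion comes from Lemma~\ref{lm:fullshift}, and the second comes from cutting the digit sequence at a time $N$ after which all digits lie in $S'$, applying the converse part of Lemma~\ref{lm:fullshift} to $y=T_d^N(x)$, and writing $x=h_{\mathbf b}(y)$. You also spell out two things the paper leaves implicit: the backward invariance of $\mathcal N_d$, and the inverse-branch identity giving $x=h_{\mathbf b}(y)$.
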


\begin{proof}
If $x\in E(S')$, then $x=\pi(\omega)$ for some $\omega\in(S')^\N$ with $\abs{\omega_n}\to\infty$.
By Lemma~\ref{lm:fullshift}, $a_n(x)=\omega_n\in S'\subset S$ for all $n$ and $\abs{a_n(x)}\to\infty$, hence $x\in F_d(S)$. Conversely, if $x\in F_d(S)$ then $\abs{a_n(x)}\to\infty$, so $a_n(x)\in S'$ for all $n\ge N$ for some $N\ge 2$. Set $\mathbf{b}:=(a_1(x),\dots,a_{N-1}(x))$ (admissible) and $y:=T_d^{N-1}(x)$.
Then $a_n(y)=a_{N-1+n}(x)\in S'$ for all $n\ge 1$ and $\abs{a_n(y)}\to\infty$, so $y\in E(S')$ by Lemma~\ref{lm:fullshift}.
Since $x=h_{\mathbf{b}}(y)$, this gives the desired inclusion.
\end{proof}

\begin{proof}[Proof of Theorem~\ref{thm:HirstMain}]
We first pass to the cofinite large-digit subsystem $S':=S\cap\Ocal_d^{\ge A_0}$, identify its tail set with the $2$-decaying IFS $\Phi_d^{\ge A_0}$, and then transfer the dimension statement back to $F_d(S)$ and $F_d(S,f)$ through the finite-prefix decomposition from Lemma~\ref{lm:cofinite_tail}.
\noindent\textit{(i) Unrestricted case.} Fix $d\in\{1,2,3,7,11\}$. Let $A_0$ be as in Lemma~\ref{lm:trapping}. By Lemma~\ref{lm:trapping} and~\ref{lm:2decay}, the IFS $\Phi_d^{\ge A_0}=\{h_\alpha\}_{\alpha\in\Ocal_d^{\ge A_0}}$ on $\Delta_d:=I_d$
satisfies \textup{\ref{cond:A1}}--\textup{\ref{cond:A4}} and is $2$-decaying with respect to $\alpha\mapsto\abs{\alpha}$.
Write $L':=L'(\Phi_d^{\ge A_0})$ for the subset of the limit set consisting of points with a unique address.
For an infinite subset $A\subset\Ocal_d^{\ge A_0}$, define
\[
F_{\Phi_d^{\ge A_0}}(A)
:=
\set{x\in L':\ \pi^{-1}(x)=\{\omega(x)\},\ \omega(x)\in A^\N,\ \abs{\omega_n(x)}\to\infty},
\]
and, for a cutoff function $g:\N\to[1,\infty)$,
\[
F_{\Phi_d^{\ge A_0}}(A,g)
:=
\set{x\in F_{\Phi_d^{\ge A_0}}(A):\ \abs{\omega_n(x)}\le g(n)\ \forall n\ge 1}.
\]
Let $S\subset\Ocal_d$ be infinite and set $S':=S\cap\Ocal_d^{\ge A_0}$.
Since $\Ocal_d^{\ge A_0}$ is cofinite in $\Ocal_d$, the set $S\setminus S'$ is finite. By Lemma~\ref{lm:tau_basic}(i), $\tau(S')=\tau(S)$.
Let $E(S')$ be as in Lemma~\ref{lm:cofinite_tail}. Since every point of $E(S')$ has an address in $(S')^\N$, we have
\(
E(S')\setminus F_{\Phi_d^{\ge A_0}}(S')
\subset
L(\Phi_d^{\ge A_0})\setminus L'(\Phi_d^{\ge A_0}).
\) Since $\Phi_d^{\ge A_0}$ satisfies \textup{(A4)}, Lemma~\ref{lm:coding_countable} implies that
\(
E(S')\setminus F_{\Phi_d^{\ge A_0}}(S')
\) is countable. Therefore, by Theorem~\ref{thm:hirst_external}, Lemma~\ref{lm:tau_absS}, and Lemma~\ref{lm:tau_basic}(i), we have
\[
\dim_H E(S')
=\dim_H F_{\Phi_d^{\ge A_0}}(S')
=\frac{\tau(\abs{S'})}{2}
=\frac{\tau(S')}{2}
=\frac{\tau(S)}{2}.
\]
By Lemma~\ref{lm:cofinite_tail}, $E(S')\subset F_d(S)$, hence $\dim_H F_d(S)\ge \tau(S)/2$.
Conversely, Lemma~\ref{lm:cofinite_tail} gives
\(
F_d(S)\subset \bigcup_{\mathbf{b}} h_{\mathbf{b}}\bigl(E(S')\bigr),
\) with $\mathbf{b}$ ranging over possibly empty admissible finite words, and with $h_{\emptyset}=\mathrm{id}$.
Since $E(S')\subset I_d$ and each $h_{\mathbf{b}}$ is bi-Lipschitz on $I_d$ by Lemma~\ref{lm:prefix_conformal}, we obtain
\[
\dim_H h_{\mathbf{b}}\bigl(E(S')\bigr)=\dim_H E(S')=\frac{\tau(S)}{2}.
\]
Taking the countable union yields $\dim_H F_d(S)\le \tau(S)/2$, hence $\dim_H F_d(S)=\tau(S)/2$.
\smallskip
\noindent\textit{(ii) Cutoff case.}
Fix $x_0\in F_d(S,f)$.
Choose $N_0\ge 2$ such that $a_n(x_0)\in S'$ and $f(n)\ge m_{S'}:=\min\{\abs{\alpha}:\alpha\in S'\}$ for all $n\ge N_0$.
Choose $\alpha_*\in S'$ with $\abs{\alpha_*}=m_{S'}$.
Set $\mathbf{b}:=(a_1(x_0),\dots,a_{N_0-1}(x_0))$ and define the shifted cutoff $g(n):=f(n+N_0-1)$.
Let
\[
E(S',g):=\set{\pi(\omega):\ \omega\in (S')^\N,\ \abs{\omega_n}\to\infty,\ \abs{\omega_n}\le g(n)\ \forall n}.
\]
Since $g(n)\ge m_{S'}$ for every $n\ge 1$, we have
\[
\alpha_*\in \set{\alpha\in S':\ \abs{\alpha}\le g(n)}
\qquad(n\ge 1).
\]
Hence Theorem~\ref{thm:hirst_external} applies to $F_{\Phi_d^{\ge A_0}}(S',g)$.
Again Lemma~\ref{lm:coding_countable} shows that
\(
E(S',g)\setminus F_{\Phi_d^{\ge A_0}}(S',g)
\) is countable, and therefore
\begin{equation}
\dim_H E(S',g)
=\dim_H F_{\Phi_d^{\ge A_0}}(S',g)
=\frac{\tau(\abs{S'})}{2}
=\frac{\tau(S)}{2}
    \label{eqn:1234}
\end{equation}
by Lemma~\ref{lm:tau_absS}. Set $y_0:=T_d^{N_0-1}(x_0)$ and write $\omega^0:=(a_1(y_0),a_2(y_0),\dots)\in (S')^\N$.
By Lemma~\ref{lm:fullshift}, $y_0=\pi(\omega^0)\in B(0,r_d)\subset I_d^\circ$.
By Lemma~\ref{lm:local_prefix}, there exists an open neighborhood $D\subset I_d^\circ$ of $y_0$ such that
\[
h_{\mathbf b}(D)\subset \ip{\mathbf b}
\qquad\text{and}\qquad
T_d^{N_0-1}\bigl(h_{\mathbf b}(y)\bigr)=y\qquad (y\in D).
\]
For $\ell\ge 1$ define
\[
Y_\ell:=h_{a_{1}(y_0)}\circ\cdots\circ h_{a_{\ell}(y_0)}(I_d).
\]
Since $y_0=\pi(\omega^0)$, we have $y_0\in Y_\ell$ for every $\ell$.
By Lemma~\ref{lm:trapping}(v), we have
\(
\diam(Y_\ell)\le \gamma^\ell \diam(I_d),
\)
where $\gamma\in(0,1)$ is the uniform contraction constant.
Hence $\diam(Y_\ell)\to 0$, and therefore there exists $\ell\ge 1$ such that $Y_\ell\subset D$.
Since $\ell\ge1$, we have
\[
Y_\ell\subset h_{a_1(y_0)}(I_d)\subset B(0,r_d)\subset I_d
\]
by Lemma~\ref{lm:trapping}(i) and Lemma~\ref{lm:geometry_fd}. Define $g_\ell(n):=g(n+\ell)$ and set
\[
E(S',g_\ell):=\set{\pi(\omega):\ \omega\in (S')^\N,\ \abs{\omega_n}\to\infty,\ \abs{\omega_n}\le g_\ell(n)\ \forall n}.
\]
Since $g_\ell(n)=f(n+N_0+\ell-1)\ge m_{S'}$ for every $n\ge 1$, we have
\[
\dim_H E(S',g_\ell)=\frac{\tau(S)}{2}.
\]
by \eqref{eqn:1234}. Set
\(
K_0:=h_{a_{1}(y_0)}\circ\cdots\circ h_{a_{\ell}(y_0)}\bigl(E(S',g_\ell)\bigr)\subset Y_\ell.
\)
We claim that
\begin{equation}
    K_0\subset E(S',g).
    \label{eqn:claim1}
\end{equation}
Indeed, if $y\in K_0$ then $y=h_{a_{1}(y_0)}\circ\cdots\circ h_{a_{\ell}(y_0)}(z)$
for some $z\in E(S',g_\ell)$, and Lemma~\ref{lm:fullshift} gives
\[
a_j(y)=a_j(y_0)\in S' \qquad(1\le j\le \ell),
\]
while for $n\ge 1$,
\(
a_{\ell+n}(y)=a_n(z)\in S'.
\) Moreover,
\(
\abs{a_j(y_0)}=\abs{a_{N_0-1+j}(x_0)}\le f(N_0-1+j)=g(j)
\) for $1\le j\le \ell$. Since $z\in E(S',g_\ell)$, we see that
\(
\abs{a_{\ell+n}(y)}=\abs{a_n(z)}\le g_\ell(n)=g(\ell+n)
\) for $n\geq 1$. Moreover $\abs{a_n(y)}\to\infty$ because $\abs{a_n(z)}\to\infty$. Hence $y\in E(S',g)$. Next we claim that 
\begin{equation}
h_{\mathbf{b}}(K_0)\subset F_d(S,f).
    \label{eqn:claim2}
\end{equation}
Let $y\in K_0$ and set $x:=h_{\mathbf{b}}(y)$. Since $y\in K_0\subset D$, Lemma~\ref{lm:local_prefix} gives
\[
x\in \ip{\mathbf b}
\qquad\text{and}\qquad
T_d^{N_0-1}(x)=y.
\]
Therefore
\[
a_j(x)=b_j=a_j(x_0)\in S,
\qquad
\abs{a_j(x)}=\abs{b_j}\le f(j)
\qquad(1\le j\le N_0-1),
\]
and for every $n\ge 1$,
\[
a_{N_0-1+n}(x)=a_n(y)\in S'\subset S,
\qquad
\abs{a_{N_0-1+n}(x)}=\abs{a_n(y)}\le g(n)=f(N_0-1+n).
\]
Since $y\in E(S',g)$ by \eqref{eqn:claim1}, we also have $\abs{a_n(y)}\to\infty$, and thus $\abs{a_n(x)}\to\infty$.
Hence $x\in F_d(S,f)$, proving $h_{\mathbf{b}}(K_0)\subset F_d(S,f)$. Since $E(S',g_\ell)\subset I_d$ and the length-$\ell$ prefix map
$h_{a_{1}(y_0)}\circ\cdots\circ h_{a_{\ell}(y_0)}$ is bi-Lipschitz on $I_d$ (Lemma~\ref{lm:prefix_conformal}),
\[
\dim_H K_0=\dim_H E(S',g_\ell)=\frac{\tau(S)}{2}.
\]
Since $K_0\subset Y_\ell\subset I_d$, the map $h_{\mathbf{b}}$ is also bi-Lipschitz on $K_0$ by Lemma~\ref{lm:prefix_conformal}. Hence
\[
\dim_H F_d(S,f)\ \ge\ \dim_H h_{\mathbf{b}}(K_0)=\dim_H K_0=\frac{\tau(S)}{2}.
\]
The opposite inequality follows from $F_d(S,f)\subset F_d(S)$.
\end{proof}

\begin{proof}[Proof of Corollary~\ref{cor:tau_examples_intro}]
(i) By Lemma~\ref{lm:lattice_count}(ii),
\[
\sum_{\alpha\in\Ocal_d\setminus\{0\}} \abs{\alpha}^{-t}
\asymp
\sum_{k\ge0} \#\set{\alpha:\ 2^k\le\abs{\alpha}<2^{k+1}}\,2^{-kt}
\asymp
\sum_{k\ge0} 2^{k(2-t)}.
\]
Hence the series converges if and only if $t>2$, so $\tau(\Ocal_d)=2$.
The dimension statement follows from Theorem~\ref{thm:HirstMain}.
(ii) Since $\sum_{n\in\Z\setminus\{0\}} |n|^{-t}$ converges if and only if $t>1$, we have $\tau(\Z)=1$.
Apply Theorem~\ref{thm:HirstMain}.
(iii) We have $\sum_{n\ge1} |n^p|^{-t}=\sum_{n\ge1} n^{-pt}$, which converges if and only if $pt>1$, hence $\tau(S)=1/p$.
Apply Theorem~\ref{thm:HirstMain}.
\end{proof}


\section{Sparse patterns}

Fix a cutoff function $f:\N\to[1,\infty)$ with $f(n)\to\infty$.
Fix $m\ge 1$. Let $(N_r)_{r\ge 1}$ be a strictly increasing sequence of integers and let $\mathbf{b}^{(r)}=(b^{(r)}_1,\dots,b^{(r)}_m)\in(\Ocal_d)^m$ be blocks.
Assume
\[
\min_{1\le j\le m}\abs{b^{(r)}_j}\to\infty
\qquad(r\to\infty),
\]
and the cutoff bounds
\[
\abs{b^{(r)}_j}\le f(N_r+j-1)
\qquad(r\ge 1,\ 1\le j\le m).
\]
Define
\[
F_d(S,f;\mathbf{b}^{(\cdot)},N_\cdot):=
\set{x\in F_d(S,f):\ (a_{N_r}(x),\dots,a_{N_r+m-1}(x))=\mathbf{b}^{(r)}\ \text{for infinitely many }r}.
\]

We prove Theorem~\ref{thm:pattern_intro} by constructing a sparse non-autonomous subsystem.
We choose a reference point in the target set, select the variable levels from finite subsets with uniformly positive $s$-mass, and insert the prescribed blocks along a sparse subsequence whose cumulative contribution to the pressure is negligible.

\begin{lm}\label{lm:tail_select_cutoff}
Let $S\subset\Ocal_d$ be infinite.
Fix $t$ with $0<t<\tau(S)$ and choose $\varepsilon>0$ so that $s:=t+\varepsilon<\tau(S)$.
Let $f:\N\to[1,\infty)$ satisfy $f(n)\to\infty$.
Then there exist $n_0\ge 2$, finite sets $E_n\subset S$ for $n\ge n_0$, and a sequence $R_n\to\infty$ such that for all $n\ge n_0$,
\[
R_n\le \abs{\alpha}\le f(n)\qquad(\alpha\in E_n),
\qquad
1\le \sum_{\alpha\in E_n}(\abs{\alpha}+R_d)^{-s}\le 2,
\]
and moreover
\[
\lim_{n\to\infty}\frac{\log \# E_n}{n}=0.
\]
\end{lm}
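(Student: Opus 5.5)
Since $s<\tau(S)$, Lemma~\ref{lm:tau_basic}(ii) (with $c=R_d$) gives $\sum_{\alpha\in S_*}(\abs{\alpha}+R_d)^{-s}=\infty$. Consequently, for every $R\ge1$ the tail $\sum_{\alpha\in S,\ \abs{\alpha}\ge R}(\abs{\alpha}+R_d)^{-s}$ is infinite. We then build $E_n$ greedily. Each individual term satisfies $(\abs{\alpha}+R_d)^{-s}\le 1$, because $\abs{\alpha}\ge1$. So if we list the elements of $\{\alpha\in S:\abs{\alpha}\ge R\}$ in order of nondecreasing modulus, we can add them one at a time until the partial sum first reaches $1$. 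At that moment the sum lies in $[1,2)$, since the last term added is at most $1$. Call the resulting finite set $G(R)$, and put $M(R):=\max\{\abs{\alpha}:\alpha\in G(R)\}<\infty$.

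Next comes the choice of $R_n$ compatible with the cutoff. Set $\rho(n):=\sup\{R\ge1:\ M(R)\le f(n)\}$, taken over integer values of $R$ so that $G(R)$ is fixed. Since $f(n)\to\infty$ and $M(R)<\infty$ for each fixed $R$, we have $\rho(n)\to\infty$. Define $R_n$ to be the largest integer $R\le \min\{\rho(n),\log(n+1)\}$ with $M(R)\le f(n)$; this exists for $n\ge n_0$, where $n_0\ge 2$ is chosen so that $f(n)\ge M(1)$. Truncating by $\log(n+1)$ keeps $R_n$ from growing too quickly, while still giving $R_n\to\infty$. Now set $E_n:=G(R_n)$. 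Then every $\alpha\in E_n$ satisfies $R_n\le\abs{\alpha}\le M(R_n)\le f(n)$, and the $s$-mass of $E_n$ lies in $[1,2]$.

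The main obstacle is the subexponential cardinality bound. Every $\alpha\in E_n$ has $\abs{\alpha}\le M(R_n)\le f(n)$, but $f$ may grow arbitrarily fast, so this alone does not control $\#E_n$. Instead we use the mass bound. Each $\alpha\in E_n$ contributes $(\abs{\alpha}+R_d)^{-s}\ge (M(R_n)+1)^{-s}$, so $\#E_n\le 2(M(R_n)+1)^{s}$. The dependence on $n$ therefore runs only through $R_n$, and the real difficulty is making $M(R_n)$ grow subexponentially. We handle this by strengthening the cap on $R_n$. Because $R\mapsto M(R)$ is a fixed function determined by $S$ alone, we can choose a nondecreasing integer sequence $R_n\to\infty$ slowly enough that both $M(R_n)\le f(n)$ and $\log M(R_n)\le \sqrt n$ hold for all large $n$. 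Concretely, let $R_n$ be the largest integer $R\le n$ with $M(R)\le\min\{f(n),e^{\sqrt n}\}$. Such an $R$ exists once $n$ is large, and $R_n\to\infty$ because each fixed $R$ eventually satisfies the constraint. With this choice, $\log\#E_n\le \log 2+s\log(e^{\sqrt n}+1)=O(\sqrt n)$, hence $\log\#E_n/n\to0$. The role of $t$ and $\varepsilon$ is only to guarantee $s<\tau(S)$, which is what makes the divergence, and hence the greedy construction, possible.
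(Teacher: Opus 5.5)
Your argument is correct and takes essentially the paper's route: use divergence of $\sum_{\alpha\in S_*}(\abs{\alpha}+R_d)^{-s}$ to build greedy finite blocks of $s$-mass in $[1,2]$ above a growing threshold, then advance through these blocks slowly enough in $n$ that they respect the cutoff $f(n)$ and have subexponential cardinality. The difference is only bookkeeping. The paper assigns disjoint consecutive blocks $F_k$ to ranges $N_k\le n<N_{k+1}$ and imposes $\log\#F_k\le N_k/k$ directly, whereas you bound $\#E_n<2(M(R_n)+1)^s$ via the mass and cap $M(R_n)\le e^{\sqrt n}$. As you noticed, your preliminary $\log(n+1)$ truncation would not suffice on its own, but your final choice of $R_n$ is fine.
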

\begin{proof}
Let $A_0$ be as in Lemma~\ref{lm:trapping} and set $S':=S\cap \Ocal_d^{\ge A_0}$.
Since $\Ocal_d\setminus \Ocal_d^{\ge A_0}$ is finite, Lemma~\ref{lm:tau_basic}(i) gives $\tau(S')=\tau(S)$.
By Lemma~\ref{lm:tau_basic}(ii) with $c=R_d$,
\[
\sum_{\alpha\in S'_*}(\abs{\alpha}+R_d)^{-s}=\infty.
\]
Enumerate $S'_* = \{\xi_1,\xi_2,\dots\}$ so that $\abs{\xi_1}\le \abs{\xi_2}\le \cdots$.
Since $\abs{\xi_n}\to\infty$, the terms $(\abs{\xi_n}+R_d)^{-s}$ tend to $0$.
Hence we may choose inductively disjoint finite consecutive blocks
\[
F_k\subset S'_*,\qquad k\ge 1,
\]
such that
\[
1\le \sum_{\alpha\in F_k}(\abs{\alpha}+R_d)^{-s}\le 2
\qquad(k\ge 1),
\]
and
\[
\min_{\alpha\in F_k}\abs{\alpha}\to\infty
\qquad(k\to\infty).
\]
For each $k$ set
\[
m_k:=\min_{\alpha\in F_k}\abs{\alpha},
\qquad
M_k:=\max_{\alpha\in F_k}\abs{\alpha}.
\]
Since $f(n)\to\infty$, for each $k$ there exists $N_k$ such that
\[
f(n)\ge M_k\qquad(n\ge N_k).
\]
By enlarging $N_k$ if necessary, we may assume that $(N_k)_{k\ge 1}$ is strictly increasing and
\[
\log \# F_k\le \frac{N_k}{k}\qquad(k\ge 1).
\]
Define $n_0:=\max\{N_1,2\}$ and, for $n\ge n_0$,
\[
E_n:=F_k,
\qquad
R_n:=m_k
\qquad\text{whenever }N_k\le n < N_{k+1}.
\]
Then $E_n\subset S'\subset S$,
\[
R_n\le \abs{\alpha}\le f(n)\qquad(\alpha\in E_n),
\qquad
1\le \sum_{\alpha\in E_n}(\abs{\alpha}+R_d)^{-s}\le 2,
\]
and $R_n\to\infty$ because $m_k\to\infty$.
Moreover, if $N_k\le n < N_{k+1}$, then
\[
\frac{\log \# E_n}{n}=\frac{\log \# F_k}{n}
\le \frac{\log \# F_k}{N_k}
\le \frac{1}{k},
\]
which tends to $0$ as $n\to\infty$.
\end{proof}

\begin{lm}\label{lm:cyl_shrink}
Let $A_0$ be as in Lemma~\ref{lm:trapping} and set $\mathcal I:=\Ocal_d^{\ge A_0}$.
Let $\omega=(\omega_1,\omega_2,\dots)\in \mathcal I^\N$ and put $y:=\pi(\omega)\in I_d$.
If $D\subset I_d$ is an open neighborhood of $y$, then there exists $\ell\ge 1$ such that
\[
h_{\omega_1}\circ\cdots\circ h_{\omega_\ell}(I_d)\subset D.
\]
\end{lm}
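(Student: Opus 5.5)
The argument is a nested-compact-sets argument driven by the uniform contraction of Lemma~\ref{lm:trapping}(v); it repeats the step already used inside the proof of Theorem~\ref{thm:HirstMain}(ii), now isolated as a lemma.

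\emph{Nesting.} For $\ell\ge1$ set $Y_\ell:=h_{\omega_1}\circ\cdots\circ h_{\omega_\ell}(I_d)$. Since $h_{\omega_{\ell+1}}(I_d)\subset I_d$ by Lemma~\ref{lm:trapping}(i), we have $Y_{\ell+1}\subset Y_\ell$. By definition of the coding map (based at any $\zeta\in I_d$), we get $y=\lim_n h_{\omega_1}\circ\cdots\circ h_{\omega_n}(\zeta)$. For $n\ge\ell$ each point $h_{\omega_1}\circ\cdots\circ h_{\omega_n}(\zeta)$ lies in $Y_\ell$, and $Y_\ell$ is compact as a continuous image of the compact set $I_d$. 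Hence $y\in Y_\ell$ for every $\ell$.

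\emph{Diameters.} I will show $\diam(Y_\ell)\to0$. Every map $h_\alpha$ with $\alpha\in\mathcal I$ satisfies $\sup_{I_d}\abs{Dh_\alpha}\le\gamma<1$ by Lemma~\ref{lm:trapping}(v). Since $I_d$ is convex (Lemma~\ref{lm:Id_convex}), the mean value inequality along segments makes $h_\alpha$ $\gamma$-Lipschitz on $I_d$. The composition $h_{\omega_1}\circ\cdots\circ h_{\omega_\ell}$ maps $I_d$ into $I_d$ at each stage, so it is $\gamma^\ell$-Lipschitz on $I_d$. This gives $\diam(Y_\ell)\le\gamma^\ell\diam(I_d)\to0$. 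Alternatively, one could use the diameter bound of Lemma~\ref{lm:2decay}(ii) together with $\abs{\alpha_k}-R_d\ge A_0-R_d>1$.

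\emph{Conclusion.} $D$ is open in $I_d$ and contains $y$, so there is $\varepsilon>0$ with $B(y,\varepsilon)\cap I_d\subset D$. Choose $\ell$ with $\diam(Y_\ell)<\varepsilon$. Because $y\in Y_\ell\subset I_d$, every point of $Y_\ell$ is within distance $\diam(Y_\ell)<\varepsilon$ of $y$, so $Y_\ell\subset B(y,\varepsilon)\cap I_d\subset D$.

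The only point needing care is the Lipschitz bound, which requires convexity of $I_d$ so that segments stay in the domain where the derivative bound holds. This is supplied by Lemma~\ref{lm:Id_convex}, so no real obstacle is expected.
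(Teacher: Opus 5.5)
Your proof is correct and follows the paper's argument: $y$ lies in every nested image $h_{\omega_1}\circ\cdots\circ h_{\omega_\ell}(I_d)$, these diameters tend to $0$, and so eventually the image sits inside a ball around $y$ contained in $D$. The paper obtains the diameter decay from Lemma~\ref{lm:2decay}(ii), which you list as your alternative; your primary route instead combines the one-step bound $\gamma$ of Lemma~\ref{lm:trapping}(v) with convexity of $I_d$, as in the proof of Theorem~\ref{thm:HirstMain}(ii), and you also spell out $y\in Y_\ell$ and the case where $D$ is only relatively open, both of which the paper leaves implicit.
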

\begin{proof}
Choose $\rho>0$ such that $B(y,\rho)\subset D$.
Since $y\in h_{\omega_1}\circ\cdots\circ h_{\omega_\ell}(I_d)$ for every $\ell\ge1$ and
\[
\diam\bigl(h_{\omega_1}\circ\cdots\circ h_{\omega_\ell}(I_d)\bigr)\to 0
\qquad(\ell\to\infty)
\]
by Lemma~\ref{lm:2decay}(ii), there exists $\ell$ such that
\[
\diam\bigl(h_{\omega_1}\circ\cdots\circ h_{\omega_\ell}(I_d)\bigr)<\rho.
\]
Hence
\[
h_{\omega_1}\circ\cdots\circ h_{\omega_\ell}(I_d)\subset B(y,\rho)\subset D.
\]
\end{proof}

\begin{proof}[Proof of Theorem~\ref{thm:pattern_intro}]
Set
\[
E:=F_d(S,f;\mathbf{b}^{(\cdot)},N_\cdot)\subset F_d(S,f).
\]
If $E=\varnothing$, there is nothing to prove.
By Theorem~\ref{thm:HirstMain},
\[
\dim_H(E)\le \dim_H(F_d(S,f))=\frac{\tau(S)}{2}.
\]
If $\tau(S)=0$, this already gives $\dim_H(E)=0$.
Thus we may assume $\tau(S)>0$.
Fix $t$ with $0<t<\tau(S)$ and choose $\varepsilon>0$ so that
\[
s:=t+\varepsilon<\tau(S),
\qquad
c_0:=\frac{\varepsilon\log 2}{8}>0.
\]
Let $A_0$ be as in Lemma~\ref{lm:trapping} and set $S':=S\cap \Ocal_d^{\ge A_0}$.
By Lemma~\ref{lm:tau_basic}(i), $\tau(S')=\tau(S)$.
Apply Lemma~\ref{lm:tail_select_cutoff} to $S'$ and $f$.
Choose $x_*\in E$.
After enlarging $n_0$ if necessary, we may still assume
\[
a_n(x_*)\in S',
\qquad
R_n\ge 2
\qquad(n\ge n_0).
\]
Since $b_j^{(r)}\in S$ for all $r\ge 1$ and $1\le j\le m$, while
\[
\min_{1\le j\le m}\abs{b_j^{(r)}}\to\infty,
\]
the set
\[
\mathcal R:=\set{r\ge 1:\ b_j^{(r)}\in S' \text{ for all }1\le j\le m}
\]
is cofinite.
\smallskip\noindent\emph{Reference orbit.}
Set
\[
y_*:=T_d^{n_0-1}(x_*),
\qquad
\mathbf c:=(a_1(x_*),\dots,a_{n_0-1}(x_*)).
\]
Since
\[
a_1(y_*)=a_{n_0}(x_*)\in S'\subset \Ocal_d^{\ge A_0},
\]
Lemma~\ref{lm:trapping}(i) gives $y_*\in B(0,r_d)\subset I_d^\circ$.
Lemma~\ref{lm:local_prefix} applied to $x_*$ and $n_0-1$ yields an open neighborhood $D\subset I_d^\circ$ of $y_*$ such that
\[
h_{\mathbf c}(D)\subset \ip{\mathbf c}
\qquad\text{and}\qquad
T_d^{n_0-1}\bigl(h_{\mathbf c}(y)\bigr)=y
\quad (y\in D).
\]
Choose an open set $U_*$ such that
\[
y_*\in U_*\Subset D\cap B(0,r_d).
\]
Write
\[
\omega^*:=(a_1(y_*),a_2(y_*),\dots)\in (S')^\N\subset (\Ocal_d^{\ge A_0})^\N.
\]
By Lemma~\ref{lm:fullshift}, $y_*=\pi(\omega^*)$.
Lemma~\ref{lm:cyl_shrink} yields $\ell\ge 1$ such that
\begin{equation}\label{eq:anchor-window}
Y_\ell:=h_{a_1(y_*)}\circ\cdots\circ h_{a_\ell(y_*)}(I_d)\subset U_*\Subset D\cap B(0,r_d).
\end{equation}
\smallskip\noindent\emph{Sparse subsystem.}
For $r\in\mathcal R$, set
\[
\widetilde B_r:=\prod_{j=1}^m (\abs{b_j^{(r)}}+R_d).
\]
Then
\[
0\le \log \widetilde B_r-\log\Bigl(\prod_{j=1}^m\abs{b_j^{(r)}}\Bigr)
=\sum_{j=1}^m \log\Bigl(1+\frac{R_d}{\abs{b_j^{(r)}}}\Bigr)\to 0,
\]
so the hypothesis on the prescribed blocks gives
\[
\liminf_{r\to\infty}\frac{\log \widetilde B_r}{N_r}=0.
\]
Since $\mathcal R$ is cofinite, $N_r\to\infty$, and $\liminf_{r\to\infty}(\log \widetilde B_r)/N_r=0$, we may choose recursively a strictly increasing sequence $(r_k)_{k\ge 1}$ in $\mathcal R$ such that
\begin{align}
N_{r_k}&\ge n_0+\ell, \label{eq:rk-basic}\\
N_{r_k}&\ge N_{r_{k-1}}+m \qquad (k\ge 2), \label{eq:rk-disjoint}\\
t\log \widetilde B_{r_k}&\le c_0 N_{r_k}, \label{eq:rk-smallratio}\\
N_{r_k}&\ge c_0^{-1}\Bigl(t\sum_{i=1}^{k-1}\log \widetilde B_{r_i}+8c_0mk+1\Bigr). \label{eq:rk-dominate}
\end{align}
This is possible because the first three conditions involve only finitely many previously chosen terms, while the last one asks that $N_{r_k}$ dominate a fixed finite quantity.
From \eqref{eq:rk-smallratio} and \eqref{eq:rk-dominate},
\begin{equation}\label{eq:cumulative-loss-bound}
t\sum_{i=1}^{k}\log \widetilde B_{r_i}+8c_0mk
\le 2c_0 N_{r_k}
\qquad(k\ge 1).
\end{equation}
Set
\[
\widehat N_k:=N_{r_k}-(n_0-1).
\]
By \eqref{eq:rk-basic} and \eqref{eq:rk-disjoint},
\[
\widehat N_1\ge \ell+1,
\qquad
\widehat N_{k+1}\ge \widehat N_k+m,
\]
so the shifted prescribed blocks are disjoint.
For $n\ge 1$, define
\[
I_n:=
\begin{cases}
\{a_{n_0+n-1}(x_*)\}, & 1\le n\le \ell,\\
\{b_{n-\widehat N_k+1}^{(r_k)}\}, & \widehat N_k\le n\le \widehat N_k+m-1 \text{ for some }k,\\
E_{n_0+n-1}, & \text{otherwise.}
\end{cases}
\]
Let
\[
\Omega:=\prod_{n\ge 1}I_n,
\qquad
K_0:=\pi(\Omega).
\]
Since every level set $I_n$ is nonempty, the product set $\Omega$ is nonempty.
The first $\ell$ levels are fixed to the initial tail of $x_*$, the levels $\widehat N_k,\dots,\widehat N_k+m-1$ are fixed to the prescribed block $\mathbf b^{(r_k)}$, and all remaining levels vary inside the finite sets $E_{n_0+n-1}$.
By \eqref{eq:anchor-window},
\[
K_0\subset Y_\ell\subset D\cap B(0,r_d).
\]
Fix $y\in K_0$ and choose $\omega\in\Omega$ with $\pi(\omega)=y$.
By Lemma~\ref{lm:fullshift},
\[
a_n(y)=\omega_n\qquad(n\ge 1).
\]
Set $x:=h_{\mathbf c}(y)$.
Since $y\in D$, we have
\[
x\in \ip{\mathbf c},
\qquad
T_d^{n_0-1}(x)=y.
\]
Hence the first $n_0-1$ digits of $x$ are $\mathbf c$, and for every $n\ge 1$,
\[
a_{n_0+n-1}(x)=a_n(y)=\omega_n.
\]
Since $\omega\in\Omega$, we obtain
\[
(a_{N_{r_k}}(x),\dots,a_{N_{r_k}+m-1}(x))=\mathbf b^{(r_k)}
\qquad(k\ge 1).
\]
All digits of $x$ lie in $S$.
On the finite prefix and on the initial reference block this follows from $x_*\in E$; at the variable levels it follows from $E_n\subset S'$; and at the prescribed levels it is part of the hypotheses on the prescribed blocks.
The same decomposition gives the cutoff bounds: the finite prefix and the initial reference block inherit them from $x_*\in E$, the variable levels use $\abs{\alpha}\le f(n)$ for $\alpha\in E_n$, and the prescribed levels use the assumed bounds $\abs{b_j^{(r)}}\le f(N_r+j-1)$.
If $j$ is a variable level, then
\[
a_{n_0+j-1}(x)=\omega_j\in E_{n_0+j-1},
\qquad
\abs{a_{n_0+j-1}(x)}\ge R_{n_0+j-1}.
\]
On the $k$-th prescribed block,
\[
\abs{a_n(x)}\ge \min_{1\le j\le m}\abs{b_j^{(r_k)}}.
\]
Since $R_n\to\infty$ and $\min_{1\le j\le m}\abs{b_j^{(r_k)}}\to\infty$, we obtain
\[
\abs{a_n(x)}\to\infty.
\]
Therefore
\[
h_{\mathbf c}(K_0)\subset E.
\]
By Lemma~\ref{lm:prefix_conformal}, $h_{\mathbf c}$ is bi-Lipschitz on $\overline{B(0,r_d)}$.
Since $K_0\subset \overline{B(0,r_d)}$, it remains to prove
\[
\dim_H(K_0)\ge \frac{t}{2}.
\]
\smallskip\noindent\emph{Pressure.}
For each $n\ge 1$, set
\[
\Psi_n:=\{h_\alpha:\ \alpha\in I_n\}.
\]
Every level map belongs to the autonomous family $\Phi_d^{\ge A_0}$.
Hence Lemma~\ref{lm:trapping} gives the open set condition at each level on $\mathrm{int}(\Delta_d)$, a common extension domain $\widetilde\Delta_d$, and a one-step contraction bound
\[
\sup_{z\in \Delta_d}\abs{Dh_\alpha(z)}\le \gamma<1
\qquad(\alpha\in I_n,\ n\ge 1),
\]
while Lemma~\ref{lm:NT25_geom} yields the required bounded distortion uniformly for all compositions across levels.
Therefore $\Psi:=(\Psi_n)_{n\ge 1}$ is a non-autonomous conformal IFS on $\Delta_d=I_d$ as defined above.
Its limit set is exactly
\[
\Lambda(\Psi)=\pi(\Omega)=K_0,
\]
because the non-autonomous coding map agrees with the restriction of $\pi$ to $\Omega$.
Moreover, $\# I_n=1$ at fixed or prescribed levels, while $\# I_n=\# E_{n_0+n-1}$ at variable levels.
Hence
\[
0\le \frac{1}{n}\log \# I_n\le \frac{1}{n}\log \# E_{n_0+n-1}
=\frac{n_0+n-1}{n}\cdot \frac{\log \# E_{n_0+n-1}}{n_0+n-1}\longrightarrow 0
\]
by Lemma~\ref{lm:tail_select_cutoff}.
Set
\[
q:=\frac{t}{2},
\qquad
\Sigma_j:=\sum_{\alpha\in I_j}(\abs{\alpha}+R_d)^{-t}.
\]
For a word $(\alpha_1,\dots,\alpha_n)\in I_1\times\cdots\times I_n$, the chain rule and
\[
\abs{Dh_\alpha(z)}=\frac{1}{\abs{z+\alpha}^{2}}\ge \frac{1}{(\abs{\alpha}+R_d)^2}
\qquad(z\in I_d)
\]
give
\[
\bigl\|D(h_{\alpha_1}\circ\cdots\circ h_{\alpha_n})\bigr\|_\infty^{q}
\ge \prod_{j=1}^{n}(\abs{\alpha_j}+R_d)^{-t}.
\]
Hence
\[
Z_n(q)\ge \prod_{j=1}^{n}\Sigma_j.
\]
At a variable level $j$,
\[
\Sigma_j
=\sum_{\alpha\in E_{n_0+j-1}}(\abs{\alpha}+R_d)^{-t}
=\sum_{\alpha\in E_{n_0+j-1}}(\abs{\alpha}+R_d)^{\varepsilon}(\abs{\alpha}+R_d)^{-s}
\ge R_{n_0+j-1}^{\varepsilon}\sum_{\alpha\in E_{n_0+j-1}}(\abs{\alpha}+R_d)^{-s}
\ge 2^{\varepsilon}
=e^{8c_0}.
\]
Set
\[
C_{\mathrm{init}}:=\prod_{j=1}^{\ell}(\abs{a_{n_0+j-1}(x_*)}+R_d)^{-t}>0.
\]
For the prescribed block attached to $r_k$,
\[
\prod_{u=0}^{m-1}\Sigma_{\widehat N_k+u}
=\prod_{u=1}^{m}(\abs{b_u^{(r_k)}}+R_d)^{-t}
=\widetilde B_{r_k}^{-t}.
\]
If
\[
k(n):=\#\{i\ge 1:\ \widehat N_i\le n\},
\]
then among the first $n$ levels at most $\ell+mk(n)$ positions are fixed, and the total contribution of the prescribed blocks is bounded by $t\sum_{i=1}^{k(n)}\log \widetilde B_{r_i}$.
Therefore
\[
\log Z_n(q)\ge \log C_{\mathrm{init}}+8c_0\bigl(n-\ell-mk(n)\bigr)-t\sum_{i=1}^{k(n)}\log \widetilde B_{r_i}.
\]
If $k(n)=0$, then
\[
\log Z_n(q)\ge \log C_{\mathrm{init}}+8c_0(n-\ell).
\]
If $k(n)\ge 1$, then \eqref{eq:cumulative-loss-bound} and $\widehat N_{k(n)}\le n$ give
\[
t\sum_{i=1}^{k(n)}\log \widetilde B_{r_i}+8c_0mk(n)
\le 2c_0N_{r_{k(n)}}
\le 2c_0(n+n_0-1),
\]
and hence
\[
\log Z_n(q)\ge 6c_0n-O(1).
\]
In either case,
\[
P_\Psi(q)=\liminf_{n\to\infty}\frac{1}{n}\log Z_n(q)\ge 6c_0>0.
\]
Theorem~\ref{thm:nonaut_bowen} yields
\[
\dim_H(K_0)\ge q=\frac{t}{2}.
\]
Consequently,
\[
\dim_H(E)\ge \dim_H h_{\mathbf c}(K_0)\ge \frac{t}{2}.
\]
Letting $t\uparrow \tau(S)$ and combining with the upper bound gives
\[
\dim_H(E)=\frac{\tau(S)}{2}.
\]
This proves Theorem~\ref{thm:pattern_intro}.
\end{proof}

\section*{Acknowledgement}
We would like to express our gratitude to Professor Seonhee Lim for her guidance throughout this project.

\bibliography{ref}
\bibliographystyle{plainurl}

\end{document}